%
%

\documentclass[12pt]{amsart}
\pdfoutput=1
\usepackage{amsmath,amsthm,amsfonts,amssymb,times}
\usepackage{latexsym,epsfig,subfigure,afterpage}
\usepackage{hhline}
\usepackage{array}
\usepackage{tabularx}
\usepackage{longtable}
\usepackage{graphics, graphicx}

\DeclareMathAlphabet{\curly}{U}{rsfs}{m}{n}

\textheight=8.5in
\textwidth=6.5in
\oddsidemargin=0pt
\evensidemargin=0pt
\voffset=-0.3in

\theoremstyle{remark}
\newtheorem{remark}{Remark}
\theoremstyle{plain}

\newtheorem{conj}{Conjecture}

\newtheorem{lem}{Lemma}[section]
\newtheorem{thm}{Theorem}
\newtheorem{cor}{Corollary}
\numberwithin{equation}{section}


\newcommand{\ZZ}{{\mathbb Z}}

\newcommand{\NN}{{\mathbb N}}
\renewcommand{\SS}{\mathfrak{S}}  
\newcommand{\PP}{\mathcal{P}}   

\newcommand{\PPP}{\mathbf P}
\newcommand{\EEE}{\mathbf E}

\newcommand{\li}{{\rm li}}

\newcommand{\be}{\begin{equation}}
\newcommand{\ee}{\end{equation}}
\newcommand{\benn}{\begin{equation*}}
\newcommand{\eenn}{\end{equation*}}
\newcommand{\bal}{\begin{align*}}
\newcommand{\ea}{\end{align*}}
\newcommand{\eal}{\ensuremath{\end{align*}}}
\newcommand{\bea}{\begin{eqnarray}}
\newcommand{\eea}{\end{eqnarray}}

\renewcommand{\th}{\ensuremath{\theta}}

\newcommand{\lam}{\ensuremath{\lambda}}

\renewcommand{\a}{\ensuremath{\alpha}}

\renewcommand{\b}{\ensuremath{\beta}}
\newcommand{\del}{\ensuremath{\delta}}
\newcommand{\eps}{\ensuremath{\varepsilon}}
\renewcommand{\(}{\left(}
\renewcommand{\)}{\right)}
\newcommand{\ds}{\displaystyle}
\newcommand{\pfrac}[2]{\left(\frac{#1}{#2}\right)}

\newcommand{\deq}{\overset{d}=}  
\newcommand{\fl}[1]{{\ensuremath{\left\lfloor {#1} \right\rfloor}}}

\newcommand{\xx}{\ensuremath{\mathbf{x}}}

\newcommand{\zz}{\ensuremath{\mathbf{z}}}
\newcommand{\vv}{\ensuremath{\mathbf{v}}}
\newcommand{\mm}{\ensuremath{\mathbf{m}}}

\newcommand{\er}{{\rm e}}  
\newcommand{\order}{\asymp}  

\newcommand{\BB}{\mathcal{B}}
\newcommand{\II}{\mathcal{I}}

\renewcommand{\le}{\leqslant}

\renewcommand{\ge}{\geqslant}

\newcommand{\ms}{\medskip}

\begin{document}

\title{Prime chains and Pratt trees}

\author{Kevin Ford, Sergei V. Konyagin and Florian Luca}
\address{ KF: Department of Mathematics,
University of Illinois at Urbana-Champaign Urbana,
1409 West Green St., Urbana, IL 61801, USA}
\email{ford@math.uiuc.edu}
\address{ SVK: Department of Mechanics and Mathematics,
Moscow State University, Moscow, 119992, Russia}
\email{konyagin@ok.ru}
\address{FL : Instituto de Matem{\'a}ticas,
 Universidad Nacional Autonoma de M{\'e}xico, Ap. Postal 61-3 (Xangari),
C.P. 58089, Morelia, Michoac{\'a}n, M{\'e}xico}
\email{fluca@matmor.unam.mx }

\date{\today}
\subjclass[2000]{Primary 11N05, 11N36; Secondary 60J80}
\begin{abstract}
Prime chains are sequences
$p_1,\ldots,p_k$ of primes for which $p_{j+1}\equiv 1\pmod{p_j}$ for
each $j$.  
We introduce three new methods for counting long prime chains.
The first is used to show that $N(x;p)=O_\eps(x^{1+\eps})$, where
$N(x;p)$ is the number of
chains with $p_1=p$ and $p_k \le px$.  The second method is used to show that
the number of prime chains ending at $p$ is $\order \log p$ for most $p$.
The third method produces the first
nontrivial upper bounds on $H(p)$, the length of the longest chain
with $p_k=p$, valid for almost all $p$.  As a consequence, we also settle a conjecture of Erd\H os, Granville, Pomerance and Spiro from 1990.
A probabilistic model of $H(p)$, based on the theory of branching
random walks, is introduced and analyzed.  The model suggests that
for most $p\le x$, $H(p)$ stays very close to $\er \log\log x$.
\end{abstract}

\thanks{The research of K.~F. was supported in part
by NSF grants DMS-0555367 and DMS-0901339,  
that of S.~K. was
supported in part by Grant 08-01-00208 from the Russian Foundation
for Basic Research, and that of F.~L. was supported in part
by projects PAPIIT 100508 and SEP-CONACyT 79685.
Some of this work was accomplished
while the third author visited the University of Illinois at
Urbana-Champaign in February 2007, supported by NSF grant DMS-0555367.
The first and second authors enjoyed the hospitality of
the Institute for Advanced Study, where some of this work was
done in November 2007 and spring 2010, the first author being supported
by The Ellentuck Fund and The Friends of the Institute for Advanced Study.
The first author was also supported by the NSF sponsored
Workshop in Linear Analysis and
Probability at Texas A\&M University, August 2008.
}

\maketitle


\section{Introduction}


\subsection{} For positive integers $a$ and $b$, write $a
\prec b$ if $b\equiv 1 \!\!\pmod{a}$.  We are
interested in properties of \emph{prime chains} $p_1 \prec p_2 \prec \cdots
\prec p_k$, e.g. $3 \prec 7 \prec 29 \prec 59$.  
Prime chains are multiplicative analogs of the well-studied
additive prime $k$-tuples (sequences $p_1<\cdots<p_k$ of primes with $p_k-p_1$
small).
Important quantities of study
are $N(x)$, the number of prime chains with $p_k\le x$
($k$ variable), $N(x;p)$, the number of prime chains with $p_1=p$ and 
$p_k/p_1 \le x$, $f(p)$, the number of prime chains with $p_k=p$,
 and $H(p)$, the
length of the longest prime chain with $p_k=p$.  Estimates for these quantities
have arisen  in investigations of iterates of Euler's totient function
$\phi(n)$ and Carmichael's function $\lambda(n)$ (e.g.
\cite{BFLPS}, \cite{BS}, \cite{BKW}, \cite{EGPS}, \cite{Lam}, 
\cite{LP}, \cite{MP}), the
value distribution of $\lambda(n)$ \cite{FL},
common values of $\phi(n)$ and the sum-of-divisors function $\sigma(n)$
\cite{phisig}, and the complexity of
primality certificates (\cite{Bay}, \cite{Pr}).

In studying long chains, where the ratios $\log p_{j+1}/\log p_j$ are small
on average,
we require information about the large prime factors of shifted primes $p-1$.
That is, we require good estimates for $\pi(x;q,1)$ when $q$ is large,
where  $\pi(x;q,a)=|\{ p\le x :  p\equiv a\!\!\pmod{q} \}|$.
Progress, however, is hampered by our poor knowledge when $q>\sqrt{x}$.
Let  $\li(x) = \int_2^x dt/\log t$.
The Bombieri-Vinogradov theorem (\cite{Da}, Ch. 28) implies that
\be\label{BVQ} \sum_{m\le Q}
\max_{y\le x}  \left| \pi(y;m,1)- \frac{\li(y)}{\phi(m)}
\right| \ll R,
\ee
with $Q=x^{1/2}(\log x)^{-B}$ and $R=x(\log x)^{-A}$ 
(here $A>0$ is arbitrary and
$B$ depends on $A$).  The corresponding statement with $Q=x^{\th}$ is
not known for any fixed $\th>1/2$, however it is conjectured 
(the Elliott-Halberstam conjecture; abbreviated EH) that 
\eqref{BVQ} holds with $Q=x^\th$ and $R=x(\log x)^{-A}$ for any $\th<1$ and $A>0$.
The one-sided Brun-Titchmarsh inequality 
\be\label{BT}
\pi(x;q,1) \le \frac{2x}{(q-1)\log(x/q)},
\ee
however, is useful in some situations.

If one asks just for the existence of many shifted primes $p-1$ with a large
prime factor, we can do a little bit better than Bombieri-Vinogradov.
Let $P^+(n)$ denote the largest prime factor of $n$, and
let $\theta_0$ be the supremum of real numbers $\th$ so
 that there are infinitely many primes $p\le x$ such that $P^+(p-1) \ge x^\th$.
EH implies $\th_0=1$, and the best unconditional result is due to Baker and 
Harman \cite{BH}, who showed that $\theta_0 \ge 0.677$. 

In this paper, we prove new bounds for $N(x;p)$, $N(x)$, $f(p)$ and $H(p)$.
At the core of our arguments is a kind of duality principle: in a chain
$p_1 \prec \cdots \prec p_k$, there are integers $m_j$ with $p_{j+1}=m_jp_j+1$,
and there is an obvious bijection between the $k$-tuples $(p_1,\ldots,p_k)$ and
$(p_1,m_1,\ldots,m_{k-1})$.  It is often more efficient to focus on properties 
of the latter vector rather than the former.


\subsection{}
We begin with the problem of bounding $N(x;p)$.
By iterating \eqref{BT},
one arrives at a uniform bound (e.g. \cite{EGPS}, Theorem 3.5)
\be\label{EGPS1}
N_k(x;p) \ll \frac{x (c\log_2 x)^{k-1}}{\log x}
\ee
for the number, $N_k(x;p)$, of prime chains of length $k$ with $p_1=p$
 and $p_k/p_1\le x$. Here  $\log_k x$ is the $k$-th iterate 
of the logarithm of $x$, and $c$ is some constant.
 Summing \eqref{EGPS1} over $k\le \frac{\log x}{\log 2}+1$,
one obtains the weak estimate $N(x;p) \ll x^{O(\log_3 x)}.$

\begin{thm}\label{Pxp}
For $p\ge 2$ and $x\ge 20$, we have the effective estimate
$$
N(x;p) \le x \exp \left\{
 \frac{\log x (\log_3 x + O(1))} {\log_2 x} \right\}.
$$
In particular, for every $\eps>0$ there is an effective constant
$C(\eps)$ so that
$N(x;p) \le C(\eps) x^{1+\eps}.$
\end{thm}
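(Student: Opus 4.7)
The plan is to combine the iterated Brun--Titchmarsh estimate \eqref{EGPS1} with a careful summation over chain lengths, exploiting the duality principle described in the introduction.

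First, I would parametrize each chain $p=p_1\prec p_2\prec\cdots\prec p_k$ by the vector of multipliers $(m_1,\ldots,m_{k-1})$, where $m_j:=(p_{j+1}-1)/p_j\ge 2$. The condition $p_k/p_1\le x$ then becomes $\prod_j m_j\le x$, which automatically caps $k\le 1+(\log x)/\log 2$. An iterated application of \eqref{BT} along the chain recovers \eqref{EGPS1}: $N_k(x;p)\ll x(c\log_2 x)^{k-1}/\log x$.

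Next, rather than summing \eqref{EGPS1} over all admissible $k$ (which yields only $N(x;p)\ll x^{1+O(\log_3 x)}$), I would split the sum at the cutoff $K_0:=\lceil(\log x)/\log_2 x\rceil$.  For $1\le k\le K_0$, the geometric-series estimate
\begin{equation*}
\sum_{k=1}^{K_0}(c\log_2 x)^{k-1}\le 2(c\log_2 x)^{K_0-1}
\end{equation*}
(valid once $c\log_2 x\ge 2$) gives
\begin{equation*}
\sum_{k=1}^{K_0}N_k(x;p)\le \frac{2Cx(c\log_2 x)^{K_0-1}}{\log x}\le x\exp\left\{\frac{\log x\,(\log_3 x+O(1))}{\log_2 x}\right\},
\end{equation*}
since $K_0\log(c\log_2 x)=(\log x/\log_2 x)(\log_3 x+O(1))$. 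This already matches the target.

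The main obstacle is the tail $\sum_{k>K_0}N_k(x;p)$, for which \eqref{EGPS1} is too weak. Here I would use the constraint $\prod m_j\le x$ more directly: for $k>K_0$ the geometric mean $(\prod m_j)^{1/(k-1)}$ is at most $x^{1/K_0}=e^{\log_2 x}=\log x$, so a positive proportion of the $m_j$ must satisfy $m_j\le\log x$. For such multipliers \eqref{BT} yields only $O(\log x/\log_2 x)$ valid values per step, a strict improvement on the generic $O(\log x/\log_3 x)$ afforded by larger multipliers. A refined dyadic decomposition of the profile $(m_1,\ldots,m_{k-1})$, with the resulting combinatorial sum over profiles controlled via the Rankin-type identity $\sum_{a\ge 1}z^a/a=-\log(1-z)$, should bound the tail by the same exponential and conclude the proof. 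The hard part is precisely this tail estimate: the constant $C$ of Brun--Titchmarsh would compound unfavorably across $k\asymp\log x$ steps unless the multiplicative budget $\prod m_j\le x$ is used to its full extent.
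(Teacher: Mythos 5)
Your treatment of chains of length $k\le K_0=\lceil \log x/\log_2 x\rceil$ is fine --- summing \eqref{EGPS1} up to that cutoff does give a bound of the stated shape --- but this is the easy half, and the tail $k>K_0$, which you correctly identify as the hard part, is not handled by the strategy you sketch, and I do not believe it can be. Once chains are parametrized by the link vector $(m_1,\ldots,m_{k-1})$ with $\prod_j m_j\le x$, the count with no arithmetic input is the number of ordered factorizations of integers up to $x$, which is $x^{\rho+o(1)}$ with $\rho\approx 1.73$; the Brun--Titchmarsh saving per step is only a factor $\asymp\log M$ when $m_j$ runs over an interval of length about $M$, and it is \emph{no saving at all} for bounded multipliers such as $m_j\in\{2,3\}$, since \eqref{BT} applied to a block of bounded length is weaker than the trivial bound. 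Concretely, in your ``profile decomposition plus Rankin'' scheme the per-step factor at Rankin exponent $s$ is at least $2^{-s}+3^{-s}+c\sum_{m\ge 4}\frac{1}{m^{s}\log m}$, and since the last sum grows like $\log\frac{1}{s-1}$ as $s\to 1^{+}$, this factor is $<1$ only when $s-1$ exceeds a fixed positive constant. Hence the best such an argument can produce is $N(x;p)\ll x^{1+c_0}$ for some absolute $c_0>0$, not $x^{1+\eps}$ for every $\eps$, and certainly not the refined bound of the theorem. Your observation that for $k>K_0$ a positive proportion of the $m_j$ are $\le\log x$ does not rescue this, because it is precisely the small multipliers for which Brun--Titchmarsh yields nothing.

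The missing idea --- the heart of the paper's proof --- is to discard primality of the chain elements altogether and keep only the sieve condition that every element is coprime to all primes $\le y$, with $y=\log x/\log_2 x$. Working modulo $r=\prod_{p\le y}p$, the admissible links $m$ from a residue $a$ to a residue $b$ lie in a single residue class mod $r$, and the transfer matrix with entries $S(a,b)=\sum_{am+1\equiv b\ (\mathrm{mod}\ r)}m^{-s}$ has maximum row sum $\frac{1}{2^{s}-1}\prod_{p>y}(1-p^{-s})^{-1}$, which is already $<1$ for $s=1+\frac{\log_2 y}{\log y}$. The gain here comes from the density of admissible residues, so it is uniform in the size of $m_j$ and applies even to $m_j=2,3$, where \eqref{BT} is useless; the geometric series over $k$ then converges, and Rankin's trick with this $s$, together with the crude factor $\phi(r)\le \er^{(1+o(1))y}$, produces exactly the exponent $\frac{\log x(\log_3 x+O(1))}{\log_2 x}$. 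Without this step, or some genuinely new substitute for it, your tail estimate cannot close.
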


Theorem \ref{Pxp} has applications to problems which, at first glance,
have nothing to do with prime chains.  First, it
is a crucial tool in the recent proof by Ford, Luca and Pomerance
\cite{phisig} that the equation $\phi(a)=\sigma(b)$ has infinitely
 many solutions,
settling a well-known 50-year old problem of Erd\H os.
In \cite{FL}, Theorem \ref{Pxp} is used to show that for some
 effective $q_0$,
if $\pi(p^{3a};p^a,1)-\pi(p^{3a};p^{a+1},1)\ge 113 
p^{\frac{7a-3}{4}}/\log (p^{a+1})$
for all prime powers $p^a\in (10^{10},q_0]$,
then for every positive integer $n$, 
there is another positive integer $m$ with
$\lambda(n)=\lambda(m)$.  This nearly settles a conjecture from \cite{BFLPS},
the analog for $\lambda$ of the famous Carmichael Conjecture for $\phi$.

Theorem \ref{Pxp} is nearly best possible, since 
 $N(x;p) \ge N_2(x;p) = \pi(px;p,1)$, which is expected to be $\gg x/(\log px)$
unless $x$ is very small relative to $p$.

\begin{conj}\label{Pxp_conj}
We have $N(x;p) \ll x$.
\end{conj}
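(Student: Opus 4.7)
The plan is to prove the conjecture under the Elliott--Halberstam hypothesis (EH) by a precise iterated prime-counting computation that produces a factorial denominator $(k-2)!$, just enough to make the sum over chain lengths converge to $O(x)$. Writing $N(x;p)=1+\sum_{k\ge 2}N_k(x;p)$, the target bound is
\[
N_k(x;p)\ll \frac{x\,(\log_2(px))^{k-2}}{(k-2)!\,\log(px)},
\]
after which $\sum_{k\ge 2}(\log_2(px))^{k-2}/(k-2)!=\exp(\log_2(px))=\log(px)$ gives $N(x;p)\ll x$, as conjectured.

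To prove the displayed bound, I would sum over the primes of the chain from the top down. First, $\pi(px;p_{k-1},1)\sim px/(p_{k-1}\log(px))$ under EH. Then by partial summation (again under EH), the sum over $p_{k-1}$ reduces to $\sum_{p_{k-2}\prec q\le px}1/q \sim (\log_2(px)-\log_2 p_{k-2})/p_{k-2}$. Iterating this $k-2$ times yields, at the $j$-th stage, the cumulative factor
\[
\frac{(\log_2(px)-\log_2 p_{k-j})^{j-1}}{(j-1)!\,p_{k-j}},
\]
the factorial $1/(j-1)!$ arising from the repeated integration $\int u^{j-2}\,du$ after the substitution $u=\log_2(px)-\log_2 y$ inside the partial-summation integral. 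Setting $j=k-1$ so that $p_{k-j}=p$, and multiplying by the leading factor $px/\log(px)$, produces the claim (with $\log_2 p$ absorbed into an $O(1)$ constant inside the final exponential sum).

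The main obstacle is keeping the iterated EH estimates valid. At stage $j$ one sums primes in an AP with modulus $p_{k-j}$, and EH requires $p_{k-j}\le(px)^{1-\eps}$; in a long chain the $p_j$ eventually violate this bound. I would cope by splitting the chain at the first $p_i$ exceeding $(px)^{1-\eps}$: the initial segment is handled by the iterated EH computation above, and the tail (which can extend by at most $O(1/\eps)$ further primes before $p_k\le px$ would be violated) is controlled by the uniform $x^{1+\eps}$ bound of Theorem~\ref{Pxp} applied at $p_i$ with parameter $(px/p_i)^{\eps}$, losing only a factor that can be absorbed after choosing $\eps$ sufficiently small. A secondary difficulty is that the iterated approximation accumulates $k-1$ error terms whose total must be controlled uniformly in $k\asymp \log_2 x$; this likely requires a mean-value strengthening of EH, of Bombieri--Friedlander--Iwaniec type, rather than pointwise EH alone.
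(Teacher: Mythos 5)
This statement is a \emph{conjecture}: the paper offers no proof of $N(x;p)\ll x$, and its strongest unconditional result in this direction is Theorem~\ref{Pxp} ($N(x;p)\ll_\eps x^{1+\eps}$); even Theorem~A only gives $N(x)\sim cx$ under a strong distributional hypothesis, which yields nothing uniform in $p$. Your main-term computation is the standard heuristic behind the conjecture (it is the same iterated calculation, with the factorial savings, that appears in bounded-depth form in \eqref{N1j}), but your argument does not constitute a proof, nor even a complete conditional proof. First, your treatment of the large moduli fails at two points. The claim that once $p_i>(px)^{1-\eps}$ the chain ``can extend by at most $O(1/\eps)$ further primes'' is false: consecutive chain primes need only satisfy $p_{j+1}\ge 2p_j+1$, i.e.\ $\log p_{j+1}-\log p_j\ge\log 2$, so the tail can have up to about $\eps\log(px)/\log 2$ further steps, a quantity growing with $x$. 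More fatally, bounding the number of tail extensions by Theorem~\ref{Pxp} applied at $p_i$ costs a factor $N(px/p_i;p_i)\ll (px/p_i)^{1+\eps'}$, which can be as large as $(px)^{\eps(1+\eps')}$, a positive power of $x$; since chains crossing the threshold $(px)^{1-\eps}$ are not at all rare (already $\pi(px;p,1)\gg x/\log x$ chains of length $2$ can end above the threshold, and under EH a proportion of all chains do), summing your initial-segment count against this worst-case tail factor produces a bound of size $x^{1+c}$ rather than $O(x)$. The genuine difficulty, which your splitting does not resolve, is precisely that for moduli beyond the EH range only Brun--Titchmarsh \eqref{BT} is available, and its loss factor $2\log(px)/\log(px/q)$ blows up as $q$ approaches $px$.

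Second, even below the threshold, your iteration applies an averaged statement \eqref{BVQ} pointwise at each of up to $\asymp\log x$ levels, with the moduli confined to the thin, history-dependent set of chain tops; the accumulated error must be controlled uniformly in the level, and the paper itself only carries out such iterations for a \emph{bounded} number of levels and under the restriction $p_{j+1}\le p_j^{\th}$ (proof of Theorem~\ref{Prattlower}(a)). You acknowledge this and appeal to an unspecified Bombieri--Friedlander--Iwaniec-type strengthening, but no such input is formulated or verified, so this step is a gap rather than a reduction. Finally, note that even if all of this were repaired, you would have proved a statement conditional on EH (and more), whereas the conjecture as stated is unconditional; it would be worth saying explicitly that your goal is the weaker conditional assertion.
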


Conjecture \ref{Pxp_conj} is easy to prove when $p$ is bounded.  Using 
$f(2)=1$ and the recursive formula
\be\label{frecursive} 
f(p) = 1+ \sum_{q|(p-1)} f(q), 
\ee
we have 
\be\label{fpupper}
 f(p) \le \frac{2\log p}{\log 2} - 1 \qquad (\text{all } p).
\ee
Summing on $p\le x$
using the prime number theorem gives $N(x)\ll x$ and hence
$N(x;p) \ll px$.  Lower bounds on $f(p)$ and $N(x)$ are more difficult,
since $f(p)$ is sometimes very small, e.g.
if $p=1+2^a3^b$ then $f(p)=4$ (it is conjectured that there are infinitely many
such primes).

\begin{thm}\label{Nx}
(i) We have $f(p)\ge 0.378 \log p$ for almost all primes $p$.  
Hence, $N(x)\gg x$.

(ii) For all $x\ge 3$ and any positive integer $h$,
$\ds | \{ p\le x : f(p)=h \}| \le \pfrac{6\log x}{h}^h.$
\end{thm}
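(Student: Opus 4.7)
My plan for Theorem~\ref{Nx}(i) is based on a probabilistic/bootstrap analysis of the Pratt tree. Using the recursion \eqref{frecursive} together with Dirichlet's theorem (which gives $q \mid p-1$ for a proportion $\approx 1/(q-1)$ of primes $p \le x$), one expects that for a ``typical'' prime $p$ the sum $\sum_{q \mid p-1} f(q)$ recovers a constant fraction of $\log(p-1)$. Specifically, if almost all primes $q \le y$ satisfy $f(q) \ge c \log q$, then for almost all primes $p \le x$ most prime divisors $q$ of $p-1$ are of this ``good'' type, so $\sum_{q \mid p-1,\, q \text{ good}} f(q) \ge c \sum_{q \mid p-1,\, q \text{ good}} \log q \approx c \log p$, yielding $f(p) \ge c \log p$. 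A second-moment computation, bounding correlations between the ``badness'' of different prime divisors of $p-1$ and handling $p-1$'s squareful part, upgrades ``almost all $q \le y$ good'' into ``almost all $p \le x$ have enough good divisors.'' The constant $0.378$ should emerge from optimizing the truncation parameter. The bound $N(x) \gg x$ then follows by summing: $N(x) = \sum_{p \le x} f(p) \ge 0.378\,(1+o(1))\sum_{p \le x} \log p \gg x$ by the prime number theorem.

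For Theorem~\ref{Nx}(ii), I would induct on $h$. The base $h=1$ is trivial since only $p=2$ has $f(p)=1$. For the inductive step with $h \ge 2$, suppose $f(p)=h$ and let $q_1 < \cdots < q_r$ be the distinct prime divisors of $p-1$ with $h_i := f(q_i)$; then by \eqref{frecursive} $\sum h_i = h-1$ with each $h_i \in [1, h-1]$. For fixed $(q_i)$ of product $Q = q_1\cdots q_r$, the primes $p \le x$ whose prime divisors of $p-1$ equal $\{q_1,\dots,q_r\}$ satisfy $p = 1 + q_1^{a_1} \cdots q_r^{a_r}$ for some $a_i \ge 1$, so their count is at most the number of tuples $(a_i) \in \ZZ_{\ge 1}^r$ with $\prod q_i^{a_i} \le x-1$; via $a_i = 1 + b_i$ and $q_i \ge 2$, this is bounded by $\binom{\lfloor \log_2(x/Q) \rfloor + r}{r}$. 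Writing $A_j(x) := |\{p \le x : f(p) = j\}|$ and summing over ordered tuples (with $1/r!$ correction for the symmetry of the set $\{q_i\}$) and ordered compositions:
\[
A_h(x) \le \sum_{r \ge 1} \frac{1}{r!} \sum_{\substack{h_1 + \cdots + h_r = h-1 \\ h_i \ge 1}} \binom{\log_2 x + r}{r} \prod_{i=1}^r A_{h_i}(x).
\]

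Applying the induction hypothesis $A_{h_i}(x) \le (6\log x/h_i)^{h_i}$, and evaluating the composition sum $\sum_{(h_i)} \prod h_i^{-h_i} = [z^{h-1}] G(z)^r$ (with $G(z) = \sum_{k \ge 1} z^k/k^k$, $G(1) < 2$), combined with Stirling's formula on $\binom{\log_2 x + r}{r}/r!$ and an optimization over $r$, should give $A_h(x) \le (6\log x/h)^h$. The hardest part will be extracting the sharp constant $6$: the crude generating-function bound $[z^{h-1}] G(z)^r \le G(1)^r$ is too weak when $r$ is comparable to $\log x$, so a saddle-point estimate for this coefficient will be required. An alternative route that may yield the constant more directly is to apply Brun--Titchmarsh \eqref{BT} iteratively bottom-up through the Pratt tree, saving a factor of $\log(x/q)^{-1}$ at each internal node rather than relying on the tuple count $(a_i)$.
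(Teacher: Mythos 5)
Both halves of your plan have genuine gaps, and they are different in kind from the paper's argument. For part (i), the bootstrap has no base case: the self-consistent step ``if $f(q)\ge c\log q$ for almost all $q$ then $f(p)\ge c(1-o(1))\log p$ for almost all $p$'' can at best propagate a constant $c$ that you already possess (and in fact degrades it, because of the squareful part of $p-1$ and the bad divisors), but nothing in the scheme produces a positive $c$ to start from. The unconditional pointwise input is only $f(p)\ge 1+\omega(p-1)\asymp\log_2 p$, far below $c\log p$, and the whole difficulty --- as the paper stresses via the examples $p=1+2^a3^b$ with $f(p)=4$ --- is to bound the number of primes with \emph{small} $f(p)$. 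That is exactly what the paper does: part (i) is not proved by a transfer argument at all, but is read off from the same counting inequality \eqref{fph} as part (ii), obtained by Rankin's trick over labelled rooted trees, with $\beta=0.37$ giving $|\{p\le x: f(p)=h\}|\le x^{0.999}$ for every $h\le 0.378\log x$. Note also that even if your part (ii) were established, it would only give $f(p)\ge c\log p$ for almost all $p$ in the range $c\log(6/c)<1$, i.e.\ $c\approx 0.35$, so the constant $0.378$ cannot ``emerge'' from (ii) plus a truncation optimization.

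For part (ii), the proposed induction does not close. Test it on the composition with all parts equal to a fixed $m$ (already $m=1$, or $m=2$ with distinctness of the $q_i$ enforced): plugging the inductive bound $(6\log x/h_i)^{h_i}$ into your recursion, that single term is of size roughly $(6\log x/m)^{h-1}\binom{\log_2x+r}{r}/r!$ with $r=(h-1)/m$, which exceeds the target $(6\log x/h)^h$ by a factor on the order of $h^{h(1-1/m)}$ when $h\asymp\log x$; no saddle-point refinement of $[z^{h-1}]G(z)^r$ repairs this, because the failure is not in the composition count but in the hypothesis itself being off by a factor $\asymp\log x$ per unit of $h_i$ for small $h_i$. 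The root cause is the step where you relax $q_1\cdots q_r\le x-1$ to $q_i\le x$: each child is then given its own budget of size $x$, whereas in reality one budget of size $x$ is shared by the entire tree. The paper keeps this constraint globally through the identity $\prod_{q\in Q(p)}\frac{q}{q-1}\,l(q-1)=p$ (together with the gain $2^{-f(p)/2}$ from the nodes labelled $2$), applies a single Rankin exponent $\beta$ to the whole product, sums over tree shapes with Borchardt's formula, and --- crucially --- the sum over each $l_j$ costs only $O(1)$ because the primes dividing $l_j$ are forced to be labels of the children of node $j$; the choices $\beta=0.37$ and $\beta=h/\log x$ then give (i) and (ii) respectively from \eqref{fph}. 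Your alternative of iterating Brun--Titchmarsh \eqref{BT} bottom-up is essentially the route to \eqref{EGPS1} and incurs a $\log_2 x$ loss per level, which is fatal for a bound of the shape $(6\log x/h)^h$ when $h\asymp\log x$.
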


In particular, part (ii) implies that
primes with $f(p)=o(\log p)$ are exceptionally rare, the counting function
being $x^{o(1)}$ as $x\to\infty$.  Also, by (i), we have
$N(x;2)=\frac12 N(x) \gg x$. 

It is likely that $f(p)$ has normal order\footnote{
for every $\eps>0$, $|f(p)-c\log p| \le \eps \log p$ for all primes 
but $o(\pi(x))$ exceptions up to $x$.} 
$c\log p$ for some $c$.  A very similar problem was considered
in Section 2 of \cite{EGPS}, namely the behavior of
$I(n)=\min\{j:\phi_j(n)=1\}$, where $\phi_j$ is the $j$-th iterate of $\phi$.
It turns out that
$F(n)=I(n)-\{^{1 \,\,\,\, n \,\, \text{odd}}_{0 \,\,\,\, n \,\, \text{even}}$ 
is completely additive, and
$F(p)=F(p-1) = \sum_{q^a \| p-1} a F(q)$ for odd primes $p$.
This is similar to \eqref{frecursive}, the only difference
being the behavior at proper prime powers, which play an insignificant
role in the arguments in \cite{EGPS}.  Summing \eqref{frecursive} over
primes $p\le x$ gives
\[
N(x) = \pi(x) + \sum_{q\le x/2} f(q) \pi(x;q,1). 
\]
Inserting this relation into the proof of Theorem 2.1 in \cite{EGPS} 
(combine
Lemma 2.4, Corollary 2.5, (2.8) and Theorem 2.1 therein), we obtain
the following.  

\newtheorem*{theoremA}{Theorem A}
\begin{theoremA}\label{EGPSthm}
If \eqref{BVQ} holds with $Q=x^{1-(\log_2 x)^{-1-\del}}$ and 
$R=x(\log x)^{-2}$
for some fixed $\del>0$,  then $N(x)\sim c x$ for some constant
$c>0$ and $f(p)$ has normal order $c \log p$.
\end{theoremA}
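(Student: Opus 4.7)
The plan is to import the argument of Erd\H{o}s, Granville, Pomerance, and Spiro \cite{EGPS} essentially verbatim, using the identity
\[
N(x) = \pi(x) + \sum_{q\le x/2} f(q)\,\pi(x;q,1),
\]
obtained by summing \eqref{frecursive} over primes $p\le x$, in place of their analogous identity for the iterated totient function $F$.

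In the first step one replaces $\pi(x;q,1)$ by $\li(x)/\phi(q)$ on the right-hand side. Split the $q$-sum at $Q = x^{1-(\log_2 x)^{-1-\delta}}$: for $q\le Q$, the hypothesis with $R = x(\log x)^{-2}$, coupled with the pointwise bound $f(q)\le 2\log q/\log 2$ from \eqref{fpupper}, gives aggregate error
\[
\Bigl|\sum_{q\le Q} f(q)\bigl(\pi(x;q,1) - \li(x)/\phi(q)\bigr)\Bigr| \ll (\log x)\cdot R = O(x/\log x).
\]
For $Q<q\le x/2$, the essential observation is that $Q^2 > x$, so each prime $p\le x$ has at most one prime divisor of $p-1$ exceeding $Q$; the tail collapses to $\sum_{p\le x,\,P^+(p-1)>Q} f(P^+(p-1))$, which Brun--Titchmarsh \eqref{BT} and the precise shape of $Q$ bound by $o(x)$ following Lemma 2.4 of \cite{EGPS}. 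In the second step one analyzes the resulting asymptotic
\[
N(x) = \pi(x) + \li(x)\sum_{q\le x/2}\frac{f(q)}{\phi(q)} + o(x);
\]
partial summation converts $\sum f(q)/\phi(q)$ into an integral involving $N(t)/(t-1)^2$, yielding a linear integral equation for $g(x) := N(x)/x$. Corollary 2.5 and display (2.8) of \cite{EGPS} treat precisely this type of equation and force $g(x)\to c$ for some positive constant $c$, where positivity is guaranteed by Theorem \ref{Nx}(i). Hence $N(x)\sim cx$.

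For the normal order conclusion, apply the same framework to the second moment $\sum_{p\le x}f(p)^2$. Squaring \eqref{frecursive} and summing over primes yields a parallel identity involving $\pi(x;\lcm(q_1,q_2),1)$; the same small-versus-large modulus split under the same hypothesis gives $\sum_{p\le x}f(p)^2 \sim c^2 x\log x$. Combined with $\sum_{p\le x}f(p)\log p\sim cx\log x$ (partial summation from step two) and $\sum_{p\le x}(\log p)^2\sim x\log x$, the variance $\sum_{p\le x}\bigl(f(p) - c\log p\bigr)^2 = o(x\log x)$ follows by cancellation; Chebyshev's inequality on primes in $[\sqrt{x},x]$ (where $\log p\ge(\log x)/2$) then delivers the normal-order statement.

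The principal obstacle is the tail estimate in the first step: unconditional Brun--Titchmarsh paired with the $O(\log q)$ weight $f(q)$ is only borderline sufficient, and the threshold $Q = x^{1-(\log_2 x)^{-1-\delta}}$ is calibrated precisely so that the density of $p$ with $P^+(p-1)>Q$ absorbs the logarithmic size of $f$. The same delicacy reappears in the second moment, where the cross-terms involve moduli $\lcm(q_1,q_2)$ potentially exceeding $Q$, requiring careful handling of the off-diagonal contribution.
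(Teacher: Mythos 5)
Your proposal takes essentially the same route as the paper, whose entire proof consists of summing \eqref{frecursive} over primes $p\le x$ to obtain $N(x)=\pi(x)+\sum_{q\le x/2}f(q)\,\pi(x;q,1)$ and then inserting this identity into the proof of Theorem 2.1 of \cite{EGPS} (their Lemma 2.4, Corollary 2.5 and (2.8)) — precisely the reduction you describe. One caveat: your stated mechanism for the range $q>Q$, namely Brun--Titchmarsh \eqref{BT} together with the pointwise bound $f(q)\ll\log q$, gives only $O(x\log_2 x)$ rather than $o(x)$ (the sum $\sum_{Q<q\le x/2}\frac{\log q}{q\log(x/q)}$ is of size $\log_2 x$); the tail must instead be handled, as in the EGPS lemma you cite, by writing $p=mq+1$ and sieving on the simultaneous primality of $q$ and $mq+1$ for the small cofactor $m\le x^{(\log_2 x)^{-1-\delta}}$, which yields the needed $o(x)$ — but since you defer to Lemma 2.4 for this step anyway, the overall argument matches the paper's.
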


Conjecture \ref{Pxp_conj} implies that for all $\eps>0$
and prime $q>(\log x)^{1+\eps}$, for most $p\le x$ there is no
prime chain $q \prec \cdots \prec p$.
 This gives, conditionally, the first part of \cite[Conjecture 1]{EGPS}.
By contrast,
the proof of Theorem 4.5 of \cite{EGPS} implies that if $q\le (\log x)^c$,
for some small constant $c>0$, then  for
almost all primes $p\le x$, there is a prime chain $q\prec \cdots \prec p$.


\subsection{}
The {\it Pratt tree} $T(p)$ for a prime $p$ is the tree with root node $p$,
below $p$ are nodes labelled with the prime factors $q$ of $p-1$, below
each $q$ are nodes labelled with the prime factors of $q-1$, and so on.
In 1975, V. Pratt  \cite{Pr} used
it in conjunction with Lucas' primality test (\cite{CP}, \S 4.1)
to show that every prime has a short certificate (proof of primality).
Pomerance \cite{Pom87} gave another method for producing primality
certificates, but it is an open
problem whether the Pratt certificate has longer complexity for most primes
(see \S 1 of \cite{Pom87}).
Two important statistics of the Pratt tree are the total number of nodes
$f(p)$ and the height $H(p)$, the latter being the length of the longest
prime chain ending at $p$.
It is known (see \cite{BKW}, \cite{Er35}, \cite{EP}) that the number
of primes at a \emph{fixed} level $n$ in the Pratt tree for most $p$ is 
$\sim (\log_2 p)^{n}/n!$.  The idea is that for most
primes $p$, $p-1$ has a multiplicative structure similar to that of
a typical integer of its size; namely, $p-1$ has
about $\log_2 p$ prime factors, uniformly
distributed on a $\log\log$-scale (see \cite{HaTe}, Ch. 1).
This, however, does not give much information about $H(p)$.

\begin{figure}[t]
\vskip -0.2in
\begin{tabular}{cc}
\includegraphics[height=2.5in]{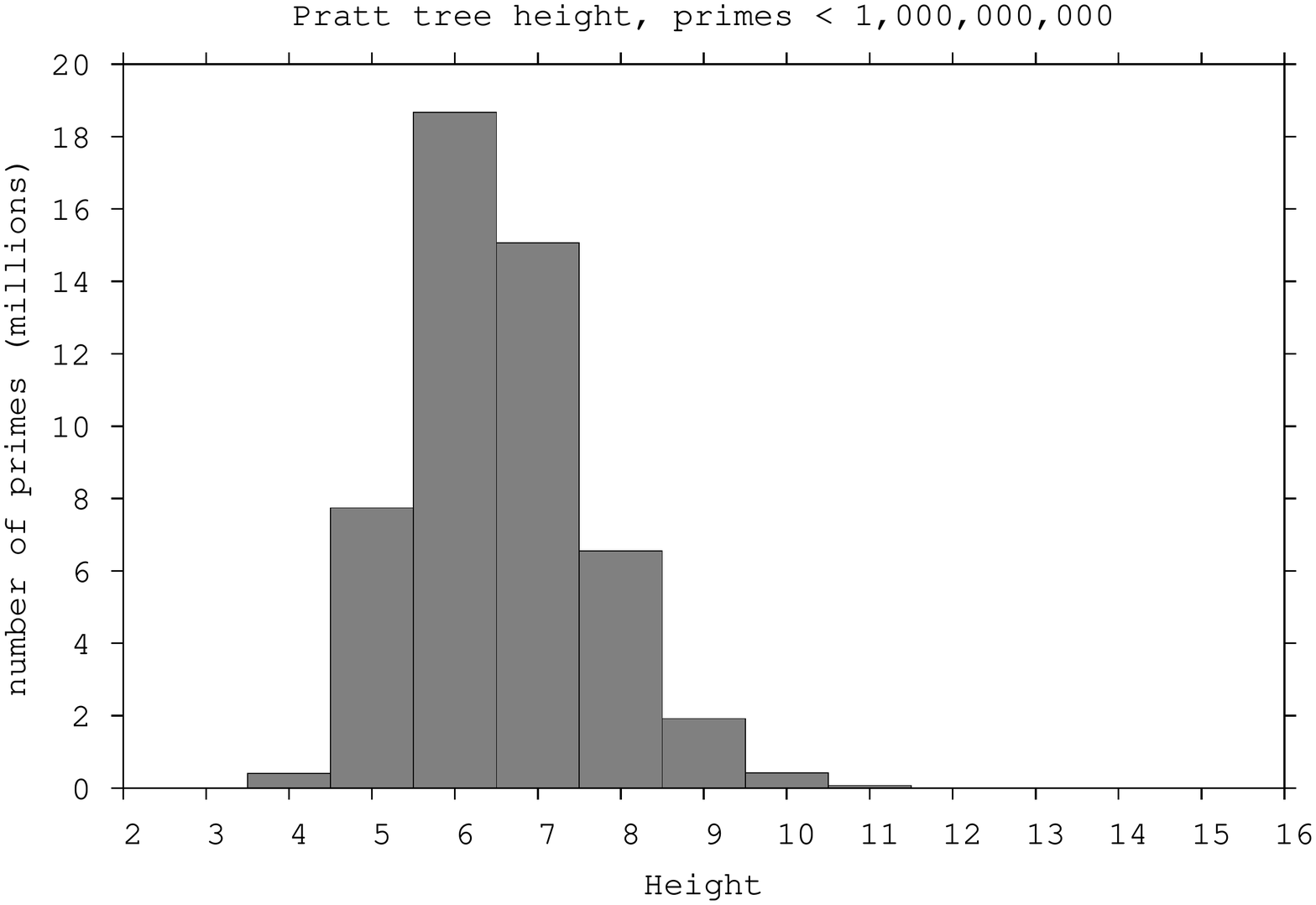} &
\includegraphics[height=2.5in]{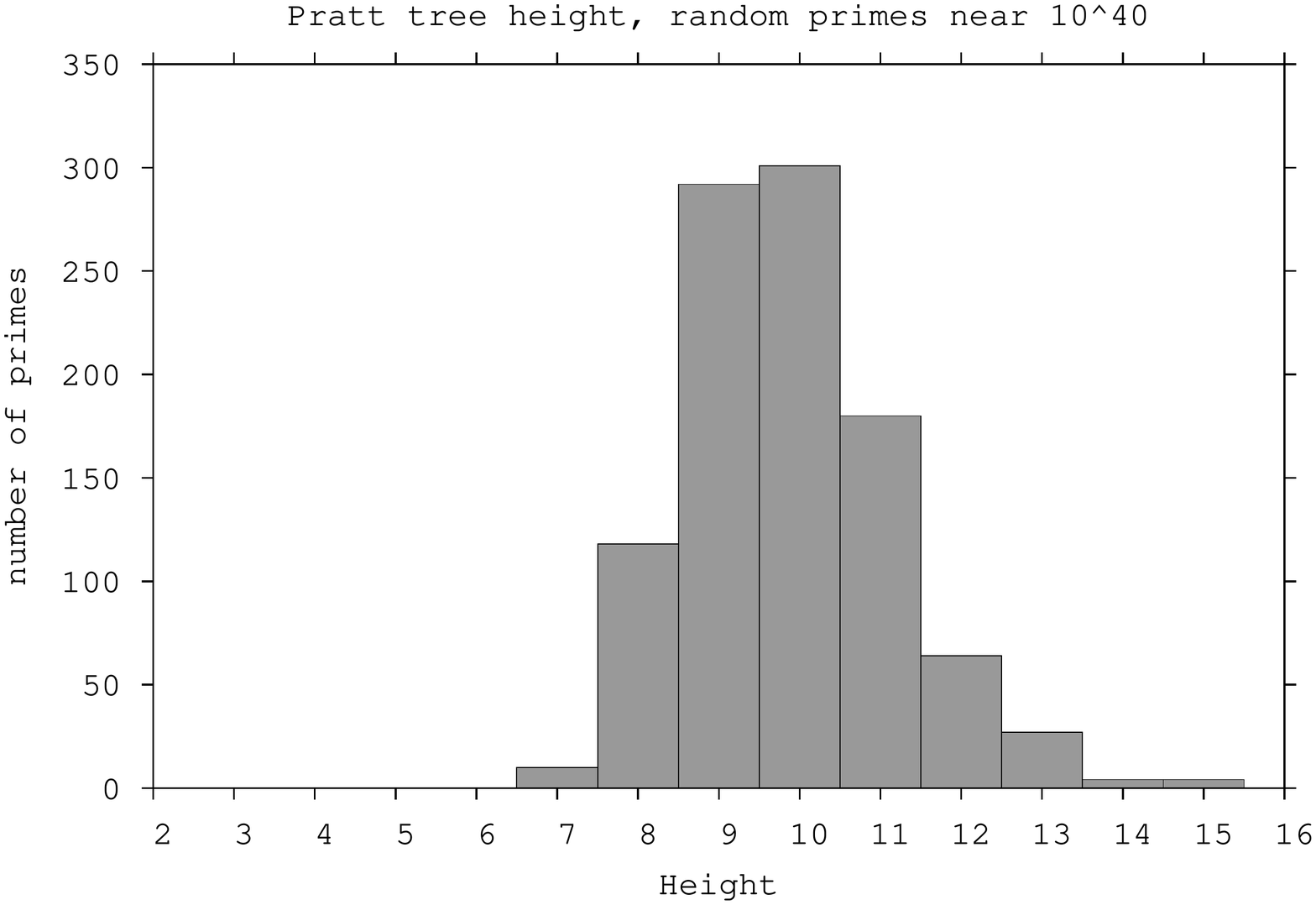} 
\end{tabular}
\vskip -0.3in
\caption{Pratt tree height}\label{figure2}
\end{figure}

Figure 1 shows histograms of $H(p)$ for all primes
$p\le 10^9$ and for 1000 randomly chosen primes near $10^{40}$.
Very little is known about the distribution of $H(p)$,
the extremal behavior being a case in point.
First, $H(p)=2$ if and only if $p$ is a Fermat prime, that is,
$p=2^{2^m}+1$ for some $m$.  It seems plausible that $H(p)=3$ for
infinitely many $p$, but this is hopeless to prove at this time.
At the other extreme, we have the trivial upper bound $H(p)\le \frac{\log p}
{\log 2}+1$.  Large values of $H(p)$ may be obtained using
the special chain $2=q_1 \prec q_2 \prec \cdots$,
where, for each $j$, $q_{j+1}$ is the smallest prime $\equiv 1\pmod{q_j}$.
By Linnik's theorem, $q_{j+1}\le q_j^L$ for some constant $L$, hence
$H(q_j) \ge \frac{\log\log q_j}{\log L}$.  It is conjectured
that $q_{j+1} \le q_j (\log q_j)^C$ for some fixed $C$ and this implies
a far stronger bound $H(q_j) \gg \frac{\log q_j}{\log\log q_j}$. 
Even showing $H(p)/\log_2 p \to \infty$ for an infinite sequence of $p$
is extremely hard, as it implies $\th_0=1$:
a prime chain $p_1 \prec \cdots \prec p_k=p$ with $k=H(p)$ satisfies
\[
 \frac{\log p_k}{\log p_1} = \prod_{j=1}^{k-1}
\frac{\log p_{j+1}}{\log p_j} \ge \( \min_{1\le j\le k-1} 
\frac{\log p_{j+1}}{\log p_j} \)^{k-1},
\]
 and hence 
\be\label{Htheta}
\Lambda := \limsup_{p\to\infty} \frac{H(p)}{\log_2 p} \le \frac{1}{-\log \th_0}.
\ee
On the other hand,
K\'atai \cite{Ka1} proved that for some 
constant $c>0$, $H(p)\ge c\log_2 p$ for almost all primes $p$ (for all primes
$p\le x$ with $o(x/\log x)$ exceptions).
We prove a version with the constant made explicit in terms of 
the level of distribution of primes in progressions.

\begin{thm}\label{Prattlower}
 (a) If \eqref{BVQ} holds with $Q=x^{\th}$ and $R=o(x/\log x)$, then
for any $c<\frac{1}{\er^{-1}-\log\th}$, $H(p)>c\log_2 p$ for almost all 
primes $p$;

(b) If \eqref{BVQ} holds with $Q=x^{\th}$ and $R=x(\log x)^{-A}$
for every $A>1$,
then for every $c<\frac{1}{-\log \th}$, there is a $K$ so that $H(p)>c\log_2 p$ for $\gg x/(\log x)^K$ primes $p\le x$.  Consequently, $\Lambda
\ge \frac{1}{-\log\th}$.
\end{thm}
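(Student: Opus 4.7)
The plan is to build prime chains ending at $p$ recursively, descending from $p$ at each step to a well-chosen prime factor of $p-1$. Define sets $\mathcal{G}_k$ of primes inductively by $\mathcal{G}_1 = \{\text{all primes}\}$ and
\begin{equation*}
\mathcal{G}_k = \{p : p-1 \text{ has a prime divisor } q \in \mathcal{G}_{k-1} \cap (y_1(p,k), y_2(p,k)]\},
\end{equation*}
with $y_2(p,k) \le p^\theta$ so that \eqref{BVQ} applies to each conditional count. By induction on $k$, $\mathcal{G}_k \subseteq \{p : H(p) \ge k\}$, so both parts reduce to lower bounds on $|\mathcal{G}_k(x)|$ at $k = \lfloor c \log_2 x \rfloor$.

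To estimate the density $1 - \delta_k(x) := |\mathcal{G}_k(x)|/\pi(x)$, I would establish a recursive inequality by counting pairs $(p, q)$ with $p \le x$ prime and $q \mid p-1$ prime in the range $(y_1, y_2]$. By \eqref{BVQ}, the total pair count is $\sim \pi(x) M$ with $M := \log\log y_2 - \log\log y_1$, and restricting to $q \in \mathcal{G}_{k-1}^c$ multiplies this by an appropriate average of $\delta_{k-1}$. A Tur\'an--Kubilius style concentration bound---derivable from \eqref{BVQ} applied to single moduli up to $x^\theta$ and to products $q_1 q_2$ with $q_1, q_2 \le x^{\theta/2}$---then shows that $\omega_{(y_1, y_2]}(p-1)$ is close to $M$ for all but $o(\pi(x))$ primes $p \le x$. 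These two ingredients combine to yield an inequality of the shape
\begin{equation*}
\delta_k \le \delta_{k-1}(1 + o(1)) + e^{-M_k(1-\delta_{k-1})} + (\text{BV and concentration errors}),
\end{equation*}
where the exponential factor quantifies the Poisson-type probability that $p-1$ has no prime factor in the chosen range at all.

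For part~(a), the strong hypothesis $R = o(x/\log x)$ renders the Tur\'an--Kubilius step effective, and one chooses ranges $y_1^{(j)} = p_j^{\alpha_j}$, $y_2^{(j)} = p_j^\theta$ so that the density stays close to $1$ while the chain length is maximized. The constraint is the budget $\sum_j (-\log \alpha_j) \le \log\log p + O(1)$ imposed by $p_1 \ge 2$; careful optimization of the trade-off (large $M_j$ preserves density but shrinks $\log p$ quickly; small $M_j$ allows more steps but lets density drop) yields the stated constant $c < 1/(e^{-1} - \log\theta)$, the factor $e^{-1}$ arising from the extremal choice. For part~(b), the much weaker density requirement permits the crude choice $\alpha_j = \theta - \epsilon$ with fixed small $\epsilon > 0$: each step virtually saturates the level-of-distribution bound, the chain length approaches $(\log_2 x)/(-\log\theta)$ as $\epsilon \to 0$, and the resulting constant-factor density loss per step accumulates to $(\log x)^{-K}$ for some $K = K(c, \theta)$, which meets the density requirement. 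The bound $\Lambda \ge 1/(-\log\theta)$ is then immediate. The main obstacle is uniform control of the recursion over $\Theta(\log_2 x)$ iterations, ensuring that the cumulative error terms do not destroy the density bound; the fine balance of density preservation, maximal chain length, and termination at a prime $\ge 2$ that produces the $e^{-1}$ in part~(a) is the key technical challenge.
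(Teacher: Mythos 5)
Your part~(b) plan is essentially the paper's own argument: descend one level at a time through a window $(x^{\theta'},x^{\theta}]$, apply \eqref{BVQ} at each scale, and accept a constant-factor density loss per step, which over $\asymp\log_2 x$ steps accumulates to $(\log x)^{-K}$. The paper organizes this as an induction over dyadic intervals on $\mathcal{P}=\{p: H(p)\ge c'\log_2 p-c''\}$, noting that at most two primes from the window divide $p-1$, so no second moment is needed; modulo such bookkeeping your sketch of (b) is sound.

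The genuine gap is in part~(a): the one-level-at-a-time recursion cannot produce the constant $\er^{-1}$. If membership in $\mathcal{G}_k$ records only that $p-1$ has a prime factor $q\in\mathcal{G}_{k-1}$ in a window of $\log\log$-width $M_k$ below $p^{\theta}$, then your budget must charge the worst case $\log_2 p-\log_2 q\ge M_k-\log\theta$ per step, while the proportion of $p$ admitting such a $q$ is (heuristically Poisson, and at best what an upper-bound sieve gives) about $1-\er^{-M_k g_{k-1}}$, where $g_{k-1}$ is the density of $\mathcal{G}_{k-1}$ in the window. The map $g\mapsto 1-\er^{-Mg}$ has no positive fixed point unless $M>1$, so keeping the density bounded away from $0$ (let alone $1-o(1)$, which your plan needs since you never invoke anything like the tightness upgrade of Theorem \ref{tightleft}) forces $M_k>1$ and a per-step cost exceeding $1-\log\theta$; the scheme therefore tops out at $c<1/(1-\log\theta)$, a K\'atai-type bound, not $1/(\er^{-1}-\log\theta)$. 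The $\er^{-1}$ is not an ``extremal choice'' of window endpoints at a single level: it comes from iterating $k$ levels at once and counting chains. The expected number of chains $p_k\prec\cdots\prec p_0=p$ with all ratios $\le\theta$ and total extra $\log\log$-decrement $\le S$ is roughly $S^k/k!$, which exceeds $1$ precisely when $S>(k!)^{1/k}\sim k/\er$, i.e.\ extra cost $\er^{-1}$ per level \emph{on average along the chain}; one then needs a second-moment estimate over pairs of chains sharing the same $p_0$ (the paper's $M_1(S)\ge(\del+\del^{3/2})Y/\log Y$ versus $M_2(S)\ll\del^2\log(1/\del)\,Y/\log Y$) to turn this expectation into a positive proportion of $p$ carrying at least one such chain, and finally Theorem \ref{tightleft} to pass from a positive proportion to almost all. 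This multi-level, chain-counting structure is exactly what your set-based recursion discards — indeed the paper's remark after Theorem \ref{Prattlower} stresses that in a long chain it is unusual for most ratios $\log p_{j+1}/\log p_j$ to be near $1/\theta$, which is why the single-descent heuristic undershoots. A smaller repair: Tur\'an--Kubilius concentration only bounds the exceptional set by $O(\pi(x)/M)$, not $\er^{-M}\pi(x)$; the exponential saving you write down requires an upper-bound sieve (as used in the paper's proofs of Theorem \ref{tightleft} and of the $M_2$ bounds), not a variance argument.
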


\begin{cor}\label{HEH}
EH implies that for every $c<\er$, $H(p)>c\log_2 p$ for almost all $p$.
\end{cor}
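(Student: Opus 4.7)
The plan is to deduce the corollary directly from part (a) of Theorem \ref{Prattlower} by exploiting that EH permits $\theta$ arbitrarily close to $1$. Concretely, EH asserts that \eqref{BVQ} holds with $Q=x^{\theta}$ and $R=x(\log x)^{-A}$ for every fixed $\theta<1$ and every $A>0$. Choosing, say, $A=2$, the remainder $R=x(\log x)^{-2}$ satisfies $R=o(x/\log x)$, so the hypothesis of Theorem \ref{Prattlower}(a) is available for every $\theta\in(0,1)$.

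For such a $\theta$, Theorem \ref{Prattlower}(a) yields $H(p)>c\log_2 p$ for almost all primes $p$ whenever $c<\frac{1}{\er^{-1}-\log\theta}$. Now given any $c<\er$, the plan is to pick $\theta<1$ close enough to $1$ that $\er^{-1}-\log\theta<1/c$; this is possible because $-\log\theta\to 0^+$ as $\theta\to 1^-$, so $\er^{-1}-\log\theta\to\er^{-1}=1/\er>1/c$. Applying Theorem \ref{Prattlower}(a) for this choice of $\theta$ will then give the conclusion.

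There is no essential obstacle: the corollary is simply the limiting case $\theta\to 1^-$ of the quantitative bound in part (a), and the only thing to verify is that EH delivers the required error term $R=o(x/\log x)$, which it does comfortably.
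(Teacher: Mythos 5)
Your proposal is correct and matches the paper's (implicit) argument: Corollary \ref{HEH} is stated as an immediate consequence of Theorem \ref{Prattlower}(a), obtained exactly as you do by noting that EH supplies \eqref{BVQ} with $Q=x^{\theta}$ and $R=x(\log x)^{-2}=o(x/\log x)$ for every $\theta<1$, and then letting $\theta\to 1^-$ so that $\frac{1}{\er^{-1}-\log\theta}\to\er$. Nothing further is needed.
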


\begin{remark}
By the Bombieri-Vinogradov theorem and \eqref{BT}, for every $\eps>0$,
\eqref{BVQ} holds with $Q=x^{1-\eps}$ and $R=O_\eps(x/\log x)$.

If $\Lambda  \ge -1/\log \th$ for some $\th<1$, then there are
many chains $2=p_1 \prec \cdots \prec p_k$
with $\frac{\log p_{j+1}}{\log p_j}$ at most $1/\th$ on average.
Thus, although \eqref{Htheta} is likely an equality,
to prove this would require strong information about the set of
primes with $P^+(p-1)$ near $p^{\th_0}$.  The same difficulty
arises when trying to prove (a) with $\th=\th_0$.

The bound on $H(p)$ in (a) is weaker than in (b), since it is unusual in a chain
$p_1\prec \cdots \prec p_k$ for most of  the ratios $\frac{\log p_{j+1}}{\log p_j}$
to be close to $1/\th$.  The constant $\er^{-1}$ appearing in (a) is likely
best possible; see Conjecture \ref{Dpnormal1} below.
\end{remark}

Turning to upper bounds, before our work it was unknown if there 
is an infinite sequence of primes with $H(p)=o(\log p)$.
A natural  approach is to find $p$ such that 
$P^+(p-1)$ is small and use $H(p)=1+\max_{q|(p-1)} H(q)\ll \max_{q|(p-1)}
\log q$.  
However, our knowledge of smooth shifted primes is very weak 
(the world record is $P^+(p-1)<p^{0.2961}$ infinitely often \cite{BH}).
Using a new
and very different approach, we give a much stronger upper bound.

\begin{thm}\label{Prattupper}
We have $H(p) \le (\log p)^{0.9503}$ for almost all $p$.
\end{thm}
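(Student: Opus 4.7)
My plan is to proceed by a recursive, inductive estimate on the set
\[
B(x) := \{p \le x : H(p) > (\log p)^\alpha\}, \qquad \alpha = 0.9503,
\]
aiming at $|B(x)| = o(\pi(x))$. Unwinding the recursion $H(p) = 1 + \max_{q \mid p-1} H(q)$: if $p \in B(x)$, some prime $q \mid p-1$ has $H(q) > (\log p)^\alpha - 1$. If $q \notin B$, the bound $H(q) \le (\log q)^\alpha$ combined with a Taylor expansion of $(\log q)^\alpha > (\log p)^\alpha - 1$ forces $q > p^{\,1 - c(\log p)^{-\alpha}}$ for some constant $c = c(\alpha) > 0$. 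Consequently each $p \in B(x)$ either (i) has a prime divisor $q \mid p-1$ with $q > p^{\,1 - c(\log p)^{-\alpha}}$, or (ii) has some prime divisor $q \mid p-1$ already in $B$.

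Case (i) reduces to bounding the number of primes $p \le x$ whose largest prime factor of $p-1$ is unusually close to $p$. Heuristically this set has density $\asymp (\log x)^{-\alpha}$ among primes, and an unconditional upper bound of approximately this order should be achievable by carefully combining \eqref{BT} (applied in a form sharpened for $q$ near $x$), a dyadic decomposition in $\log(x/q)$, and the observation that each $p \le x$ has at most $O(1)$ prime divisors exceeding $x^{1/2}$. For case (ii), assuming inductively $|B(y)| \le F(y)\pi(y)$ for $y < x$ with a decaying $F$, one estimates $\sum_{q \in B,\, q \le x}\pi(x;q,1)$ by splitting at $q = x^{1/2}$: using \eqref{BT} and partial summation against $F$ for small $q$, and a direct count exploiting the scarcity of $p$ with a prime factor so close to itself for large $q$. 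Combining the contributions of cases (i) and (ii) yields a self-consistent inequality for $F$, whose only decaying solution requires $\alpha \ge 0.9503$; along the way, \eqref{fpupper} (bounding $f(p) = O(\log p)$) and Theorem \ref{Pxp} (bounding $N(x;q) \le C(\eps) x^{1+\eps}$) are used to control the propagation of bad primes through chains passing through a single Pratt tree.

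The main obstacle is that \eqref{BT} becomes crude for $q$ very close to $x$, so the naive sum $\sum_{q > x^{1-\delta}}\pi(x;q,1)$ is of order $\pi(x)\log\log x$ rather than the $\pi(x)(\log x)^{-\alpha}$ suggested by the Dickman heuristic. Overcoming this requires exploiting the near-$x$ geometry of prime divisors --- each $p \le x$ has at most $2$ prime divisors above $x^{1/2}$ --- together with the inductive sparsity of bad primes in that range. Balancing this loss against the gain $(\log x)^{-\alpha}$ from case (i) across the implicit $\log_2 x$ levels of the recursion is what pins down the specific exponent $\alpha = 0.9503$: a smaller exponent would sharpen the theorem but break the self-consistency of the induction, while inputs strictly stronger than \eqref{BT} (in the direction of Bombieri--Vinogradov or the Elliott--Halberstam conjecture) would push this exponent further toward $0$.
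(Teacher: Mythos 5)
There is a genuine gap, and it sits at the heart of your recursion. As you define it, membership in $B$ is measured against each prime's \emph{own} size, so every bounded prime is in $B$ (e.g.\ $H(2)=1>(\log 2)^{0.9503}$ and $H(3)=2>(\log 3)^{0.9503}$); consequently \emph{every} prime $p$ has a divisor of $p-1$ lying in $B$, and your case (ii) carries no information whatsoever --- the "self-consistent inequality for $F$" cannot yield $|B(x)|=o(\pi(x))$. The natural repair is to track the absolute threshold down the tree: a witness chain $q_j\prec\cdots\prec q_0=p$ with $H(q_j)>(\log p)^{\alpha}-j$, so that $q_j\ge \exp\{c((\log p)^{\alpha}-j)\}$ stays large. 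But then the one-level recursion still does not close: the expected number of prime divisors of $p-1$ in the range $[\exp\{(\log x)^{\alpha}\},x]$ is $\asymp(1-\alpha)\log_2 x$, so the coefficient multiplying $F$ in your inequality is $\gg\log_2 x$, not $<1$. Descending many levels at once to beat this runs directly into the $(c\log_2 x)^{k-1}$ loss of \eqref{EGPS1} (iterated Brun--Titchmarsh) or the $2^k k!$ loss in a $k$-dimensional sieve, which are fatal once $k$ is a power of $\log x$ --- and overcoming exactly these losses is the entire technical content of the paper's argument: Theorem \ref{condensed_chain}, proved via the link/duality decomposition $p_{j-1}=h_jp_j+1$, the sieve bound uniform in $k$ (Lemma \ref{sieve}), and the averaged singular-series estimates (Lemmas \ref{singupper}--\ref{singular}), none of which your proposal supplies or replaces. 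The tools you do invoke (\eqref{BT}, \eqref{fpupper}, Theorem \ref{Pxp}) are nowhere near strong enough for this step.

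A secondary but telling point: the exponent $0.9503$ does not arise from balancing a Dickman-type density $(\log x)^{-\alpha}$ against Brun--Titchmarsh losses, as you suggest. In the paper it is $1-\eta\beta$ with $\beta$ taken close to $1/(3+\eta)$ and $\eta=0.15718$ chosen maximal subject to $2\eta\er^{1+\eta}<1$; these constraints come from the $(2\eta\er^{1+\eta})^{l/2}$ main term and the $(jl)^{3+\eta}$-type sieve losses in Theorem \ref{condensed_chain}. So even granting your heuristics, your scheme would not reproduce (or pin down) the claimed exponent. Your case (i) count (primes $p$ with a divisor $q\mid p-1$, $q>p^{1-c(\log p)^{-\alpha}}$, having density about $(\log x)^{-\alpha}\log_2 x$) is fine, but it is the easy part; the theorem stands or falls on controlling long condensed chains, and that is missing.
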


The proof of Theorem \ref{Prattupper} involves showing that for most primes
$p$, all the primes at some bounded level of the tree are small.
In particular, this settles \cite[Conjecture 2]{EGPS}.  

\begin{thm}\label{EGPSconj}
For every $\eps>0$ and $\delta>0$, there is an integer $k$ so that for large $x$
and at least $(1-\delta)x$ integers $n\le x$, $P^+(\phi_k(n))\le x^{\eps}$.
\end{thm}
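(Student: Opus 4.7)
The plan is to reduce Theorem \ref{EGPSconj} to a density statement about Pratt trees that is essentially proved en route to Theorem \ref{Prattupper}, and then pass from primes to integers by a short sieve estimate. The elementary first step: from $\phi(m)=\prod_{p\mid m}p^{a_p-1}(p-1)$, any prime $q\mid\phi_K(n)$ can be traced back to some prime $r\mid n$ via a chain $q=q_0\preceq q_1\preceq\cdots\preceq q_K=r$ in which each step is either an equality (option~1, requiring $q_i^2\mid\phi_{K-i}(n)$) or a strict $\prec$ (option~2: $q_{i-1}\mid q_i-1$). Thus $q$ sits at some depth $j\le K$ in $T(r)$, so it suffices to control the primes at depth $\le K$ in the Pratt trees of the prime factors of $n$.

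The main input is the following quantitative refinement, which the techniques behind Theorem \ref{Prattupper} are designed to yield (the excerpt records that the proof ``involves showing that for most primes $p$, all the primes at some bounded level of the tree are small''): for every $\eta,\eta'>0$ there exists $K=K(\eta,\eta')$ such that, for all sufficiently large $y$, the number of primes $p\in(y,2y]$ for which some prime at depth in $[1,K]$ of $T(p)$ exceeds $y^{\eta}$ is at most $\eta'\pi(y)$. Extracting this from the level-by-level shrinkage analysis underlying Theorem \ref{Prattupper} is the technical heart of the reduction; the key point is that a \emph{single} $K$ must serve a $(1-\eta')$-fraction of primes at every dyadic scale simultaneously.

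Now fix $\eps,\delta>0$, call a prime $p\in(x^\eps,x]$ \emph{bad} if some prime at depth in $[1,K]$ of $T(p)$ exceeds $x^\eps$, and apply the refinement with $\eta=\eps$ and $\eta'$ chosen so that, using the Mertens-type bound $\sum_{p\in(y,2y]}1/p=O(1/\log y)$ on each dyadic range,
\[
\sum_{\substack{p\text{ bad}\\ x^\eps<p\le x}}\frac{1}{p}\;\le\; C\eta'\sum_{\substack{y=2^j\\ x^\eps\le y\le x}}\frac{1}{\log y}\;\le\; C\eta'\log(1/\eps)\;<\;\frac{\delta}{2}.
\]
Call $n\le x$ \emph{safe} if it has no bad prime divisor and no prime $q>x^\eps$ with $q^2\mid n$; the number of unsafe $n\le x$ is at most $\sum_{p\text{ bad}}\fl{x/p}+\sum_{q>x^\eps}\fl{x/q^2}<\delta x/2+O(x^{1-\eps})<\delta x$ for $x$ large. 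For safe $n$, if $q\mid\phi_K(n)$ had $q>x^\eps$, then any trace chain would either have a strict $\prec$ step (placing $q$ at positive depth $\le K$ in $T(q_K)$, so $q_K\mid n$ is bad, contradicting safety) or be entirely equalities (forcing $q=q_K$ and, via option~1 at the $\phi_1\to\phi_0$ step, $q^2\mid n$ with $q>x^\eps$, again contradicting safety). Hence $P^+(\phi_K(n))\le x^\eps$ for at least $(1-\delta)x$ integers $n\le x$.
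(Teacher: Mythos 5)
The reduction you build collapses at its stated key input, which is not only unproved but false. You require: for every $\eta,\eta'>0$ there is a $K$ such that at most $\eta'\pi(y)$ primes $p\in(y,2y]$ have \emph{some} prime at depth in $[1,K]$ of $T(p)$ exceeding $y^{\eta}$. Already for $K=1$ this event contains $\{P^+(p-1)>y^{\eta}\}$, which holds for a proportion of primes bounded away from zero (unconditionally a positive proportion of primes have $P^+(p-1)>p^{1/2}$, and conjecturally the proportion with $P^+(p-1)>p^{\eta}$ is $1-\rho(1/\eta)$, close to $1$ for small $\eta$); moreover enlarging $K$ only enlarges the event, so no choice of $K$ makes its density small. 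Consequently your ``bad'' primes are the majority in each dyadic block, $\sum_{p\ \mathrm{bad}}1/p$ has order $\log(1/\eps)$ rather than being $<\delta/2$, and the count of unsafe $n$ is not $<\delta x$. What the machinery behind Theorem \ref{Prattupper} actually yields (Theorem \ref{condensed_chain}) is a statement about depth \emph{exactly} $rl$: few primes $p\le x$ admit a full chain $p_{rl}\prec\cdots\prec p_0=p$ with $p_{rl}>x^{(r+1)^{-\eta}}$. A large prime at \emph{some} bounded depth is typical; a large prime at depth exactly $K$ is rare, because then all $K$ ratios $\log p_{j+1}/\log p_j$ must stay near $1$ on average, and that rarity is the whole content of the theorem.

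This is precisely where your otherwise attractive contraction of equality steps backfires: by letting the traced prime land at any depth $s\in[1,K]$ of $T(r)$, you are forced to control every depth up to $K$ simultaneously, which is impossible. The paper keeps the two cases separate. Case (i): some prime $q>x^{\eps/2}$ has $q^2\mid\phi_j(n)$ for some $j\le k$; these $n$ are counted directly by summing over chains with a Brun--Titchmarsh-type bound, giving $\ll_{\eps,k}x^{1-\eps/2}$ — note this is exactly the case your ``safe'' condition (which only excludes $q^2\mid n$, i.e.\ $j=0$) does not cover. Case (ii): the trace has exactly $k$ strict links, so $P^+(\phi_k(n))>x^{\eps/2}$ forces a length-$k$ chain below some prime $p_0\mid n$ with large bottom element; one then applies Theorem \ref{condensed_chain} (with $\eta=\frac17$, $r$ minimal with $(r+1)^{-\eta}<\eps/2$, $k=rl$) and partial summation over $p_0$, letting $l$ grow to beat $\delta$. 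Your prime-to-integer passage via $\sum_p x/p$ is fine in spirit (the paper performs the analogous partial summation), but the density statement it rests on cannot be repaired without restructuring the argument along these lines.
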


There is a folklore conjecture that $H(p)=O(\log_2 p)$ for most $p$. 

\begin{conj}\label{Dpnormal1}
$H(p)$ has normal order $\er \log_2 p$. 
\end{conj}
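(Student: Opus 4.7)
The plan is to establish Conjecture \ref{Dpnormal1} by constructing a rigorous probabilistic model of the Pratt tree as a branching random walk (BRW) on the doubly logarithmic scale, and then transferring its prediction back to the arithmetic setting via equidistribution of primes in progressions. The motivating heuristic is that for a ``generic'' integer $m$ of size $q$, the prime divisors of $m$ form a Poisson point process on the $\log_2$-scale of unit intensity on $[0,\log_2 q]$, and one expects shifted primes $q-1$ to inherit this behaviour. Under the heuristic, $T(p)$ becomes a BRW in which an individual at position $u$ produces offspring whose positions form a Poisson process of intensity one on $[0,u]$.

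First I would analyse this BRW rigorously. Writing $\rho_n(v)$ for the expected density at position $v$ in generation $n$ starting from a single ancestor at $u_0=\log_2 p$, the branching recursion $\rho_{n+1}(v)=\int_v^{u_0}\rho_n(u)\,du$ yields
\benn
\rho_n(v)=\frac{(u_0-v)^{n-1}}{(n-1)!}\qquad (0\le v\le u_0),
\eenn
and hence expected generation size $u_0^n/n!$. By Stirling, this quantity transitions from exponentially large to exponentially small precisely at $n=\er u_0$, which is where the constant $\er$ enters. A first-moment (Markov) bound then gives $H(p)\le(\er+\eps)\log_2 p$ with probability $1-o(1)$ in the model; for the matching lower bound one must show the BRW survives with probability $1-o(1)$ down to generation $(\er-\eps)u_0$. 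This is a Biggins/Kesten--Stigum-type statement that requires a second-moment computation on the number of chains from the root, decomposing $Z_n^2$ according to the generation at which two chains split.

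Second, I would transfer the BRW prediction back to $T(p)$. First-generation statistics follow from the classical joint distribution of $\omega(p-1)$ and of $\{\log_2 q:q\mid p-1\}$, which is accessible via Bombieri--Vinogradov, much as in the set-up of Theorem~A. For deeper generations one must know that, for almost all sub-tree roots $q$, the prime factors of $q-1$ are again Poisson-distributed in the appropriate sense; iterating this to depth $\er\log_2 p$ requires equidistribution of primes in progressions to moduli as large as $q^{1-o(1)}$, and the second-moment step also demands joint equidistribution of pairs of prime chains, essentially the Elliott--Halberstam conjecture in a quantitative form.

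The main obstacle is the lower bound at deep levels. The upper bound should be within reach unconditionally: refining the iterated Brun--Titchmarsh argument behind Theorem \ref{Pxp} to exploit the Poisson structure of the first few levels (replacing the wasteful factor $(c\log_2 x)^{k-1}$ in \eqref{EGPS1} by the much smaller $(\log_2 x)^{k-1}/(k-1)!$) should yield $H(p)\le(\er+\eps)\log_2 p$ for almost all $p$. The lower bound, however, needs second-moment estimates on pairs of prime chains and therefore information on primes in progressions to moduli beyond $\sqrt{x}$, which is the Bombieri--Vinogradov barrier. Absent a breakthrough here one can only hope for conditional results modulo EH or a suitable variant, paralleling the situation already encountered in Theorem \ref{Prattlower} and Corollary \ref{HEH}.
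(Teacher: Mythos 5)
This statement is a conjecture, and the paper does not prove it: Section \ref{sec:model} only offers a heuristic justification via the branching random walk model (Biggins' theorem giving $B_n\sim n/\er$, hence $T(\log 2/\log p)\sim \er\log_2 p$), together with the observation (Corollary \ref{HEH}) that the lower bound would follow from EH, and the explicit remark that the upper bound seems inaccessible even assuming EH plus a uniform prime $k$-tuples conjecture. Your proposal reproduces essentially this same heuristic — the Poisson/$PD(1)$ structure of the prime factors of shifted primes, the first-moment computation $\EEE Z_n = u_0^n/n!$ with transition at $n=\er u_0$, a second-moment argument for survival, and an EH-type transfer — so as a plan it is aligned with the paper's modelling, but it is not a proof, and you concede as much for the lower bound.

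The concrete gap is your claim that the upper bound $H(p)\le(\er+\eps)\log_2 p$ for almost all $p$ ``should be within reach unconditionally'' by refining the iterated Brun--Titchmarsh bound \eqref{EGPS1} to exploit the Poisson structure. This step fails. The known result that the number of primes at level $n$ of $T(p)$ is $\sim(\log_2 p)^n/n!$ holds only for \emph{fixed} $n$, and, as noted in the introduction, gives essentially no information about $H(p)$. To run a first-moment (Markov) bound at depth $n\asymp \log_2 p$ you must count prime chains with uniformity in the length $k$; every application of \eqref{BT} loses at least a factor $2$ (and much more when $\log p_{j+1}/\log p_j$ is near $1$, where the denominator $\log(x/q)$ degenerates), and these losses compound over $k\asymp\log_2 x$ levels, moving the first-moment threshold from $\er\log_2 p$ to at least $2\er\log_2 p$ even in the most optimistic accounting; uniform-in-$k$ sieve bounds in fact carry factors like $2^k k!$ (see Lemma \ref{sieve}), and taming them is precisely the difficulty addressed in Theorem \ref{condensed_chain}, whose output is only $H(p)\le(\log p)^{0.9503}$ for almost all $p$ — nowhere near $O(\log_2 p)$. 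Getting the constant $\er$ would require asymptotic (not merely upper-bound) equidistribution of primes in progressions to moduli as large as $x^{1-o(1)}$, uniformly along chains of unbounded length, which is beyond EH as currently formulated; this is exactly why the paper states that even the conditional upper bound is out of reach. Your lower-bound route (second moments over pairs of chains under EH) is plausible but also only conditional, and the paper obtains the same conclusion under EH by a different, simpler induction on dyadic intervals (Theorem \ref{Prattlower}(a) combined with Theorem \ref{tightleft}). So the proposal does not establish the conjecture, and its one genuinely new claim — the unconditional upper bound — is unsupported.
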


\begin{remark}
The lower bound in Conjecture \ref{Dpnormal1} follows from EH (Corollary \ref{HEH}).
Conversely, by \eqref{Htheta}, the lower bound in Conjecture \ref{Dpnormal1}
implies that $\th_0 \ge \er^{-1/\er}=0.6922\ldots$ (with a bit more work
using \eqref{BT}, one can deduce $\th_0\ge 0.73$).  

The upper bound in Conjecture  \ref{Dpnormal1} appears to 
be even more difficult.
We cannot see a way to deduce it from standard conjectures in prime
number theory, e.g. EH plus a uniform prime $k$-tuples conjecture,
although Theorem \ref{Prattupper} can be significantly improved under such hypotheses.
\end{remark}

Understanding $H(p)$ requires detailed knowledge of the distribution
of the large prime factors of shifted primes $p-1$.  Making a reasonable
assumption for this distribution (a consequence of EH),
in Section \ref{sec:model} we model the Pratt tree by a branching random walk.
The model provides a much more precise version of Conjecture \ref{Dpnormal1}.

\begin{conj}\label{Dpnormal2}
$H(p)=\er \log_2 p - \frac32 \log_3 p+E(p)$, where
for  some fixed $c,c'>0$ and  any $z\ge 0$, the number of 
$p\le x$ for which $E(p) \ge z$ is $\gg \er^{-c'z}\pi(x)$ and
$\ll \er^{-cz} \pi(x)$, and $E(p)\le -z$
for $O(\exp\{ - \er^{cz} \} \pi(x))$ primes $\le x$.
\end{conj}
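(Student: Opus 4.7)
The plan is to derive Conjecture~\ref{Dpnormal2} from the branching random walk (BRW) model introduced in Section~\ref{sec:model}, invoking the by-now standard theory for the extrema of a supercritical BRW (Bramson, Addario-Berry--Reed, A\"id\'ekon, Hu--Shi, Madaule). First I would fix the model: pass to the coordinate $L=\log\log q$ on the Pratt tree. Under an EH-type hypothesis on the distribution of prime factors of shifted primes, the offspring positions $\{\log\log r : r\mid q-1\}$ form (in the limit) a Poisson point process on $(\log\log 2, L]$ of unit intensity, so the downward displacements $\xi_i = L-\log\log r$ form a PPP of intensity $d\xi$. A particle is absorbed on reaching the threshold $\log\log 2$, and $H(p)$ equals the maximum generation reached when the system is started with a single particle at $L_0=\log_2 p$.

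Next I would extract the first two orders. A Cram\'er-type analysis applied to the offspring point process shows that the associated log-Laplace equation has a unique critical exponent $\theta^\ast = \er$, corresponding to velocity $v^\ast=1/\er$: the rightmost surviving particle at generation $n$ sits near $L_0-n/\er$, so that $H(p)\sim \er\log_2 p$ to leading order. Bramson's theorem and its subsequent refinements (A\"id\'ekon, Hu--Shi) sharpen the minimum lineage length by a logarithmic correction of size $\tfrac{3}{2\theta^\ast}\log n$; inverting with $\theta^\ast=\er$ and $\log n\sim\log_3 p$ produces exactly the $-\tfrac{3}{2}\log_3 p$ correction to the linear term $\er\log_2 p$ predicted by the conjecture.

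For the fluctuations $E(p)$, I appeal to the fine asymptotics of the BRW minimum. Work of A\"id\'ekon and Madaule shows that the centered minimum converges in distribution to a random variable $D$ whose right tail satisfies $\PPP(D\geq z) \asymp z\,\er^{-\theta^\ast z}$ as $z\to\infty$; after absorbing the polynomial factor one obtains $\er^{-c'z}\pi(x)\ll|\{p\leq x:E(p)\geq z\}|\ll\er^{-cz}\pi(x)$ for any $0<c<\theta^\ast<c'$. For the left tail, classical BRW large-deviation bounds (derived from the additive and derivative martingales together with large-deviation control on the underlying Galton--Watson population) give $\PPP(M_n-v^\ast n\geq z)\ll\exp(-c\,\er^{\theta^\ast z})$, which translates into $|\{p\leq x:E(p)\leq -z\}|\ll\exp(-\er^{cz})\pi(x)$.

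The main obstacle is not the probabilistic analysis, which is essentially standard given the model, but the rigorous justification of the model itself. Establishing that the prime factors of $p-1$ behave like a Poisson process with the stated intensity, uniformly over most primes $p$ and across many generations of the tree, would require control on $\pi(x;q,1)$ for $q$ as large as $x^{1-o(1)}$, well beyond EH. Moreover, the branching system is not genuinely Markovian: the offspring of different siblings are correlated through the joint distribution of prime factors of $q-1$ and $r-1$ for $r\mid q-1$, and a uniform prime $k$-tuples conjecture (or something comparable) seems required to control these correlations carefully enough to preserve both the precise coefficient $\tfrac{3}{2}$ of $\log_3 p$ and the tail exponents.
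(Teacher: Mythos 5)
Bear in mind that the statement you are treating is a \emph{conjecture} in the paper: it has no proof there, and the paper's support for it is exactly the stochastic model of Section~\ref{sec:model}. Your proposal is essentially the same route the authors take. They model the Pratt tree by a branching random walk whose displacements are logarithms of a $PD(1)$ fragmentation (the LPD law), obtain the centering $\er\log_2 p-\frac32\log_3 p$ by inverting $b_n=\frac{n}{\er}+\frac{3}{2\er}\log n+O(1)$ (Theorem~\ref{bnsharp}, quoted from \cite{ABF}), and obtain the two tail shapes in Conjecture~\ref{Dpnormal2} from Theorem~\ref{tails}, which they prove directly for this model; your closing paragraph also matches their view that the real obstruction is justifying the model over the primes (control of $\pi(x;q,1)$ for $q$ near $x$, correlations across generations), which is precisely why the statement remains a conjecture and why your argument, like theirs, is a heuristic rather than a proof.

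Two caveats on the probabilistic half, where you diverge from the paper. First, the offspring point process is not Poisson: the $PD(1)$ fragments sum to $1$, so only the intensity measure of $\{-\log x_i\}$ is Lebesgue; this is harmless for the first-moment (many-to-one) computation $\EEE\sum v_j^s=1/s$ used in the paper, but it means the hypotheses of the general limit theorems you cite must actually be verified for the LPD law, which has infinitely many offspring. Second, and more substantively, the paper states explicitly (Conjecture~\ref{conjX}) that convergence in distribution of $B_n-b_n$ is \emph{not} known for this displacement law, so the A\"id\'ekon--Madaule tail asymptotics for a limiting variable $D$ cannot simply be quoted as ``standard''; the authors sidestep this by proving uniform-in-$n$ tail bounds (Theorem~\ref{tails}): an exponential lower and upper bound on $\PPP\{B_n-b_n\le -x\}$ and a doubly exponential bound on $\PPP\{B_n\ge b_n+x\}$, which is exactly the shape asserted for $E(p)$ and does not require a limit law. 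If you reroute your fluctuation step through such direct tail estimates (or first verify the conditions needed for the fine convergence theory in the LPD setting), your heuristic aligns completely with the paper's.
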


Notable features of Conjecture
\ref{Dpnormal2} are (i) the \emph{tightness} of $E(p)$: the distribution
of $H(p)$ over $p\le x$ does not widen as $x\to\infty$, and (ii) the
pronounced asymmetry of the distribution of $E(p)$.  The analogs of
these features for our probabilistic model are proved rigorously.

Assuming $\text{Median}\{H(p): p\le x\}$ grows slowly, we show that $H(p)$ is tight
to the left of its median.

\begin{thm}\label{tightleft}
Suppose $g$ and $h$ are increasing, $0\le
g(x)\le h(x)$, $h(x^2)-h(x)\le K$ and $g(x^2)-g(x)\le
K$ for $x\ge 1$.  Suppose, for large $x$, that $H(p)\ge h(p)$ for at least
$c\pi(x)$ primes $\le x$.  Then $H(p) \ge h(p) - g(p)$
for all primes $p\le x$ with at most $O(\pi(x) \exp \{ - \frac{c\log
  2}{K} g(x) \})$ exceptions. 
\end{thm}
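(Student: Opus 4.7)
My plan is to perform a descent along the ``largest prime factor'' chain for each exceptional prime. Set $D(p) = H(p) - h(p)$ and $\mathcal{B}_t(y) = \{p \le y : D(p) < -t\}$. The hypothesis then reads $|\mathcal{B}_0(y)| \le (1-c)\pi(y)$ for all large $y$, and since $g(p) \le g(x)$ whenever $p \le x$, it suffices to prove $|\mathcal{B}_{g(x)}(x)| \ll \pi(x)\exp(-\tfrac{c\log 2}{K} g(x))$.

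The key inequality is a descent estimate: whenever $q \mid p-1$ is a prime with $q \ge \sqrt p$, we have $p \le q^2$, so $h(p) \le h(q^2) \le h(q) + K$, and combined with $H(p) \ge H(q) + 1$ this gives $D(p) \ge D(q) - (K-1)$. Since $p - 1$ has at most one prime factor exceeding $\sqrt p$, namely $P^+(p-1)$ in that case, I conclude: \emph{if $p \in \mathcal{B}_{t+K}(y)$ and $P^+(p-1) > \sqrt p$, then $P^+(p-1) \in \mathcal{B}_t(y)$.} I would then iterate this descent $p \to P^+(p-1) \to P^+(P^+(p-1)-1) \to \cdots$; each ``doubling'' step peels $K$ units off $t$ and holds for a density-$(\log 2 + o(1))$ fraction of primes by the classical Dickman--Pomerance theorem on the distribution of the largest prime factor of $p - 1$, while independently the chain must terminate at a prime in $\mathcal{B}_0$, which has density at most $1 - c$ at every scale by hypothesis. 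Heuristically, after $j = \lfloor g(x)/K \rfloor$ iterations this yields $|\mathcal{B}_{jK}(x)| \ll \pi(x)(1 - c\log 2)^j \le \pi(x)\exp(-c\log 2\cdot j)$, which is the target.

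The main obstacle is to promote this heuristic to a rigorous recursion. The naive containment $\mathcal{B}_{(j+1)K}(y) \subset \{p : P^+(p-1) \le \sqrt p\} \cup \{p : P^+(p-1) \in \mathcal{B}_{jK}(y)\}$ yields only $A_{j+1} \le (1-\log 2) + (\log 2)A_j$ for $A_j = |\mathcal{B}_{jK}|/\pi$, whose fixed point is $1$ rather than $0$. To recover genuine geometric decay, one must also apply the descent inequality to ``smooth'' exceptional primes using prime factors $q$ in each size class $(p^{1/2^m}, p^{1/2^{m-1}}]$: here $h(p) - h(q) \le mK$ forces $q \in \mathcal{B}_{t - (m-1)K + 1}(y)$, and summing these contributions over $m \ge 1$ with weights given by the Dickman densities $\rho(2^{m-1}) - \rho(2^m)$ gives the true convolution recursion. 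An equidistribution input (e.g., Bombieri--Vinogradov) is also needed to verify that the sets $\mathcal{B}_t$ do not concentrate anomalously on primes within any size class; combining these pieces yields the exponential decay with rate $c\log 2/K$.
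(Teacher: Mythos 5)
There is a genuine gap here: what you have written is a program rather than a proof, and the repair you propose for the failed naive recursion rests on inputs that are either unavailable or not quantified. Concretely: (i) your convolution recursion weights the size classes of $P^+(p-1)$ by Dickman densities, but the Dickman law for $P^+(p-1)$ — even the single statement that $P^+(p-1)>\sqrt{p}$ on a set of primes of density $\log 2$ — is not an unconditional theorem; in the top range it is essentially of Elliott--Halberstam strength (compare the discussion of $\theta_0$ in the introduction), whereas the theorem to be proved is unconditional. (ii) More seriously, each descent step needs the event $\{P^+(p-1)\in\mathcal{B}_t\}$ to occur with frequency governed by the density of $\mathcal{B}_t$, where $\mathcal{B}_t$ is an \emph{arbitrary} set of primes about which only a counting bound is known; this is an equidistribution statement for the $P^+$-descent relative to arbitrary prime sets, and it must be iterated $\lfloor g(x)/K\rfloor\to\infty$ times with errors controlled uniformly in the number of steps. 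Bombieri--Vinogradov gives averaged information for moduli up to $x^{1/2}$ only, and it is not clear how to prevent the error terms from swamping the geometric gain; you flag this ingredient but do not supply it. So the heuristic $(1-c\log 2)^{j}$ decay remains a heuristic.

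The difficulty is avoidable, and the paper's proof shows how: no descent and no knowledge of $P^+(p-1)$ is needed, because $H(p)\ge 1+H(q)$ for \emph{any} prime $q\mid p-1$, not just the largest. Set $m=\lfloor g(x^{1/2})/K\rfloor$ and $Q=x^{2^{-m}}$. If $p\in(x^{1/2},x]$ and $p-1$ has some prime factor $q\in(Q,x^{1/4}]$ with $H(q)\ge h(q)$, then $H(p)\ge 1+h(q)\ge h(Q)\ge h(x)-mK\ge h(p)-g(p)$, since $h$ grows by at most $K$ per squaring. The density hypothesis plus partial summation shows the good primes $q$ in $(Q,x^{1/4}]$ satisfy $\sum 1/q\gg cm$ (about $c\log 2$ per squaring of the range), and a single upper-bound sieve (Halberstam--Richert, Theorem 4.2) bounds the number of $p\le x$ for which $p-1$ avoids every such $q$ by $\ll \pi(x)\prod(1-1/q)\ll\pi(x)2^{-cm}$, which is the stated bound. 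This one-step argument is unconditional and sidesteps exactly the two obstructions above; if you want to salvage your approach, the lesson is to replace "largest prime factor" by "some prime factor above a slowly shrinking threshold" so that an upper-bound sieve, rather than a distributional law for $P^+(p-1)$, does all the work.
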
 

We conclude this section with a conjecture about prime chains, which
follows from the prime $k$-tuples conjecture (with $m=2$ below)
but should be ``easier''.
It is a multiplicative analog of the statement that the
primes contain arbitrarily long arithmetic progressions, recently
proved by Green and Tao \cite{GT}.
Even the case $k=3$ is not known.

\begin{conj}
For each $k\ge 3$, there are infinitely many prime $k$-tuples
$(p_1,\ldots, p_k)$ where, for some $m$, $p_{j+1}=m p_j + 1$ for $1\le
j\le k-1$.
\end{conj}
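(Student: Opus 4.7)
The plan is to fix a multiplier $m$ (the parenthetical ``$m=2$'' in the statement marks the natural choice) and convert the chain condition into a statement about simultaneous prime values of $k$ linear forms in one variable. Iterating $p_{j+1}=mp_j+1$ gives
\[
 p_j = m^{j-1} p_1 + \frac{m^{j-1}-1}{m-1}, \qquad 1\le j\le k,
\]
so the conjecture reduces to showing that the forms
\[
 L_j(x) := m^{j-1} x + \frac{m^{j-1}-1}{m-1}, \qquad j=1,\ldots,k,
\]
simultaneously represent primes for infinitely many positive integers $x=p_1$.

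The next step is to verify Hardy--Littlewood admissibility of $(L_1,\ldots,L_k)$ when $m=2$. Here $L_j(x)=2^{j-1}(x+1)-1$, so $L_j(x)\equiv 0\pmod\ell$ is equivalent to $x+1\equiv 2^{-(j-1)}\pmod\ell$ for an odd prime $\ell$. The set $\{2^{-(j-1)}\bmod\ell:1\le j\le k\}$ has at most $\min(k,\ord_\ell(2))\le \ell-1$ elements, so some residue of $x+1\pmod\ell$ avoids all of them; modulo $2$ one takes $x$ odd. Thus the tuple is admissible, and the prime $k$-tuples conjecture immediately yields infinitely many chains of length $k$ with ratio $m=2$.

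The main obstacle is, of course, that the prime $k$-tuples conjecture remains open for $k\ge 2$: the case $k=2$, $m=2$ already contains the Sophie Germain conjecture, and for general $k$ one recovers the conjecture on Cunningham chains of length $k$. The author's remark that our conjecture ``should be easier'' plausibly refers to the extra freedom of varying $m$: one has a two-parameter family of candidate tuples $(m,p_1)$ rather than a single one-parameter family, and the associated system of Diophantine conditions has more room to be attacked by sieves of higher dimension. A natural unconditional attempt for $k=3$ would be to count weighted pairs $(m,p)$ with $p$, $mp+1$, and $m^2p+m+1$ all prime and $m$ in some range such as $m\le p^{\eta}$, combining bilinear/Vinogradov-type decompositions with Bombieri--Vinogradov equidistribution to treat the two secondary primes. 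The parity barrier of sieve theory, however, appears to preclude extracting all three primes simultaneously by purely sieve-theoretic means, so even this relaxed attack would likely yield only an almost-prime analogue, and it is not clear to me how to circumvent this obstruction without a substantial new input.
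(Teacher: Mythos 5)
The statement you were asked to prove is not a theorem of the paper at all: it appears there as an open conjecture, and the authors say explicitly that it follows from the prime $k$-tuples conjecture (with $m=2$) but that even the case $k=3$ is unknown. So there is no proof in the paper to compare yours against, and your proposal — correctly — does not claim to supply one. What you do prove is exactly the paper's parenthetical remark: fixing $m=2$ and iterating $p_{j+1}=2p_j+1$ gives the forms $L_j(x)=2^{j-1}(x+1)-1$, and your admissibility verification is sound (for an odd prime $\ell$ the forbidden residues for $x+1$ are among the at most $\ord_\ell(2)\le \ell-1$ values $2^{-(j-1)}\bmod \ell$, while $x+1\equiv 0\pmod{\ell}$ gives $L_j(x)\equiv -1$, and modulo $2$ one takes $x$ odd), so the chain statement for every $k$ is indeed a consequence of Hardy--Littlewood. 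That is a conditional reduction, not a proof, and you are candid about the obstruction: the parity barrier blocks any purely sieve-theoretic unconditional attack, even for $k=3$.

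Your reading of ``should be easier'' also matches the paper's intent: the extra freedom is that $m$ may vary with the tuple, which is what makes the conjecture a multiplicative analogue of the Green--Tao theorem (arbitrary common difference) rather than of a fixed-pattern $k$-tuples statement such as Cunningham chains with $m=2$; a genuine attack would presumably have to average over $m$ rather than fix it, as you suggest at the end. In short, your proposal is a correct analysis of the problem's status and of the conditional implication the paper alludes to, but the conjecture itself remains unproved both in the paper and in your write-up, so nothing here should be mistaken for a proof of the stated claim.
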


{\bf Notation.} 
The letters $p$ and $q$,
with or without subscripts, always denote primes.  Constants
implied by the $O$, $\ll$ and $\order$ symbols do not depend on any parameter
unless indicated.  In Section \ref{sec:model}, we use $\PPP$ for
probability and $\EEE$ for probabilistic expectation.

%
%
\section{Sifted chains: proof of Theorem \ref{Pxp}}\label{sec:Pxp}
%
%

 The underlying idea is a sieve; relax the
condition that the numbers in the chain are prime, and only
require that they do not have small prime factors. Let $y\ge 2$ and
let $r$ be the product of the primes $\le y$. For $(a,r)=1$, let
$G_a(x;y)$ be the number of chains $n_1 \prec \cdots \prec n_k$ with $n_1=a$,
 with  $n_k/n_1\le x$ and consisting of numbers coprime to $r$. 
If $p>y$, then $N(x;p) \le G_p(x,y)$.
There are integers (``links'') $m_1,
\ldots,m_{k-1}$ with $n_{j+1}=m_jn_j+1$ for $1\le j\le k-1$.
For positive integers $a,b$ and real $s>1$,
$$
S(a,b) = S(a,b;r,s) = \sum_{\substack{m\ge 1 \\ am+1\equiv b(\!\bmod{r})}}
m^{-s}
$$
encodes the possible links $m$ from
a number $n_i \equiv a\pmod{r}$ to a number $n_{i+1} \equiv b
\pmod{r}$.

Fix $r,s$ and let $U_r = (\ZZ/r\ZZ)^*$.
Let $A_k(a_1,a_k)$ be the sum of $(m_1\cdots m_{k-1})^{-s}$ over all
tuples $(m_1,\ldots,m_{k-1})$ which could serve as links in a chain
starting from
a number $n_1\equiv a_1\pmod{r}$, ending with a number $n_k\equiv
a_k\pmod{r}$ and with all numbers in the chain coprime to $r$.
Then $A_2(a_1,a_2)=S(a_1,a_2)$ and for $k\ge 3$,
$$
A_k(a_1,a_k) = \sum_{a_2,\ldots,a_{k-1}\in U_r} S(a_1,a_2) S(a_2,a_3) \cdots
S(a_{k-1},a_k).
$$
Let $V_k(a_1)$ be the column vector $(A_k(a_1,a_k): a_k\in U_r)$.
For consistency, let $V_1(a_1)$ be a vector with all zero entries
except for an entry of 1 in the $a_1$ position.  Since
$$
A_{k+1}(a_1,a_{k+1}) = \sum_{a_k\in U_r} A_k(a_1,a_k) S(a_k,a_{k+1}),
$$
we obtain $V_{k+1}(a_1) = M V_k(a_1),$
where  $M =  M(r,s) = \bigl( S(a,b) \bigr)_{b,a\in U_r}.$
The rows of $M$ are indexed by $b$ and the columns are indexed by
$a$. Finally, let $F_k(a_1) = \sum_{a_k} A_k(a_1,a_k)$, so that
\[
 F_k(a_1) = (1,\ldots,1) V_k(a_1) = (1,\ldots,1)
M^{k-1} V_1(a_1);
\]
i.e., $F_k(a_1)$ is the sum of the entries of
column $a_1$ in $M^{k-1}$.
Since $m_1\cdots m_{k-1} \le n_k/n_1\le x$,
\be\label{NF}
G_a(x;y) \le \inf_{s>1} \Big(x^s \sum_{1\le k\le \frac{\log x}{\log
2}+1} F_k(a)\Big).
\ee
Observe that the sum on $k$ in \eqref{NF}, if extended to
$k=\infty$, is convergent if and only if $M$ is a contracting
matrix, i.e., all the eigenvalues of $M$ have modulus $<1$.  Since
$M$ has positive real entries, the Perron-Frobenius Theorem implies
that the eigenvalue with largest modulus is positive, real and
simple.  Call this eigenvalue $\lam(s;y)$.

We show below that if $y$ is large and $s\ge 1 + \frac{\log_2 y}{\log y}$,
then $\lam(s;y)<1$.
Accurate estimation of
$\lam(s;y)$ is difficult for large $y$, but the largest row sum
of $M$ serves as an upper bound.  For a generic matrix $A$, let
$R_b(A)$ be the sum of the entries in the row indexed by $b$,
and let $R(A)$ be the maximum row sum of $A$.  For row $b$ of $M$,
write $d=(b-1,r)$ and $b'=\frac{b-1}{d}$.  Then
\begin{equation*}
R_b(M) = \sum_{a\in U_r} \sum_{am\equiv b-1 (\!\bmod{r})} m^{-s} =
d^{-s} \sum_{(k,r/d)=1} k^{-s} \# \{ a\in U_r:ak\equiv b'\,
(\!\bmod{\, r/d}) \}.
\end{equation*}
The congruence $ak \equiv b' \pmod{r/d}$
has a unique solution modulo $r/d$, and hence has
$\phi(d)$ solutions $a\in U_r$.  Thus,
\begin{equation*}
R_b(M)=\frac{\phi(d)}{d^s} \sum_{(k,r/d)=1} k^{-s}  =
\frac{\phi(d)}{d^s} \prod_{p\nmid (r/d)} (1-p^{-s})^{-1} = 
\prod_{p>y}(1-p^{-s})^{-1} \prod_{p|d} \frac{p-1}{p^s-1}.
\end{equation*}
Therefore, since $d$ is always even,
\be\label{RM}
R(M) = \frac{1}{2^s-1} \prod_{p>y} (1-p^{-s})^{-1}.
\ee

Since $R(AB)\le R(A) R(B)$, $R(M^{k-1}) \le R(M)^{k-1}$.
To bound $G_a(x;y)$, we need to
bound the largest \emph{column} sum of $M^{k-1}$.  Lacking a better
approach, we use the crude bound $\phi(r) R(M)^{k-1}$.  Thus,
$F_k(a) \le  \phi(q) R(M^{k-1}) \le \phi(r) R(M)^{k-1}.$
By \eqref{NF},
\[
G_a(x;y) \le \phi(r) \inf_{s:R(M)<1}
\frac{x^s}{1-R(M)}.
\]
By standard prime number estimates, if $1<s\le 2$, then
\[
-\sum_{p>y} \log(1-p^{-s}) = O(1/y^{2s-1}) + \sum_{p>y} p^{-s} 
 \ll \frac{\er^{-(s-1)\log y}}{(s-1)\log y}.
\]
Take $y=\frac{\log x}{\log_2 x}$ and $s=1+\frac{\log_2 y}{\log y}$.  
Since $2^s-1=1+(2\log 2)(s-1)+O((s-1)^2)$, 
\eqref{RM} implies
$1-R(M) \sim (2\log 2) (s-1)$ as $x\to\infty$.   
Since $\phi(r) \le r = \er^{(1+o(1))y}$ as $x\to\infty$, this proves 
Theorem \ref{Pxp}.

%
\section{Proof of Theorem \ref{Nx}}\label{sec:Px}
%

Let $Q(p)$ be the multiset of prime labels appearing in $T(p)$,
and let $T'(p)$ be the subtree of $T(p)$ consisting of nodes with
odd prime labels.  There is a natural bijection between $T(p)$ and $T'(p)$,
obtained by adding to every node in $T'(p)$ a child node with label 2.
Let $l(n)=\prod_{p^a \| n} p^{a-1}$.  The quantity $\frac{q}{q-1}l(q-1)$
measures the ``loss of mass'' when descending from a node labelled $q$
to its child nodes: in fact, it is easy to see that
\[
 \prod_{q\in Q(p)} \( \frac{q}{q-1} l(q-1) \) = p.
\]
If $p>2$, exactly half of the nodes in $T(p)$ are labelled with 2, thus
\be\label{prodl}
\prod_{q\in Q(p)} l(q-1) \le p 2^{-\frac12 f(p)}.
\ee

Consider the set of $p\le x$ with $f(p)=h$, where $h=2n$ is positive and even.
Let $\mathcal{T}$ be the set of rooted trees on $n$ nodes.  
For each $T'\in \mathcal{T}$, we
consider separately the primes $p$ with $T'(p)$ tree-isomorphic to $T'$.
Form the tree $T$ on $h$ nodes, by adding to each node of
$T'$ an additional child node.  We count in how many ways we can label with primes the nodes of $T$, with the leafs having label 2 and the root having
label $p\le x$.  For a given prime $p$, there may be more than one way
to assign primes in the Pratt tree to the nodes of $T'$; this occurs
when some node has two or more child trees that are isomorphic (as rooted trees).  Thus, for each $T'$, we count ways to assign primes to the nodes,
and divide by the number of ways in which 
we can permute the nodes; that is, the
number $I(T')$ of isomorphisms of $T'$.
Assign to each node
an ordinal number $1,2,\ldots,h$ so that the children of node $j$
are assigned lower ordinals (e.g., node 1 will be a lowest leaf, and node $h$
will be the root).  To node number $j$, we let $q_j$ be its prime label 
and $l_j=l(q_j-1)$.  Let $B_j$ be the set of ordinal numbers of the 
children of node
$j$, and observe that $B_1,\ldots,B_h$ depend only on $T'$.  With this notation,
\be\label{qlq}
q_j-1 = l_j \prod_{k\in B_j} q_k.
\ee

With $T$ fixed, \eqref{qlq} implies a natural bijection between 
$(q_1,\ldots,q_h)$ and $(l_1,\ldots,l_h)$ (recall that leafs have $q_j=l_j=2$).
By \eqref{prodl}, we have for any $\beta>0$
\[
 |\{ p\le x: f(p)=h \}| \le \sum_{T'\in \mathcal{T}} \frac{1}{I(T')} 
\sum_{l_1,\ldots,l_h}
\pfrac{x 2^{-h/2}}{l_1 \cdots l_h}^\beta.
\]
Suppose that $j\ge 2$ and that $l_1,\ldots,l_{j-1}$ have been chosen.
If node $j$ is a leaf of $T$, then $l_j=1$.  
Otherwise, using \eqref{qlq},
the primes $q_k$ for $k\in B_j$ are determined by $l_1,\ldots,l_{j-1}$.
Moreover, a prime $r|l_j$ must equal one of these primes $q_k$.  Hence
\[
 \sum_{l_j} l_j^{-\beta} \le \prod_{k\in B_j} \( 1 - q_k^{-\beta} \)^{-1}
\le \(1-2^{-\beta}\)^{-1} \(1-3^{-\beta}\)^{1-|B_j|}.
\]

By Borchardt's formula\footnote{commonly known as Cayley's formula.}
 \cite{Bor} for counting labelled trees, 
\[
\sum_{T'\in \mathcal{T}} \frac{1}{I(T')} = \frac{n^{n-2}}{(n-1)!} = 
\frac{n^{n-1}}{n!} \le \er^n. 
\]
Since $\sum_{|B_j|\ge 1} (|B_j|-1)=h/2-1$, we conclude that
\be\label{fph}
|\{ p\le x: f(p)=h \}| \le \er^{h/2} \(x 2^{-h/2} \)^\beta 
\(1-2^{-\beta}\)^{-h/2} \(1-3^{-\beta}\)^{-h/2} 
\ee

Taking $\beta=0.37$, the right side of \eqref{fph} is
$\le x^{0.37} (27.8371)^{h/2}\le x^{0.999}$ for $h\le 0.378 \log x$.
For the second part of Theorem \ref{Nx}, assume $h\le (2/5)\log x$ (if 
$h\ge 3\log x$, there are no such $p$ and for $(2/5)\log x < h\le 3\log x$,
$(\frac{6\log x}{h})^h > x$).
Take $\beta=h/\log x$.  
Since $0< \beta \le 2/5$, $2^\b-1\ge \b \log 2$ and $1-3^{-\b}
\ge 0.889 \beta$.  Hence, the right side of \eqref{fph}
is $\le x^{\b} (4.412/\b^2)^{h/2}$.

We remark that numerical improvements are possible by refining the above
analysis; e.g. using the fact that leafs of $T'$ must be labelled with
a Fermat prime.

%
%
%
\section{Lower bounds for $H(p)$: proof of Theorem \ref{Prattlower}}
\label{sec:Prattlower}
%
%
%

We show part (b) first, as the proof is much easier.

\begin{proof}[Proof of Theorem \ref{Prattlower} (b)]
Let $c'$ and $\th'$ satisfy $\th' > 1/3$ and
$c < c' < \frac{1}{-\log \th'} < \frac{1}{-\log \th},$
and define $K$ by $8\theta^K = \th-\th'.$
Let $x_0$ be large, depending on $K,c,c',\th,\th'$ and put
$c''=c'\log_2 (x_0^3)$.  Let
$\mathcal{P}=\{p : H(p)\ge c'\log_2 p - c''\}.$
In particular, $\mathcal{P}$ contains all primes $\le x_0^3$.  We
shall prove
\be\label{Prattlow2}
Q(x) := \left| \mathcal{P} \cap (x/2,x] \right| \ge\frac{x}{(\log
    x)^K}
\ee for $x\ge x_0$, which implies the desired conclusion (since $c'>c$). By the
prime number theorem and the fact that $K>1$, if $x_0$ is large
enough then \eqref{Prattlow2} holds for $x_0 \le x \le x_0^3$.
Suppose $y\ge x_0^3$ and \eqref{Prattlow2} holds for $x_0 \le x \le
y$.  Assume $y<x\le 2y$ and put $I = \mathcal{P} \cap
(x^{\th'},x^{\th}]$.
  Suppose that $x/2<p\le x$, and that $q|p-1$, where
$q\in I$.  Then
\begin{align*}
H(p) &\ge 1 + H(q) \ge 1 + c' \log_2 q - c'' \\
&\ge 1+ c' \log_2 x + c' \log \th' - c'' > c'\log_2 p - c'',
\end{align*}
so that $p\in \mathcal{P}$.
For $x/2<p\le x$, $p-1$ is divisible by at most two primes from
  $I$, hence
\[
Q(x) \ge \frac12 \sum_{q\in I} \Big( \pi(x;q,1)-\pi(x/2;q,1) \Big) 
\ge \frac{x}{4\log x} \sum_{q\in I} \frac{1}{q} + O\pfrac{x}{(\log x)^{K+1}}.
\]
Since \eqref{Prattlow2} holds for $x^{\th'} < y \le x^{\th}$, the sum
on $q\in I$ is
\[
\ge \sum_{2^j \le
    x^{\th-\th'}} \frac{Q(2^j x^{\th'})}{2^j x^{\th'}}
\ge  \( \frac{(\th-\th')\log x}{\log 2} - 1 \) \frac{1}{(\log
  x^{\th})^K}
\ge \frac{\th-\th'}{\th^K(\log x)^{K-1}}
= \frac{8}{(\log x)^{K-1}}.
\]
Therefore, \eqref{Prattlow2} holds.  By induction on dyadic intervals,
\eqref{Prattlow2} holds for all $x\ge x_0$.
\end{proof}

{\bf Remark.}  The same proof gives,
assuming that \eqref{BVQ} holds with $Q=x^{1-\eps(x)}$ and $R(x)=x(\log x)^{-g(x)}$
where $\eps(x)\to 0$ and $g(x)\to\infty$ as $x\to \infty$, that $H(p) \ge
h(p) \log_2 p$ for infinitely many $p$, where $h(p)\to \infty$ as
$p\to \infty$ (the function $h$ depending on the functions $\eps, g$).


\begin{proof}[Proof of Theorem \ref{Prattlower} (a)]
We proceed by induction as in part (b), but instead we iterate by
many levels in the chain at once rather than one level at a time.
Suppose that
$c < h < c' < 1/(\er^{-1} - \log(\th)).$
For some constant $c''$, described below, let
$\PP=\{p : H(p)\ge c'\log_2 p - c''\}.$
We will show, for some $\delta>0$, that
\be\label{PPx}
\PP(x) := | \{ p\le x : p\in \PP \}| \ge \delta \frac{x}{\log x}.
\ee
Consequently, a positive proportion of primes $p$ satisfy $H(p) > h
\log_2 p$, and Theorem \ref{Prattlower} (a) follows from Theorem \ref{tightleft}.

By Stirling's formula, there is
  an integer $k\ge 2$ such that
$1/c' > \frac{1}{k}(k!)^{1/k} - \log (\th)$.
Let $\a,\b$ satisfy
$\er^{-k/c'} < \b < \th^k \exp \( - (k!)^{1/k} \)$
and $\b \exp \( (k!)^{1/k} \) < \a < \th^k.$
Suppose that $\delta$ is sufficiently small, depending only on the
choice of $c', \th, k, \a, \b$.  Let $x_0$ be sufficiently large,
depending on $c', \th, k, \a, \b, \del$, and put $c'' = c' \log_2
(x_0)$. Observe that \eqref{PPx} holds trivially for $2 \le x \le
x_0$, provided $\del$ is small enough.  Throughout this proof,
constants implied by the $O-$ and $\ll-$symbols may depend on $c',
\th, k, \a, \b$, but not on $\del$.

Next, suppose that $Y\ge x_0$ and that inequality \eqref{PPx} holds for $2 \le
x\le Y$.  Let $S$ be a subset of the primes in $\mathcal{P} \cap
[Y^\b, Y^{\th^k}]$.
Let $M(S)$ be the number of primes $p_0\in (Y,2Y]$ so
that there is a prime chain
$p_k \prec p_{k-1} \prec \cdots \prec p_0$
with $p_k \in S$.  For such $p_0$, we have
\begin{align*}
H(p_0) &\ge k + H(p_k) 
\ge k + c'\log_2 p_k - c'' \\
&= c' \log_2 (2Y) - c'' + c' \log \b + k + O\pfrac{1}{\log Y}
\ge c'\log_2 p_0 - c''
\end{align*}
if $x_0$ is large enough.  We will show, for appropriate $S$, that
\be\label{PLN}
M(S) \ge \del \frac{Y}{\log Y},
\ee
which implies
$P(2Y) \ge P(Y) + M(S) \ge 2\del Y/\log (2Y).$
By induction over dyadic intervals, \eqref{PLN} implies
\eqref{PPx}, and hence Theorem \ref{Prattlower} (a).

To prove \eqref{PLN}, we will consider chains satisfying
not only $p_k\in S$, but also
\be\label{PLpj}
p_{j+1} \le p_j^\th \quad (0\le j\le k-1), \quad p_1 \le Y^{\th}.
\ee
With \eqref{PLpj}, we can use \eqref{BVQ} to accurately count such chains.
We have $M(S) \ge M_1(S) - M_2(S),$
where $M_1(S)$ is the number of chains satisfying
$p_k \in S$ and \eqref{PLpj}, and $M_2(S)$ is the number of
pairs of distinct chains satisfying these conditions with the same
$p_0$.  We begin with
\[
M_1(S) = \sum_{p_k\in S} \sum_{p_{k-1}} \cdots \sum_{p_1} \biggl(
\pi(2Y,p_1,1)-\pi(Y,p_1,1) \biggr),
\]
where $p_k\prec \cdots \prec p_0$ and \eqref{PLpj} in the summations.
By  \eqref{BVQ} and induction on $1\le j\le k$,
\be\label{N1j}
M_1(S) =
\frac{Y}{\log Y} \sum_{p_k} \sum_{p_{k-1}} \cdots \sum_{p_j} \frac{\big(\log_2
  Y^{\th^j} - \log_2 p_j \big)^{j-1}}{p_j (j-1)!} + o\pfrac{Y}{\log Y}.
\ee
Here, we used that for each $p_j$, there are $O(1)$ chains
 $p_{k} \prec \cdots \prec p_j$ with $p_k \ge Y^{\b}$.
By \eqref{N1j} with $j=k$,
\begin{align*}
M_1(\mathcal{P} \cap [Y^{\b},Y^{\a}]) &\ge 
\frac{\del Y}{\log Y}
 \int_{Y^\b}^{Y^{\a}} \frac{\( \log_2 Y^{\a}-\log_2 t
  \)^{k-1}}{(k-1)! t\log t}\, dt + o\pfrac{Y}{\log Y} \\
&= \frac{\del Y}{\log Y} \frac{(\log (\a/\b))^k}{k!} + o\pfrac{Y}{\log Y}.
\end{align*}
By hypothesis, $\log(\a/\b) > (k!)^{1/k}$.  The summands
in \eqref{N1j} (with $j=k$) are $\asymp 1/p_k$ for $Y^\b \le p_k \le
Y^\a$.  Hence,
if $\del$ is small enough, there is a set $S\subseteq \mathcal{P} \cap
[Y^\b,Y^\a]$ such that
\be\label{N1lower}
\sum_{p_k\in S} \frac{1}{p_k} \ll \del, \qquad
M_1(S) \ge \( \del + \del^{3/2} \) \frac{Y}{\log Y}.
\ee

We have $M_2 = M_{2,0}+\cdots+M_{2,k-1}$
where $M_{2,j}$ counts pairs of coupled chains
$$
\begin{matrix}
p_k \prec \cdots \prec p_{j+1} \\
\vspace{0.7mm} \\
p_k' \prec \cdots \prec p_{j+1}'
\end{matrix}
\, \Big> \, p_j \prec \cdots \prec p_0
$$
with each of the two chains satisfying \eqref{PLpj}, $p_{j+1} \ne
p_{j+1}'$ and $p_k, p_k'\in S$.  We further write $M_{2,j} =
M'_{2,j} + M''_{2,j}$, where $M_{2,j}'$ counts pairs of such chains
with $p_j \le p_{j+1} p_{j+1}' Y^{\del^2}$.  As before, for each
pair $(p_{j+1}$, $p_{j+1}')$, there are $O(1)$ choices for $p_k,
p_k', \ldots, p_{j+2}, p_{j+2}'$.  For $M_{2,0}'$,
$p_1p_1' \ge Y^{1-\del^2}$, and for each $p_0$, there are $O(1)$ choices
for $p_1,p_1'$.  By sieve methods (e.g. Theorem 2.2 of \cite{HR}),
\begin{align*}
M_{2,0}' &\ll
\sum_{1\le k\le Y^{\del^2}} | \{ n\le Y : n\equiv 1 \!\!\! \pmod{k}, P^-(n
(\tfrac{n-1}{k})) >Y^\b \}| \\
&\ll  \sum_{1\le k\le Y^{\del^2}} \frac{Y}{\phi(k)\log^2 Y} 
\ll \frac{\del^2 Y}{\log Y}.
\end{align*}
Here, $P^-(m)$ is the smallest prime factor of $m$.
For $j\ge 1$, an argument similar to that leading to \eqref{N1j}, followed by the same sieve bound, gives
\[
 M_{2,j}' \ll \frac{Y}{\log Y} \sum_{p_{j+1},p_{j+1}'} \sum_{p_j}
\frac{1}{p_j} \ll \sum_{k\le Y^{\del^2}} \; \sum_{\substack{n\le Y, n\equiv 1\!
\!\!\pmod{k} \\ P^-(n(\frac{n-1}{k})>Y^{\b}}} \; \frac{1}{n}\ll \del^2 \frac{Y} 
{\log Y}.
\]

For chains counted by
$M_{2,j}''$, the Brun-Titchmarsh inequality suffices for the
estimations.  When $j\ge 1$ and $p_j$ is given, as before we have
$$
\sum_{p_{j-1}} \cdots \sum_{p_1} \pi(2Y,p_1,1) \ll \frac{Y}{p_j \log Y}.
$$
By partial summation and \eqref{BT}, given $p_{j+1}$ and $p_{j+1}'$,
$$
\sum_{p_j} \frac{1}{p_j} \ll
\frac{1}{p_{j+1} p_{j+1}' \del^2 \log Y} + \frac{\log
  (1/\del)}{p_{j+1} p_{j+1}'} \ll  \frac{\log
  (1/\del)}{p_{j+1} p_{j+1}'}.
$$
For $j+1 \le r \le k-1$, 
\be\label{pr} \sum_{p_r} \frac1{p_r} \ll
\frac{1}{p_{r+1}}, \qquad \sum_{p_r'} \frac{1}{p_r'} \ll
\frac{1}{p'_{r+1}}. 
\ee 
Finally, by \eqref{N1lower}, we arrive at
\be\label{N''} 
M_{2,j}'' \ll \del^2 \log(1/\del) \frac{Y}{\log Y}.
\ee 
In a similar way, when $j=0$, we have by \eqref{BT} and partial
summation,
$$
\sum_{p_1p_1'\le 2Y^{1-\del^2}} \pi(2Y,p_1 p_1',1) \ll
\frac{\log(1/\del)}{p_2 p_2'}.
$$
A second application of \eqref{pr} then gives \eqref{N''} in this case.

Finally, combining our estimates for $M_{2,j}'$ and $M_{2,j}''$, we obtain
$M_2(S) \ll \del^2 \log(1/\del) Y/\log Y.$
Together with \eqref{N1lower}, if $\del$ is small enough then
\eqref{PLN} holds, and this completes the proof.
\end{proof}

%
%
%
\section{Proof of Theorems \ref{Prattupper} and \ref{EGPSconj}}\label{sec:Prattupper}
%
%
%

The proofs of Theorems \ref{Prattupper} and \ref{EGPSconj} rely on the fact
that the largest prime factor of $p-1$ cannot be too large
too often.   At the core is a sieve upper bound for $k$-tuples of
primes which is uniform in $k$, and careful averages of the associated
singular series.  There is a potentially troublesome factor $2^k k!$ in
the sieve estimate, which is partly overcome by observing that
if $H(p)$ is large, then there must be a prime chain in the Pratt
tree for $p$ which is very condensed in a multiplicative sense.

%
%

\begin{lem}\label{sieve}
There is a positive constant $\delta$ so that the following holds.
Let $a_1,\ldots,a_k$ be positive integers, let $b_1,\ldots,b_k$ be
integers with $(a_j,b_j)=1$ for all $j$, 
and let $\xi(p)$ be the number of solutions of
$\prod_{i=1}^k (a_i n+b_i) \equiv 0\pmod{p}$.
If $x\ge 10$, $1\le k\le \delta \frac{\log x}{\log_2 x}$ and
\[
B := \sum_{p} \frac{k-\xi(p)}{p}\log p \le \delta \log x,
\]
then the number of integers $n\le x$ for which $a_1 n + b_1,
\ldots, a_k n + b_k$ are all prime and $>k$ is
\[
\ll \frac{2^k k!}{(\log x)^k} x \SS \cdot \exp \( O\pfrac{kB+k^2\log_2
  x}{\log x} \), \qquad
\SS=\prod_{p} \( 1 - \frac{\xi(p)}{p}\) \(1 - \frac{1}{p}\)^{-k}.
\]
\end{lem}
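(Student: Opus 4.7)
The plan is to apply a $k$-dimensional upper-bound sieve (for instance Selberg's $\Lambda^2$ sieve, or the Rosser--Iwaniec $\beta$-sieve of dimension $k$) to the polynomial $F(n) = \prod_{i=1}^{k}(a_i n + b_i)$. Setting $\mathcal{A} = \{n \le x\}$, $P(z) = \prod_{p\le z} p$, and $z = x^\alpha$ for a small $\alpha>0$ to be optimised, any $n$ counted in the lemma either contributes to the sifted count $S(\mathcal{A},z)$ or belongs to an exceptional set (some $a_in+b_i$ being a prime in $(k,z]$) of size $O(kz)$, which is absorbed into the final error. Because $\mathcal{A}$ has trivial level of distribution, one may take the sieve level $D = x^{1-\eta}$ for small $\eta>0$; the sieve remainder is then dominated by $\sum_{d \le D,\,d\mid P(z)}\xi(d) \ll D\,\tau_k(D)$, and the hypothesis $k \le \delta \log x/\log_2 x$ keeps $\tau_k(D) \le x^{o(1)}$ so this error is swallowed by the main term.

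The sieve then produces $S(\mathcal{A},z) \le x V(z) F(s) + R$, with $V(z) = \prod_{p\le z}(1 - \xi(p)/p)$, $s = \log D/\log z$, and $F(s) = 2^k e^{k\gamma} k!\, s^{-k}(1 + O(k/s))$ in its range of validity. The analytic heart of the proof is the conversion of $V(z)$ into the singular series $\SS$ via the factorisation
\[
V(z) = \SS \cdot \Bigl(\prod_{p\le z}(1-1/p)\Bigr)^{\!k} \cdot \prod_{p>z}\frac{(1-1/p)^k}{1 - \xi(p)/p}.
\]
Mertens' theorem handles the middle factor, contributing $(e^{-\gamma}/\log z)^k \exp(O(k/\log z))$, while for the tail product I would expand the logarithm and apply $\sum_{p>z}(k - \xi(p))/p = O(B/\log z)$ (partial summation from the definition of $B$) together with $\sum_{p>z} k^2/p^2 = O(k^2/z)$. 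Choosing $\alpha$ so that $s\log z \sim \log x$ and assembling, the main term becomes
\[
xV(z)F(s) = \SS \cdot \frac{2^k k!}{(\log x)^k}\, x \cdot \exp\Bigl(O\Bigl(\tfrac{kB + k^2 \log_2 x}{\log x}\Bigr)\Bigr),
\]
where the $kB/\log x$ in the exponent accumulates from the tail-product estimate, and the $k^2\log_2 x/\log x$ collects the $k$-fold Mertens error together with the $O(k/s)$ loss from $F(s)$ (since $s \asymp \log_2 x$ at the optimum).

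The main obstacle I anticipate is the uniform-in-$k$ book-keeping, as standard references state the $k$-dimensional sieve asymptotic $F(s) \sim 2^k e^{k\gamma} k!/s^k$ with implicit constants that may hide $k$-dependence. The cleanest way forward is to reprove the Selberg bound directly for arbitrary $k$: evaluate the weighted sum $J(\sqrt D,z) = \sum_{d\le\sqrt D,\,d\mid P(z)}\prod_{p\mid d}\xi(p)/(p-\xi(p))$ either by Perron/contour integration against the appropriate $\zeta$-product (exploiting $\int_0^1(1-u)^{k-1}du = 1/k!$ to extract the $k!$) or by induction on $k$ with explicit absolute constants in each intermediate approximation. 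The constraint $k \le \delta\log x/\log_2 x$ is exactly what forces each of $kB/\log x$, $k^2\log_2 x/\log x$, and $\log\tau_k(D)/\log x$ to remain $o(1)$, so that all composed errors consolidate into the single stated exponential correction.
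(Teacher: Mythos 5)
Your overall skeleton---an upper-bound sieve applied to the $k$ forms, followed by a uniform-in-$k$ evaluation of the main term in which $B$ controls the tail of the conversion $V(z)\to\SS$---is the right one, and is close to the paper's: the paper applies Montgomery's large-sieve estimate to get the count $\ll x/G(\sqrt x)$ with $G(z)=\sum_{d\le z}\mu^2(d)\prod_{p\mid d}\xi(p)/(p-\xi(p))$, and then re-runs Halberstam--Richert \S 5.3 with explicit constants to obtain $G(z)=(\log z)^k(k!\,\SS)^{-1}\exp\bigl(O((kB+k^2\log_2 z)/\log z)\bigr)$ uniformly in $k$. However, two steps of your assembly fail as written. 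First, the formula $F(s)=2^k\er^{k\gamma}k!\,s^{-k}(1+O(k/s))$ invoked ``at the optimum $s\asymp\log_2 x$'' is not a valid bound for any standard sieve in that range: for a dimension-$k$ sieve, once $s$ exceeds the sieving limit (which grows like $k$) one has $F_k(s)\to 1$, which is far larger than $2^k\er^{k\gamma}k!\,s^{-k}$, and no upper sieve yields a bound below $xV(z)$; the shape $\er^{k\gamma}k!$ belongs to bounded $s$ (Selberg at $s=2$), where the error is certainly not $1+O(k/s)$. In other words, the uniform evaluation of the Selberg sum $G(\sqrt D)$ with error factor $\exp\bigl(O((kB+k^2\log_2 x)/\log x)\bigr)$ \emph{is} the content of the lemma, and your proposal defers exactly that (``reprove by Perron or by induction''); when you carry it out you will essentially reproduce the paper's argument, whose engine is the functional equation $G(z)\log z=(k+1)\int_1^z G(t)t^{-1}\,dt+r(z)G(z)\log z$ with $r(z)\ll (B+k\log_2 z)/\log z$.

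Second, the level choice $D=x^{1-\eta}$ with $\eta$ a fixed constant is too lossy for the stated conclusion: it replaces $(\log x)^{-k}$ by $(\log D)^{-k}=(1-\eta)^{-k}(\log x)^{-k}$, i.e.\ costs a factor $\er^{(1+o(1))\eta k}$, and this is not absorbed by $\exp\bigl(O((kB+k^2\log_2 x)/\log x)\bigr)$ in the intermediate range: e.g.\ for $k\asymp\sqrt{\log x}$ and $B\asymp k\log k$ the permitted correction is only $(\log x)^{O(1)}$ while your loss is $\er^{\eta\sqrt{\log x}}$. To stay within the stated error you must take $\log D=\log x-O(B+k\log_2 x)$, and then the remainder absorption has to be rechecked: the remainder is $\le D\exp(O(k\log_2 x))$ (your bound $\ll D\,\tau_k(D)$ is not correct as stated, though the order of magnitude is similar), and dominating it by the main term requires lower bounds you never supply, such as $\SS\ge \er^{-O(k)}$ when $\SS>0$ (using $\xi(p)\le k$, so the tail factors only increase $\SS$) and $2^kk!(\log x)^{-k}\ge x^{-o(1)}$ throughout $k\le\delta\log x/\log_2 x$. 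This bookkeeping is precisely what the paper sidesteps by using the large-sieve (Montgomery/Bombieri) form of Selberg's method, which has no remainder term at all and therefore allows level $\sqrt x$ outright---that is where the clean factor $2^kk!$ comes from. So the plan is repairable, but as it stands the core uniform estimate is missing and the parameter choices do not deliver the claimed error factor.
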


\begin{proof}
Since $\xi(p)=k$ for
large $p$, $\SS>0$ if and only if
$\xi(p)<p$ for all $p$.  Also, $\xi(p)\le k$ for all $p$.
Hence, if $\SS=0$, the number of $n$ 
is zero.  If $\SS>0$, 
Montgomery's large sieve estimate \cite[Th\'eor\`eme 6]{Bo} implies
that the number of $n$ in question is $\ll x/G(\sqrt{x})$, where
$$
G(z) = \sum_{n\le z} g(n), \qquad g(n)=\mu^2(n) \prod_{p|n}
\frac{\xi(p)}{p-\xi(p)},
$$
and $\mu$ is the M\"obius function.
For fixed $k$, the argument in \cite[\S 5.3]{HR} gives
$G(z) \sim (\log z)^k/(k! \SS)$.  We sketch how to make explicit the dependence on $k$.
By the argument in \cite[p. 147--148]{HR}, 
\begin{align*}
\sum_{d\le z} g(d)\log d
&= \sum_{d\le z} g(d) \sum_{p\le z/d} \frac{\xi(p)\log p}{p} +
\sum_{h\le z} g(h) \sum_{\substack{p|h \\ p>z/h}} \frac{\xi(p)\log
  p}{p} \\
&= k\sum_{d\le z} g(d) \log \frac{z}{d} + O\(G(z)(B+k\log_2 z) \).
\end{align*}
Adding the sum on the right side to both sides yields
\[
G(z) \log z = (k+1) \int_1^z \frac{G(t)}{t}\, dt +  r(z) G(z)\log z,
\] 
where $r(z) \ll \frac{B+k\log_2 z}{\log z}$.  If
$\delta$ is small enough and $z\ge \sqrt{x}$, then $|r(z)| \le
\frac12$.  By the argument in \cite[p. 150]{HR},  for
some constant $D$ and for $z\ge \sqrt{x}$,
\[
(1-r(z))\frac{G(z)}{\log^k z} 
= D \exp \left\{ O\pfrac{kB+k^2\log_2 z}{\log z} \right\}.
\]
By the argument on \cite[p. 151--152]{HR}, $D^{-1} = k! \SS$.
Taking $z=\sqrt{x}$ completes the proof.
\end{proof}

%
%

For given integers $m_1,\ldots,m_{k-1} \ge 2$, we will apply
Lemma \ref{sieve} with the forms 
$f_1(n)=n$, $f_{j+1}(n)=m_{j} f_j(n)+1 \ (1\le j\le k-1).$
We have $f_j(n)=a_j n + b_j$, where
\be\label{ajbj}
 a_j=m_1\cdots m_{j-1} \quad (j\ge 1), \qquad b_1=0,
\quad  b_j=1 + \sum_{i=2}^{j-1} m_i\cdots m_{j-1} \;\; (j\ge 2). 
\ee
Clearly, $(a_j,b_j)=1$.
Let $\SS(\mm)=\SS$ be the associated singular series and let
$\xi(p,\mm)=\xi(p)$.


\begin{lem}\label{singupper}
There is a positive constant $c_1$ so that
$\SS(\mm) \ll (c_1 \log_2 (4m_1\cdots m_{k-1}))^{k-1}$.
Also,
\[
\sum_p \frac{k-\xi(p,\mm)}{p} \log p \le k\( \log_2 (4m_1 \cdots
m_{k-1})+O(1) \).
\]
\end{lem}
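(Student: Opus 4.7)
The proof exploits the telescoping recursion $f_{j+1}(n) = m_j f_j(n) + 1$, which controls the roots of the linear forms $f_i(n) = a_i n + b_i$ modulo each prime $p$. Since $(a_i,b_i)=1$, the form $f_i$ has a root mod $p$ iff $p \nmid a_i$, in which case the root is $r_i := -b_i/a_i \pmod{p}$. Iterating the recursion yields $r_i - r_j \equiv -a_i^{-1} C_{j,i} \pmod{p}$ for $j < i$, where
\[
C_{j,i} \;:=\; 1 + m_{i-1} + m_{i-1}m_{i-2} + \cdots + m_{i-1}\cdots m_{j+1} \;\le\; (i-j)\,m_{j+1}\cdots m_{i-1}.
\]
Thus $f_i$ and $f_j$ share a root mod $p$ iff $p \mid C_{j,i}$, and consequently
\[
k - \xi(p,\mm) \;\le\; \bigl|\{i : p \mid a_i\}\bigr| + \bigl|\{i \ge 2 : \exists\, j<i,\ p \mid C_{j,i}\}\bigr|.
\]

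For the second claim, I first swap summations on the ``missing form'' piece:
\[
\sum_p \bigl|\{i : p \mid a_i\}\bigr|\frac{\log p}{p} \;=\; \sum_i \sum_{p \mid a_i} \frac{\log p}{p} \;\le\; \sum_i \bigl(\log_2 a_i + O(1)\bigr) \;\le\; k\,\log_2 M + O(k),
\]
using the Mertens-type bound $\sum_{p\mid n}(\log p)/p \le \log_2 n + O(1)$ (proved by splitting primes at $\log n$) together with $a_i \le M := m_1 \cdots m_{k-1}$. For the collision piece, I bound the $i$-th indicator by $\mathbf{1}[p \mid P_i]$, where $P_i := \prod_{j<i} C_{j,i}$; this contributes at most $\sum_{i\ge 2}(\log_2 P_i + O(1))$. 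The size estimate on $C_{j,i}$ gives $\log P_i = O(k\log(kM))$, so $\log_2 P_i \le \log_2 M + O(\log k)$, and the $O(\log k)$ loss is absorbed via $M \ge 2^{k-1}$, which implies $\log k \le \log_2 M + O(1)$. Summing over $i \le k$ gives $O(k\log_2 M)$; together with the first piece this yields the target $k(\log_2(4M) + O(1))$.

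For the first claim, take logarithms: $\log \SS(\mm) = \sum_p[\log(1 - \xi/p) - k\log(1-1/p)]$. For $p > 2k$, Taylor expansion gives the summand $= (k-\xi)/p + O(k^2/p^2)$, with tail error $O(k^2\cdot 1/(2k)) = O(k)$. For $p \le 2k$, I use the crude bound summand $\le -k\log(1-1/p) \le k/(p-1)$, which sums to $O(k\log_2 k)$. Hence $\log \SS \le \sum_p (k-\xi)/p + O(k\log_2 k)$. Repeating the earlier decomposition with $1/p$ in place of $(\log p)/p$, and using the analogue $\sum_{p\mid n} 1/p \le \log_3 n + O(1)$, produces $\sum_p(k-\xi)/p = O(k\log_3 M)$. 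Since $M \ge 2^{k-1}$ gives $\log_2 k \le \log_3 M + O(1)$, the small-prime error is absorbed, and $\log \SS \le O(k\log_3 M) + O(k) = (k-1)\log_3(4M) + O(k)$, yielding $\SS(\mm) \ll (c_1 \log_2(4M))^{k-1}$ for an absolute constant $c_1$.

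The main obstacle is constant-tightness: both the $O(\log k)$ loss in $\log_2 P_i$ and the small-prime $O(k\log_2 k)$ contribution to $\log \SS$ must be absorbed into the leading $k\log_2 M$ (resp.\ $k\log_3 M$) term. Both absorptions rely on the inequality $\log k = O(\log_2 M)$, which in turn comes from the essential hypothesis $m_j \ge 2$ (so $M \ge 2^{k-1}$).
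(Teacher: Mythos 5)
Your structural analysis is sound: the collision moduli $C_{j,i}$ you introduce are exactly the quantities $g_{j,h}(\mm)$ of \eqref{gjh}, the inequality $k-\xi(p,\mm)\le|\{i:p\mid a_i\}|+|\{i\ge 2:\exists j<i,\ p\mid C_{j,i}\}|$ is valid, and your treatment of the second claim is essentially fine. (Strictly, writing $M=m_1\cdots m_{k-1}$, your bound comes out as $\ll k\log_2(4M)$ rather than with the literal leading constant $1$ of the statement, but that is the same strength the paper's own one-line argument via $N=m_1\cdots m_{k-1}\prod_{i<j}|a_ib_j-a_jb_i|$ delivers, since $\log_2 N\le\log_2 M+2\log k+O(1)$, and only $\ll k\log_2 x$ is ever used.)

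The first claim, however, has a genuine gap at the final step. Your decomposition gives $\log \SS(\mm)\le C\,k\log_3 M+O(k)$ with an implied constant $C$ strictly bigger than $1$: the missing-root piece and the collision piece each contribute up to about $(k-1)\log_3 M$, and the primes $p\le 2k$ add another $O(k\log_2 k)=O(k\log_3 M)$. The asserted simplification $O(k\log_3 M)+O(k)=(k-1)\log_3(4M)+O(k)$ is false once $C>1$: a constant factor sitting in the \emph{exponent} cannot be absorbed into the base constant $c_1$, because $(\log_2 4M)^{Ck}$ is not $\ll(c_1\log_2 4M)^{k-1}$ for any fixed $c_1$ when $C>1$ and $M\to\infty$. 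So your argument proves $\SS(\mm)\ll(c\log_2 4M)^{O(k)}$, not the stated exponent $k-1$. (That weaker form would in fact still suffice for Theorem \ref{condensed_chain}, where $\SS$ only enters as $\exp\{O(k_j\log_3 x)\}$, but it is not the lemma as stated.) The way to keep the exponent is to stay multiplicative instead of taking logarithms and splitting: since $f_1(n)=n$, one has $\xi(p,\mm)\ge 1$ for every $p$, so each local factor is at most $(1-1/p)^{1-k}$, while for $p\nmid N$ one has $\xi(p,\mm)=k$ and the local factor is $\le 1$ by $1-k/p\le(1-1/p)^k$. Hence $\SS(\mm)\le\prod_{p\mid N}(1-1/p)^{1-k}$, and Mertens' theorem gives $\prod_{p\mid N}(1-1/p)^{-1}\ll\log_2 N\ll\log_2(4M)$ because $\log N\ll k^2\log M\ll\log^3 M$; all the constant-factor slop then lands inside the base, and the exponent $k-1$ survives.
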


\begin{proof}  
We have $\xi(p,\mm)=k$ if $p\nmid N$, where
$N = m_1 \cdots m_{k-1} \prod_{i<j} |a_i b_j - a_j b_i|.$
Also, $\xi(p,\mm)\ge 1$ for all $p$.
Let $x=m_1 \cdots  m_{k-1} \ge 2^{k-1}$.  
By \eqref{ajbj}, $a_j\le x$ and
$b_j \le 1 + \sum_{j=1}^{k-2} x/2^j \le x$ for each $j$.  Thus,
$N\le x^{k(k-1)+1} \le \exp \{ O(\log^3 x) \}$.  Since
$1-k/p \le (1-1/p)^k$ for $p>k$, if $\SS(\mm)>0$ then 
$\SS(\mm)\le \prod_{p|N}(1-1/p)^{1-k}\le (c_1\log_2 N)^{k-1}$ for a constant
$c_1$.  The second bound follows from $\sum_{p|N} (\log p)/p \le \log_2 N+O(1)$.
\end{proof}


\begin{lem}\label{rhopm}  Let $k\ge 1$ and $\II \subseteq
  \{1,\ldots,k-1\}$.  If the variables $m_i$ are fixed $(1\le i\le
  k-1, i\not\in \II)$, then for any prime $p$,
$$
\sum_{0\le m_i <p \; (i\in \II)} \xi(p,(m_1,\ldots,m_{k-1})) \ge
p^{|\II|+1} - (p-1)^{|\II|+1}.
$$
\end{lem}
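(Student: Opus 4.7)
The plan is to interpret $\sum_{0 \le m_i < p,\, i\in\II}\xi(p,\mm)$ as a count of pairs $(n,(m_i)_{i\in\II})$ with $n\in\ZZ/p\ZZ$ and each $m_i\in\{0,\ldots,p-1\}$, where the pair is included iff $f_j(n)\equiv 0\pmod p$ for at least one $1\le j\le k$. The total number of pairs is $p^{|\II|+1}$, so proving the lemma reduces to showing that the number of \emph{bad} pairs --- those with $f_j(n)\not\equiv 0\pmod p$ for every $j$ --- is at most $(p-1)^{|\II|+1}$.

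To bound the bad pairs, set $v_j := f_j(n)\bmod p$, so that $v_1=n$ and the recursion $f_{j+1}(n)=m_jf_j(n)+1$ becomes $v_{j+1}=m_jv_j+1$; badness is then exactly $v_1,\ldots,v_k$ all nonzero. The initial value $v_1=n$ is nonzero for at most $p-1$ residues. Processing $j=1,2,\ldots,k-1$ in turn: if $v_j\ne 0$ and $j\in\II$, then $v_{j+1}=m_jv_j+1$ is an affine function of $m_j$ with nonzero slope, so exactly one choice of $m_j\in\{0,\ldots,p-1\}$ forces $v_{j+1}=0$, leaving $p-1$ admissible (bad) choices; if $v_j\ne 0$ and $j\notin\II$, then $m_j$ is already fixed and $v_{j+1}$ is determined, contributing a factor of at most $1$ to the running count. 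In either branch, once some $v_{j+1}$ happens to vanish, no extension can be bad, so the per-step bounds compose correctly.

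Multiplying the per-step bounds gives at most $(p-1)\cdot\prod_{j\in\II}(p-1)=(p-1)^{|\II|+1}$ bad pairs, and subtracting from $p^{|\II|+1}$ yields the claim. The argument is elementary and I foresee no real obstacle; the crucial observation is simply that the affine map $m_j\mapsto m_jv_j+1$ with $v_j\ne 0$ has exactly one root mod $p$, which is what converts the $|\II|$ free parameters into a factor of $(p-1)^{|\II|}$ rather than $p^{|\II|}$.
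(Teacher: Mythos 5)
Your argument is correct. Where you differ from the paper is in the bookkeeping: you count the complement directly, bounding the number of ``bad'' tuples $(n,(m_i)_{i\in\II})$ with $f_j(n)\not\equiv 0\pmod p$ for every $j$ by $(p-1)^{|\II|+1}$ via a single sequential pass, whereas the paper proceeds by induction on $k$, splitting on whether $k-1\in\II$ and deriving the exact recursion $N(k,\II)=p^{|\II|}+(p-1)\,N(k-1,\II\setminus\{k-1\})$ before invoking the inductive hypothesis. The key observation is the same in both treatments --- for $v_j\not\equiv 0$ the affine map $m_j\mapsto m_jv_j+1$ has exactly one root mod $p$, and when $j\notin\II$ the fixed $m_j$ contributes a factor at most $1$ (the paper's analogue being the monotonicity $\xi(p,(m_1,\ldots,m_{k-1}))\ge\xi(p,(m_1,\ldots,m_{k-2}))$). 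Your version buys a shorter, case-free argument and a transparent interpretation of the term $(p-1)^{|\II|+1}$ as a bound on the bad tuples; the paper's version yields the intermediate identity, which is slightly more informative but not needed for the stated inequality. Your parenthetical remark that once some $v_{j+1}$ vanishes no extension can be bad is exactly the point that makes the per-step bounds compose, since a fixed $m_j$ with $j\notin\II$ could force $v_{j+1}=0$, in which case that branch simply contributes nothing to the bad count; as written, your proof handles this correctly.
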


\begin{proof}   Fix $p$ and let $N(k,\II)$ be the sum on the left side. 
We use induction on $k$, the case $k=1$ being trivial.  Suppose $k\ge 2$
and the lemma holds with $k$ replaced by $k-1$.  If $k-1 \not\in \II$, then
$\xi(p,(m_1,\ldots,m_{k-1})) \ge \xi(p,(m_1,\ldots,m_{k-2}))$ implies
$N(k,\II) \ge N(k-1,\II)$.  If $k-1 \in \II$, then $N(k,\II)$ counts the number of 
$(|\II|+1)-$tuples $(m_i (i\in \II), n)$ modulo $p$ with
$p|f_1(n)\cdots f_{k-1}(n)(m_{k-1}f_{k-1}(n)+1)$.  The number of tuples with $p|f_1(n)\cdots f_{k-1}(n)$ is $p N(k-1,\II-\{k-1\})$ and the number of remaining tuples is $p^{|\II|}-N(k-1,\II-\{k-1\})$.  By the inductive hypothesis,
$N(k,I) = p^{|\II|} + (p-1) N(k-1,\II-\{k-1\}) \ge p^{|\II|+1} - (p-1)^{|\II|+1}.$
\end{proof}

%
%

\begin{lem}\label{singular}  Let $k\ge 4$, and suppose that $M_i,N_i$ are
  integers satisfying $M_i\ge 2$ and
$2\le N_i \le 2kM_i$ for $1\le i\le k-1$.  For some positive constant $c_2$, we have
\[
\sum_{\substack{N_i < m_i \le N_i+M_i \\ (1\le i\le k-1)}}
\SS(\mm) \ll M_1 \cdots M_{k-1} \( c_2 \log k \)^{b}
\exp \left\{ O\pfrac{k \log_2 k}{\log k} \right\},
\]
where $b$ is the number of variables $M_i$ which are $\le 2^{k^2\log^3 k}$.
\end{lem}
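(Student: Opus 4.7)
The plan is to bound the local factors of the Euler product $\SS(\mm) = \prod_p \omega_p(\mm)$, where $\omega_p(\mm) = (1-\xi(p,\mm)/p)(1-1/p)^{-k}$, and then sum over $\mm$ by exploiting Chinese Remainder averaging to extract savings from the ``free'' coordinates, namely those $i$ for which $M_i$ is very large. Let $\II = \{i \in \{1,\dots,k-1\} : M_i > 2^{k^2\log^3 k}\}$, so that $|\II^c| = b$ and $|\II| = k-1-b$.

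For a truncation threshold $P$ polynomial in $k$, the tail $\prod_{p>P}\omega_p(\mm)$ is controlled uniformly as follows. For $p > k$ one has $\log\omega_p(\mm) \le (k-\xi(p,\mm))/p + O(k^2/p^2)$, so partial summation combined with the second inequality of Lemma~\ref{singupper} yields
\[
\sum_{p > P} \log\omega_p(\mm) \ll \frac{k\log_2(\prod m_i)}{\log P} + O\!\left(\frac{k^2}{P}\right),
\]
which is $O(k\log_2 k/\log k)$ for $P$ appropriately chosen.

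For the small-prime part, fix the restricted coordinates $\mm_{\II^c}$ and set $Q = \prod_{p \le P}p$. Since $M_i \gg Q$ for $i \in \II$, the inner sum over $\mm_\II$ equals, up to a negligible boundary error, the product $\prod_{i\in\II}M_i$ times the average of $\prod_{p\le P}\omega_p(\mm)$ over $\mm_\II\in(\ZZ/Q\ZZ)^{|\II|}$. By CRT this average factors as $\prod_{p\le P}A_p(\mm_{\II^c})$, where $A_p$ is the per-prime average over $\mm_\II\!\pmod p$. Lemma~\ref{rhopm} gives
\[
A_p(\mm_{\II^c}) = \left(1-\tfrac{1}{p}\right)^{-k}\!\left(1 - \frac{1}{p^{|\II|+1}}\!\sum_{\mm_\II\in(\ZZ/p\ZZ)^{|\II|}}\!\xi(p,\mm)\right) \le \left(1-\tfrac{1}{p}\right)^{-b}
\]
(using $|\II|+1-k=-b$), so $\prod_{p\le P}A_p \ll (\log P)^b \ll (\log k)^b$ by Mertens' theorem. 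Summing over $\mm_{\II^c}\in\prod_{i\in\II^c}(N_i, N_i+M_i]$ contributes $\prod_{i\in\II^c}M_i$, and combining with the tail estimate gives the claimed bound.

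The main obstacle is calibrating $P$ against both constraints simultaneously: $\log P$ must be large enough that the tail estimate is $O(k\log_2 k/\log k)$, yet $P$ must be small enough that $Q < M_i$ for every $i \in \II$, so $P \lesssim k^2\log^3 k$. When $\max_i M_i$ is enormous these conditions can conflict, and the likely resolution is a multi-tier decomposition: handle primes below $k^{O(1)}$ by full CRT averaging, primes in an intermediate range by partial averages over those free variables whose $M_i$ is individually large enough to smooth that single prime, and primes beyond $\max_i M_i$ by the trivial tail bound (which holds uniformly there, since the discriminant contribution is concentrated below that scale).
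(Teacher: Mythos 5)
Your small-prime step is sound and coincides with the paper's: averaging over the coordinates in $\II$ modulo $\prod_{p\le P}p$ and invoking Lemma \ref{rhopm} does give a per-prime factor $(1-1/p)^{-b}$ and hence $(c\log P)^b$, with an acceptable boundary error because the moduli are small compared with the $M_i$, $i\in\II$. The proof, however, stands or falls on your claim that the primes $p>P$ contribute $\exp\{O(k\log_2 k/\log k)\}$ \emph{uniformly in} $\mm$, and that claim is not salvageable. The CRT step forces $\prod_{p\le P}p$ to be smaller than every $M_i$ with $i\in\II$, i.e. $P\ll k^2\log^3 k$ and $\log P\asymp \log k$; partial summation of Lemma \ref{singupper} then only gives $\exp\{O(k\log_2(m_1\cdots m_{k-1})/\log k)\}$, and $\log_2(m_1\cdots m_{k-1})$ is not bounded in terms of $k$ (in the application the $m_i$ range up to roughly $x$ while $k$ can be as small as a bounded multiple of $l$). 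This is not a defect of the bookkeeping: pointwise, $\SS(\mm)$ genuinely attains size $(c\log_2 (m_1\cdots m_{k-1}))^{k-1}$ for suitably chosen $\mm$, so \emph{no} bound on the medium primes that is uniform in $\mm$ can yield the stated lemma; the medium-prime factors must themselves be averaged over the free variables. That averaging is the actual content of the paper's proof and is missing from yours.

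Your closing multi-tier sketch does not supply it. The dominant medium-prime contribution comes not from $p\mid m_i$ but from primes dividing the resultants $a_jb_h-a_hb_j=m_1\cdots m_{j-1}\,g_{j,h}(\mm)$, and the $g_{j,h}(\mm)$ can be as large as $\prod_i m_i$, so their prime factors are in no way ``concentrated below $\max_i M_i$'' (the tier $p>\max_i M_i$ does turn out to be harmless, but only because such primes divide the discriminant to very low multiplicity and each contributes $1+O(k/p)$). To average a factor attached to $p\mid g_{j,h}(\mm)$ over a single large variable one needs the linear structure $g_{j,h}(\mm)=m_l(m_{l+1}\cdots m_{h-1})g_{j,l}(\mm)+g_{l,h}(\mm)$ for the largest intermediate index $l$, together with some device to decouple the $\asymp k^2$ mutually dependent conditions indexed by pairs $(j,h)$ that share variables: the paper does this with H\"older's inequality with exponents tuned to $L\asymp\log k$, bounds blocks of small variables pointwise by an absolute constant per block (whence the factor $C^b$, later absorbed into $(c_2\log k)^b$), and only then performs the CRT average for $p\le r$. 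Without an argument of this type for the resultant primes in the range $k^{O(1)}<p\le\exp\{O(k^2\log^3k)\}$ and beyond, your proposed proof has a genuine gap at its central step.
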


\begin{proof}
Let $L=\fl{\log k}+1$ and $r=k^2L$. We will perform a precise
averaging of the factors in $\SS(\mm)$ for primes $p\le r$, and
use crude estimates for larger $p$.
If $p \nmid m_1\cdots m_{k-1}$, each congruence $f_j(n)\equiv
0\pmod{p}$ has exactly one solution.
For $h>j$, $f_j(n)\equiv 0\pmod{p}$ and $f_h(n)\equiv 0\pmod{p}$ have
a common solution if and only if $p|(a_j b_h-a_h b_j)$.
Write
\be\label{gjh}
a_j b_h - a_h b_j = m_1 \cdots m_{j-1} g_{j,h}(\mm), \qquad
g_{j,h}(\mm) := 1 + \sum_{i=j+1}^{h-1} m_i \cdots m_{h-1}.
\ee
Define
$$
\psi_r(n) = \prod_{\substack{p|n \\ p>r}} \frac{p}{p-1}, \qquad
G_{j,h}(\mm) = \!\!\!\!\!\prod_{\substack{p>r, p|g_{j,h}(\mm) \\ p\nmid
    g_{i,h}(\mm) \; (j+1\le i\le h-2) \\ p\nmid m_1\cdots m_{k-1}}}
\!\!\!\!\!\!\!\! p.
$$
We then have
\[
\prod_{\substack{p\nmid m_1\cdots m_{k-1}\\p>r}} \(1-\frac{\xi(p,\mm)}{p}\)
\(1-\frac1{p}\)^{-k} \le \prod_{\substack{1\le j<h\le k-1
    \\ h\ge j+2}} \psi_r(G_{j,h}(\mm)).
\]
Let 
\[
\mathcal{J}=\{(j,h):1\le j<h\le k-1, h\ge j+2, \max_{j+1\le i\le h-1} M_i 
> 2^{k^2\log^3 k} \},
\]
and put $J=|\mathcal{J}| \le \frac{(k-3)(k-2)}{2}.$
Also let $\II=\{ i: M_i > 2^{k^2\log^3 k} \}$.  Write
$\II=\bigcup_{a=1}^A ([i_a,i'_a]\cap \NN)$, where $i_{a+1}\ge i'_a+2$ for each $a$.
For each $a$ and $2+i'_a \le h\le i_{a+1}$ (so $h\not\in \II$),
\[
\prod_{i_a' \le j\le h-2} \psi_r(G_{j,h}(\mm)) =\psi_r(G_h),
\quad G_h=\prod_{i_a' \le j\le h-2} G_{j,h}(\mm). 
\]
Since $G_h\le (k 2^{k^2\log^3 k})^k$, $G_h$ has $O(k^3\log^3 k)$ prime factors,
and thus for some constant $C>1$,
$\psi_r(G_h)\le \exp \{ \sum_{p|G_h,p>r} \frac{1}{p-1} \} \le C$.
There are $b$ such numbers $h$.  Hence, by H\"older's inequality,
\be\label{Holder}
\begin{split}
\sum_{\mm} \SS(\mm) &\le C^b \sum_{\mm} \prod_{p\le r}
  \(1-\frac{\xi(p,\mm)}{p} \) \(1-\frac{1}{p}\)^{-k} \;\;
  \prod_{i=1}^{k-1} \psi_r(m_i)^{k-1} \prod_{(j,h)\in\mathcal{J}}
  \psi_r(G_{j,h}(\mm)) \\
&\le C^b \biggl( \sum_{\mm} \biggl[ \prod_{p\le r}  \(1-\frac{\xi(p,\mm)}{p}
  \) \(1-\frac{1}{p}\)^{-k} \biggr]^{\frac{L}{L-1}}
  \biggr)^{1-\frac{1}{L}}\\
&\qquad \times  \prod_{i=1}^{k-1} \( \sum_{\mm}
  \psi_r(m_i)^{2L(k-1)^2} \)^{\frac{1}{2L(k-1)}} \prod_{(j,h)\in\mathcal{J}}
 \( \sum_{\mm} \psi_r(G_{j,h}(\mm))^{2JL} \)^{\frac{1}{2JL}}.
\end{split}
\ee
If we write $\psi_r^s = 1 \ast \beta_s$,
then $\beta_s$ is multiplicative and supported on square-free
integers composed of primes $>r$.  Furthermore, if $p>r\ge s+1$, then
\be\label{betas}
\beta_s(p) = \pfrac{p}{p-1}^s - 1 \le \er^{s/(p-1)}-1 \le \frac{4s}{p}.
\ee
Thus, for each $i$,
\be\label{Holder2}
\begin{split}
\sum_{N_i < m_i\le N_i+M_i} \psi_r(m_i)^{2L(k-1)^2} &\le \sum_{d\le
  N_i + M_i}
  \beta_{2L(k-1)^2}(d) \frac{N_i + M_i}{d} \\
&\le (2k+1)M_i \prod_{p>r} \(1 + \frac{\beta_{2L(k-1)^2}(p)}{p} \) 
\ll kM_i.
\end{split}
\ee
For fixed $(j,h)\in\mathcal{J}$, let $M_l=\max
(M_{j+1},\ldots,M_{h-1}) > 2^{k^2\log^3 k}$ and write
\be\label{gjhl}
g_{j,h}(\mm) = m_l (m_{l+1}\cdots m_{h-1}) g_{j,l}(\mm) + g_{l,h}(\mm).
\ee
We'll use
$$
G_{j,h}(\mm) | G_{j,h}'(\mm) := \prod_{\substack{p>r, \,
p|g_{j,h}(\mm) \\ p\nmid m_1\cdots m_{k-1} g_{l,h}(\mm)}} p,
$$
and note that $G_{j,h}'(\mm) \le g_{j,h}(\mm) \le k (2k+1)^k M_{j+1}\ldots
M_{h-1} \le (6kM_l)^{k}$ by \eqref{gjh}. 
Fix all of $m_{j+1},\ldots,m_{h-1}$ except for $m_l$.
By \eqref{betas} and \eqref{gjhl},
\be\label{Holder3}
\begin{split}
\sum_{m_l} \psi_r(G_{j,h}'(\mm))^{2JL} &= \!\!\!\! \sum_{\substack{d\le
   (6kM_l)^{k} \\ (d,g_{l,h}(\mm) \prod_{i\ne l} m_i)=1}}
   \!\!\!\!\! \beta_{2JL}(d) \sum_{\substack{N_l<m_l\le N_l+M_l \\
d|G_{j,h}'(\mm)}} 1 
\le \sum_{d\le (6kM_l)^{k}} \( \frac{M_l}{d} + 1 \) \beta_{2JL}(d) \\
&\le M_l \prod_{p>r} \( 1 + \frac{8JL}{p^2} \) +
  \prod_{r<p\le (6kM_l)^{k}} \( 1 + \frac{8JL}{p} \) \ll M_l.
\end{split}\ee
Also,
\[
\biggl[ \prod_{p\le r}
  \(1-\frac{\xi(p,\mm)}{p} \) \(1-\frac{1}{p}\)^{-k}
  \biggr]^{\frac{L}{L-1}-1} \le \prod_{p\le r}
  \(1-\frac1{p}\)^{-\frac{k}{L-1}} 
=\exp\left\{ O\pfrac{k\log_2 k}{\log k} \right\}.
\]
Therefore, by \eqref{Holder}, \eqref{Holder2} and \eqref{Holder3},
\[
\sum_{\mm} \SS(\mm) \ll C^b \(M_1\cdots M_{k-1}\)^{\frac{1}{L}}
S^{1-\frac{1}{L}} \exp\left\{ O\pfrac{k\log_2 k}{\log k} \right\},
\]
where
\[
S = \sum_{\mm} \prod_{p\le r}
  \(1-\frac{\xi(p,\mm)}{p} \) \(1-\frac{1}{p}\)^{-k}.
\]
Let $M'=\prod_{p\le r} p$.
Since $M' \le \er^{2r}$,  for each $i\in\II$,
$M_i \gg k M'$.  Hence,
 the number of $m_i\in (N_i,N_i+M_i]$ lying in a given residue class modulo $M'$
is $\le M_i/M'+1 \le (1+O(1/k))M_i/M'$.
Thus, by Lemma \ref{rhopm} and the Chinese Remainder Theorem,
\begin{align*}
S &\le \sum_{\substack{N_i < m_i \le N_i+M_i \\ (i\not\in \II)}} \prod_{i\in \II}
  \frac{M_i}{M'} \(1+O\pfrac{1}{k} \) \sum_{\substack{m_i\bmod M' \\
      (i\in \II)}}
  \prod_{p\le r} \(1-\frac{\xi(p,\mm)}{p} \) \(1-\frac{1}{p}\)^{-k} \\
&\ll \prod_{i\in \II} M_i \sum_{\substack{N_i < m_i \le N_i+M_i \\
    (i\not\in \II)}}
  \prod_{p\le r} \frac{1}{p^{|\II|}} \pfrac{p}{p-1}^k \biggl[
  p^{|\II|} - \frac{1}{p} \sum_{\substack{0\le m_i<p\\ (i\in \II)}} \xi(p,\mm)
  \biggr] \\
&\ll M_1 \cdots M_{k-1} \prod_{p\le r} \pfrac{p}{p-1}^{k-1-|\II|}.
\end{align*}
Noting that $b=k-1-|\II|$, the lemma follows from Mertens' estimate.
\end{proof}

%
%

\begin{thm}\label{condensed_chain}
Suppose that $\eta>0$, $r\ge 1$, and $l$ and $x$ are
sufficiently large as a function of $\eta$.  There are 
\[
\ll \frac{x}{\log x} (2\eta \er^{1+\eta})^{l/2}+\sum_{j=1}^r x(\log_2 x)^{O(jl)}
\pfrac{(jl)^{3+\eta}}{\log x}^{\lfloor jl/(\log_2 jl)^2 \rfloor}
\]
primes $p\le x$, such that there is a prime chain $p_{rl} \prec p_{rl-1} \prec
\cdots \prec p_0=p$ with $p_{rl}>x^{(r+1)^{-\eta}}$. 
\end{thm}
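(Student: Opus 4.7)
The plan is to bound the number of primes $p\le x$ admitting such a chain by the number of admissible chains, and then to apply Lemma 5.1 to a carefully chosen sub-chain of length far less than $rl+1$. Writing $p_j=m_jp_{j+1}+1$ for $0\le j\le rl-1$, any chain is determined by the pair $(n,\mm)$ with $n=p_{rl}$ and $\mm=(m_0,\ldots,m_{rl-1})$; after the relabelling of (5.2)--(5.3), we obtain $k=rl+1$ forms $f_i(n)=a_in+b_i$. Applying Lemma 5.1 to all $k$ forms at once is not viable: $k$ may exceed $\delta\log x/\log_2 x$, and the factor $2^kk!$ would overwhelm the savings unless $\SS(\mm)$ were known to be extremely small on average. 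Hence we must sieve only a proper sub-chain and dispose of the remaining primes by other means.

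The key combinatorial observation is that
\[
\prod_{j=0}^{rl-1}\frac{\log p_j}{\log p_{j+1}}=\frac{\log p_0}{\log p_{rl}}\le (r+1)^{\eta}.
\]
Partitioning the $rl$ links into $r$ consecutive tiers of $l$ links each, pigeonhole produces an index $j\in\{1,\ldots,r\}$ for which the $j$-th tier from the top has log-ratio at most $(r+1)^{\eta/r}$. Within such a condensed tier, the link product $\prod_{i=(j-1)l}^{jl-1}m_i$ is bounded by $p_{jl}^{C\eta}\le x^{C\eta}$, severely constraining the $\mm$-sum restricted to that portion of the chain. The outer sum $\sum_{j=1}^{r}$ in the theorem corresponds exactly to this case split.

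For each $j$, I would sieve the initial segment $p_{jl}\prec\cdots\prec p_0$ with Lemma 5.1, using only $k=\lfloor jl/(\log_2 jl)^2\rfloor$ forms (just small enough for the hypothesis $k\ll\log x/\log_2 x$) and discarding the remaining primes, absorbing the cost of discarding into the factor $(\log_2 x)^{O(jl)}$ produced by the small-prime contributions to $\SS(\mm)$ controlled via Lemma 5.4. The combined effect of the sieve savings $2^kk!/(\log x)^k$, the dyadic-box average of $\SS(\mm)$ from Lemma 5.4, and the count of ordered factorisations of the condensed link product under the pigeonhole constraint yields the summand $x(\log_2 x)^{O(jl)}\bigl((jl)^{3+\eta}/\log x\bigr)^{\lfloor jl/(\log_2 jl)^2\rfloor}$. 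The first term $\tfrac{x}{\log x}(2\eta\er^{1+\eta})^{l/2}$ covers the default case in which $k\approx l$ already fits inside the range of Lemma 5.1 and the entire condensed tier can be sieved; a further pigeonhole pass inside that tier halves the effective sieve length to $l/2$, and a clean application of Lemmas 5.1 and 5.4 produces the geometric factor $(2\eta\er^{1+\eta})^{l/2}$.

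The main obstacle is the combinatorial bookkeeping required to apply Lemma 5.4 cleanly: the $\mm$-tuples must be covered by dyadic boxes $(M_i,2M_i]$, and one must carefully track the set $\II=\{i:M_i>2^{k^2\log^3 k}\}$ that governs the auxiliary exponent $b$, so that the extraneous factor $(\log k)^b$ remains subdominant in every tier configuration. A secondary difficulty is verifying the hypothesis $B\le\delta\log x$ of Lemma 5.1 by means of the second estimate of Lemma 5.2, which demands control on the total link product $m_1\cdots m_{k-1}$---precisely what the condensed-tier constraint delivers.
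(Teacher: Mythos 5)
There is a genuine gap, and it lies at the architectural centre of your plan. Your premise that Lemma \ref{sieve} cannot be applied to all $k_j+1=jl+1$ forms is false in the only regime that matters: the stated bound exceeds $x$ (hence is trivial) unless $rl\le(\log x)^{1/(3+\eta)}$, and under that reduction $k_j\le(\log x)^{1/3}\ll\delta\log x/\log_2 x$, so the full-length sieve is admissible. More importantly, your remedy --- sieving only $k'=\lfloor jl/(\log_2 jl)^2\rfloor$ forms and ``discarding the remaining primes'' --- cannot produce a nontrivial bound. Each unsieved link variable $h_i$ contributes a harmonic sum of size about $\log x$, so the sum over $\mm$ subject to $h_1\cdots h_{jl}\le x/x_j$ is of order $(\log x)^{jl}/(jl)!$, while a sieve over only $k'+1$ forms saves merely $(\log x_j)^{-k'-1}$ with $k'=o(jl)$; the resulting estimate dwarfs $x$ and no $(\log_2 x)^{O(jl)}$ bookkeeping can absorb the deficit. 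The factor $2^kk!$ that worried you is in fact cancelled precisely by the $1/(jl)!$-type decay of $\sum_{\mm}1/(h_1\cdots h_{jl})$ under the condensation constraint; exploiting that cancellation, with every link paired against a sieved prime form, is the heart of the matter and is absent from your proposal. Relatedly, you have misidentified the exponent $\lfloor jl/(\log_2 jl)^2\rfloor$: in the actual argument it is not a sieve length but the threshold $b_j$ on how many links may be ``small'' (below $2^{2k_j^2\log^3k_j}$), introduced to control the $(2k_j^2\log^3k_j)^{b_j}$ small-link sums and the $(c_2\log k_j)^{b}$ factor in Lemma \ref{singular}; the second term of the theorem is the contribution of chains with at least $b_j$ small links, estimated with the crude bound of Lemma \ref{singupper}.

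Your decomposition also differs from the one needed. Pigeonholing a single condensed tier with $\log p_{(j-1)l}/\log p_{jl}\le(r+1)^{\eta/r}$ gives only one multiplicative constraint, and sieving (even all of) that tier forfeits the primality of $p_0$ and of every level outside the tier, so it cannot yield a term of the shape $\frac{x}{\log x}\,c^{\,l}$. The proof instead classifies $p$ by the \emph{first} index $j$ with $p_{jl}\ge x_j$, where $x_i=x^{(i+1)^{-\eta}}$; this produces the nested constraints $h_{k_i+1}\cdots h_{k_j}\le x_i/x_j$ for every $i<j$, and a Rankin-type optimization over all these constraints simultaneously (with exponents tuned to the scales $(i+1)^{-\eta}$) is what converts $(2k_j/\er)^{k_j}(\log x_j)^{-k_j}$ times the constrained link sum into the geometric factor $(2\eta\er^{1+\eta})^{jl}$. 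Your explanation of the first term --- a further pigeonhole ``halving the effective sieve length to $l/2$'' --- is not a real mechanism; the exponent $l/2$ in the statement merely absorbs secondary correction factors of size $\exp\{O(k_j\log_2k_j/\log k_j+b_j\log_2k_j)\}$. As written, the proposal would need to be rebuilt around the full-length sieve and the nested-constraint averaging to reach the stated bound.
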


\begin{proof} Suppose $2\eta \er^{1+\eta}<1$ and $rl\le (\log x)^{1/(3+\eta)}$,
else the theorem is trivial.  Put
$k_j = jl$  and $x_j = x^{(j+1)^{-\eta}}$ for $0\le j\le r$.
Suppose $p\le x$ and there are even integers $h_1,\ldots,h_{k_r}$ so that
\be\label{ph}
p=p_0=h_1p_1+1, p_1=h_2p_2+1, \ldots, p_{k_r-1}=h_{k_r} p_{k_r}+1,
\ee
with $p_0,\ldots,p_{k_r}$ prime and $p_{k_r} \ge x_r$.  
The vector $(h_1,\ldots,h_{k_r})$ may not be unique, but we associate
to each such $p$ a single such vector.  Each $p$ lies in 
$Q_1\cup \cdots \cup Q_r$, where
$Q_j$ is the set of primes $p$ so that
$p_{k_i} < x_i \; (i<j)$ and $p_{k_j} \ge x_j$.
By assumption, $k_r \le (\log x_r)^{1/3}$.

Fix $j$ and even integers $h_1,\ldots,h_{k_j}$ satisfying
$h_1 \cdots h_{k_j} \le x/x_j$.  By Lemma
\ref{singupper},
$$
\sum_{p} \frac{k_j-\xi(p;(h_{k_j},\ldots,h_1))}{p}\log p \ll k_j
\log_2 x \ll (\log x_r)^{1/3} \log_2 x.
$$
By Lemma \ref{sieve} and
Stirling's formula, the number of $p=p_0\le x$ satisfying \eqref{ph}
is 
\be\label{upperl} \ll \frac{x}{h_1 \cdots h_{k_j}} \frac{ (2
k_j/\er)^{k_j+3/2} \SS(h_{k_j},\ldots,h_1)}{\(\log x_j \)^{k_j+1}}.
\ee 
Let $1\le b_j\le
k_j$ be a parameter to be chosen later, and put $A_j=2^{2k_j^2\log^3 k_j}$.
Let $Q_{j,1}$ be the set of
$p\in Q_j$ for which at least $b_j$ of the variables
$h_1,\ldots,h_{k_j}$ are $\le A_j$, and $Q_{j,2}=Q_{j}
\backslash Q_{j,1}$.

To estimate $|Q_{j,1}|$, fix a set $\BB \subseteq \{1,\ldots,k_j\}$
of size $b_j$ so that $h_i\le A_j$ for each $i\in \BB$.  Let $\II=\{1\le
i\le k_j: i\not\in \BB\}$ and, for $0\le i\le j-1$, put
$a_i = | \BB \cap \{k_i+1,\ldots,k_{i+1} \}|$, 
$\II_i = \II \cap \{k_i+1,\ldots,k_{i+1} \}.$
By the definition of $Q_j$,
\be\label{hg} \prod_{g\in \II_i \cup \cdots \cup
\II_{j-1}} h_g \le h_{k_{i}+1} \cdots h_{k_j}  \le \frac{x_i}{x_j}
\quad (0\le i\le j-1). 
\ee 
Since $h_i\ge 2$ for all $i$,
\be\label{smallhg} \prod_{g\in \BB} \sum_{2\le h_g \le A_j}
\frac{1}{h_g} \le (2k_j^2\log^3 k_j)^{b_j}. \ee 
Let $\a = {l}/{\log x_j}$.
By \eqref{hg} and the elementary estimate $\sum_{2\le h\le y}
h^{-1-s} \le 1/s$,
\begin{align*}
\sum_{h_g \; (g\in \II)} \frac{1}{\prod_{g\in \II} h_g}
&\le \sum_{h_g \ge 2 \; (g\in \II)} \frac{1}{\prod_{g\in \II} h_g}
 \prod_{i=0}^{j-1} \pfrac{x_i}{x_j}^{\a} \frac{1}{\prod_{g\in \II_i\cup
    \cdots \cup \II_{j-1}} h_g^{\a}} \\
&=\prod_{i=0}^{j-1} \pfrac{x_i}{x_j}^{\a} \sum_{h_g \ge 2 \; (g\in
  \II_i)} \frac{1}{\prod_{g\in \II_i} h_g^{1+(i+1)\a}} \\
&\le \prod_{i=0}^{j-1} \pfrac{x_i}{x_j}^{\a}
  \pfrac{1}{(i+1)\a}^{k_{i+1}-k_i-a_i}\\
&= \pfrac{1}{\a}^{k_j-b_j} \frac{1^{a_0}2^{a_1}\cdots
    j^{a_{j-1}}}{(j!)^l} \exp\left\{
 l \sum_{i=0}^{j} \left[ \pfrac{j+1}{i+1}^\eta-1 \right] \right\}.
\end{align*}
The last sum is $\le \frac{\eta}{1-\eta} (j+1) \le  2j$.
Also, $1^{a_0}2^{a_1}\cdots j^{a_{j-1}} \le j^{b_j}$ and $j! \ge
(j/\er)^j$.  Hence,
\be\label{largehg} \sum_{h_g \; (g\in \II)}
\frac{1}{\prod_{g\in \II} h_g} \le \er^{3k_j} \pfrac{\log
x_j}{k_j}^{k_j-b_j}.
\ee
The number of choices for $\BB$ is $\binom{k_j}{b_j} \le 
(\er k_j/b_j)^{b_j}$.
By \eqref{upperl}, \eqref{smallhg}, \eqref{largehg}, and Lemma
\ref{singupper}, 
\be\label{Qj1}
\begin{split}
|Q_{j,1}| &\ll \frac{x \(c_1 k_j \log_2 x \)^{k_j+3/2}}
{(\log x_j)^{k_j+1}} \pfrac{2\er k_j^3 \log^3 k_j}{b_j}^{b_j}
\er^{3k_j}  \pfrac{\log x_j}{k_j}^{k_j-b_j} \\
&= \frac{x}{\log x_j} k_j^{3/2} \( c_1 \er^3 \log_2 x \)^{k_j+3/2}
  \pfrac{2\er k_j^3 (k_j/b_j) \log^3 k_j (j+1)^\eta}{\log x}^{b_j} \\
&\ll x \exp \biggl\{ O(k_j\log_3 x ) + b_j \biggl[
(3 + \eta) \log k_j + \log\pfrac{k_j}{b_j} - \log_2 x
 \biggr]  \biggr\}.
\end{split}
\ee

We next estimate $|Q_{j,2}|$.   Place each variable $h_i$ into an
interval $J_i$.  If $h_i \le A_j$, then take $J_i=(2^{l_i-1},2^{l_i}]$ for
an integer $l_i\ge 1$, and if $h_i > A_j$, then take
$$
J_i=\left (\lfloor A_j(1+1/k_j)^{l_i-1} \rfloor,
  \lfloor A_j(1+1/k_j)^{l_i}\rfloor \right]
$$
for some integer $l_i\ge 1$.  For brevity, write $J_i=(H_i,K_i]$ for each $i$.
Since $K_i-H_i \ge H_i/(2k_j)$, there are at most $b_j$ values of
$i$ with $K_i-H_i \le A_j$.  Lemma \ref{singular} then gives
\begin{align*}
\sum_{h_1\in J_1} \cdots \sum_{h_{k_j} \in J_{k_j}}
  &\frac{\SS(h_{k_j},\ldots,k_1)} {h_1 \cdots h_{k_j}} \le
\frac{1}{H_1 \cdots H_{k_j}}
  \sum_{h_1\in J_1} \cdots \sum_{h_{k_j} \in J_{k_j}} \SS(h_{k_j},\ldots,h_1) \\
&\ll (c_2\log k_j)^{b_j}\exp\left\{O\pfrac{k_j\log_2
    k_j}{\log k_j} \right\} \prod_{i=1}^{k_j} \frac{K_i-H_i}{H_i}.
\end{align*}
By our definition of the intervals $J_i$,
\begin{align*}
\prod_{i=1}^{k_j} \frac{K_i-H_i}{H_i} &\le \prod_{\substack{1\le i\le
  k_j \\ K_i-H_i < A_j}}
2\sum_{H_i<h_i\le K_i} \frac{1}{h_i} \prod_{\substack{1\le i\le k_j \\
K_i-H_i \ge A_j}}
\(1+O\pfrac{1}{k_j}\) \sum_{H_i<h_i\le K_i} \frac{1}{h_i} \\
&\ll 2^{b_j} \sum_{h_1\in J_1} \cdots \sum_{h_{k_j} \in J_{k_j}}
\frac{1}{h_1\cdots h_{k_j}}.
\end{align*}
Thus, after summing over all possibilities for $J_1,\ldots,J_{k_j}$,
we obtain by \eqref{upperl}
$$
|Q_{j,2}| \ll \frac{x (2k_j/\er)^{k_j+3/2}}{(\log x_j)^{k_j+1}}
\exp\left\{ O\( \frac{k_j\log_2 k_j}{\log k_j} + b_j \log_2 k_j \) \right\}
 \sum_{h'_1,\ldots h'_{k_j}} \frac{1}{h'_1 \cdots h'_{k_j}},
$$
where $h'_{k_i+1} \cdots h'_{k_{j}} \le 2^{k_{j}-k_i} h_{k_{i}+1}\cdots
h_{k_{j}}\le  2^{k_j} x_i/x_j$ for $0\le i\le j-1$.
For positive $\a_0, \ldots, \a_{j-1}$,
\begin{align*}
\sum_{h'_1,\ldots,h'_{k_j}} \frac{1}{h'_1 \cdots h'_{k_j}} &\le
  \prod_{i=0}^{j-1} \biggl[ \( 2^{k_{j}} \frac{x_i}{x_j} \)^{\a_i}
  \sum_{h'_{k_i+1}, \ldots,h'_{k_{i+1}}=2}^\infty
  \frac{1}{(h_{k_i+1}' \cdots h_{k_{i+1}}')^{1+\a_0+\cdots+\a_i}} \biggr] \\
&\le \prod_{i=0}^{j-1} \( 2^{k_{j}} \frac{x_i}{x_j} \)^{\a_i}
  \pfrac{1}{\a_0+\cdots+\a_i}^{l}.
\end{align*}
If we ignore the factors $2^{k_j\a_i}$, the optimal choice of parameters is
$$
\a_i = \frac{l}{(j+1)^\eta \log x_j} \biggl[ \frac{(i+2)^\eta
    (i+1)^\eta}{(i+2)^\eta - (i+1)^\eta} - \frac{(i+1)^\eta
    i^\eta}{(i+1)^\eta - i^\eta} \biggr],\qquad i=0,\ldots,j-1.
$$
Since $(i+2)^\eta - (i+1)^\eta \ge \eta (i+2)^{\eta-1}$,
\[
\a_0+ \cdots + \a_i 
\in \Big[ \frac{l(i+1)(i+2)^\eta}{\eta (j+1)^\eta \log x_j},\frac{l(i+2)(i+1)^\eta}{\eta (j+1)^\eta \log x_j}\Big].
\]
Recalling $k_j=jl$, the sum on $h_1',\ldots,h_{k_j}'$ is at most
\[
2^{k_j(\a_0+\cdots + \a_{j-1})} \exp \left\{ \sum_{i=0}^{j-1} \a_i
\biggl[ \pfrac{j+1}{i+1}^\eta - 1 \biggr]\log x_j \right\} 
\pfrac{\eta
  (j+1)^\eta \log x_j}{l}^{k_j} \frac{1}{(j!)^l ((j+1)!)^{\eta l}}.
\]
The exponential
factor is $\er^{lj}=\er^{k_j}$ and $(j+1)!\ge e^{-j-1}(j+1)^{j+1}$, so
$$
\sum_{h'_1,\ldots,h'_{k_j}} \frac{1}{h'_1 \cdots h'_{k_j}} \le
2^{2k_j^2/(\eta \log x_j)} \pfrac{\eta \er^{2+\eta} \log x_j}{k_j}^{k_j}.
$$
Therefore,
\be\label{Qj2}
|Q_{j,2}| \ll \frac{x}{\log x} \( 2 \eta \er^{1+\eta} \)^{k_j}
\exp\left\{ O\( \frac{k_j\log_2 k_j}{\log k_j} + b_j \log_2 k_j \) \right\}.
\ee
Finally, put $b_j=\fl{k_j/(\log_2 k_j)^2}$, and sum the inequalities
\eqref{Qj1} and \eqref{Qj2} for $1\le j\le r$.
\end{proof}

\begin{proof}[Proof of Theorem \ref{Prattupper}]
Let $\eta=0.15718$,
$l=\fl{(\log x)^{\eps}}$ and $r = \fl{(\log x)^{\beta}},$
where $\eps$ and $\b$ are fixed and satisfy $0<\eps+\b<\frac{1}{3+\eta}.$
Then $\log x_r \asymp (\log x)^{1-\eta \beta}.$  
For the primes $p$ not counted in Theorem \ref{condensed_chain},
the primes at level $rl$ of the Pratt tree are all $<x_r$, so
$H(p) \le \frac{\log x_r}{\log 2}+1 + rl \ll (\log x)^{0.95022}$
if we take $\b$ sufficiently close to $\frac{1}{3+\eta}$.
By Theorem \ref{condensed_chain}, the number of exceptional
primes $p\le x$ is $O(x\exp\{ - (\log x)^{\delta} \})$ for some $\delta>0$.
\end{proof}

\begin{proof}[Proof of Theorem \ref{EGPSconj}]
Let $x$ be large, $x/\log x<n\le x$ and suppose there is a prime $p>x^{\eps/2}$
such that $p|\phi_k(n)$.  Then either (i) there is a prime $q>x^{\eps/2}$
and $0\le j\le k$ such that $q^2|\phi_j(n)$, or (ii) there is a
prime chain $p=p_k \prec p_{k-1} \prec \cdots \prec p_1 \prec p_0$ with $p_0|n$.
In case (i), let $j$ be the smallest such index.
Using the uniform estimate 
\[
\sum_{\substack{p\le x \\ p\equiv 1\!\!\!\pmod{m}}} \frac{1}{p} \ll \frac{\log_2 x}
{\phi(m)},
\]
coming from the Brun-Titchmarsh inequality, the number of integers
in category (i) is
\begin{align*}
&\le \sum_{q>x^{\eps/2}} \frac{x}{q^2} + 
  \sum_{j=1}^k \sum_{x^{\eps/2} < q\le x} \; 
\sum_{\substack{p_{j-1}\equiv 1\!\!\!\!
  \pmod{q^2} \\ p_{j-1}\le x}} \;
  \sum_{\substack{p_{j-2}\equiv 1\!\!\!\!\pmod{p_{j-1}} \\ p_{j-2}\le x}} 
  \cdots \sum_{\substack{p_0\equiv 1\!\!\!\!\pmod{p_1} \\ p_0\le x}}
  \frac{x}{p_0} \\
&\ll_{\eps,k} \frac{x^{1-\eps/2}}{\log x} + \sum_{j=1}^k 
  \frac{x^{1-\eps/2}(\log_2 x)^j}{\log x} \ll_{\eps,k} x^{1-\eps/2}.
\end{align*}
Consider $n$ in category (ii).  Take
$\eta=\frac17$, let $r$ be the smallest integer with $(r+1)^{-\eta}<\eps/2$,
let $l$ be sufficiently large, $l\le \log_2 x$ and $k=rl$.  By Theorem \ref{condensed_chain},
for $x^{\eps/2}<y\le x$, the number of $p_0\le y$ is
$O(y/\log^2 y + y (2\eta \er^{1+\eta})^{-l/2}/\log y)$.  By partial summation,
the number of $n$ is $\ll_\eps x (2\eta \er^{1+\eta})^{-l/2}$.  Taking
$l$ large enough, depending on $\eps$ and $\delta$, completes the proof.
\end{proof}

%
%
%
%
\section{Stochastic model of Pratt trees}\label{sec:model}
%
%
%
%

In this section, we develop a model of the Pratt trees which
explains Conjectures \ref{Dpnormal1} and \ref{Dpnormal2}.  Factor $n$
as $n=\prod_{j=1}^{\Omega(n)} p_j(n)$, with $p_1(n) \ge   p_2(n) \ge
\cdots$.  Put $p_j(n)=1$ for $j>\Omega(n)$ and let
$$
S(n) = \( \frac{\log p_1(n)}{\log n}, \frac{\log p_2(n)}{\log n}, \ldots\).
$$
The distribution of the first component of $S(n)$ has been greatly
studied, the results having wide application in the theory of numbers
(see e.g. the comprehensive survey article \cite{HT}).
We have\footnote{If $\mathcal{B} \subseteq \mathcal{A}\subseteq \NN$, we say
 $\PPP(n\in \mathcal{B} | n\in \mathcal{A})=\a$ if
$\displaystyle \lim_{x\to\infty} \frac{| \{ n\in \mathcal{B} : n\le x \}|}{ |\{n\in 
\mathcal{A}: n\le x \}|} = \a.$}
 $\PPP ( \log p_1(n) \le \frac{1}{u}\log n ) = \rho(u),$
where $\rho$
is the \emph{Dickman function}, the unique continuous solution of the
differential-delay equations
$\rho(u)=1 \ (0\le u\le 1)$, $u\rho'(u)=-\rho(u-1) \ (u>1)$.
The complete distribution of $S(n)$, found by
Billingsly in 1972 \cite{Bi}, corresponds to the Poisson-Dirichlet
distribution with parameter 1, $PD(1)$ for short (more precisely,
for each $j$, the first $j$ components of $S(n)$ are distributed as
the first $j$ components of the $PD(1)$ distribution). 
 The joint distribution of the
components in the $PD(1)$ distribution can easily be expressed in terms of
$\rho$.
There is a simpler characterization of
the distribution, found by Donnelly and Grimmett \cite{DG}.  Let
$U_1, U_2, \ldots$ be independent random variables with uniform
distribution on $[0,1]$. Let $\xx=(x_1,x_2,\ldots)$ be the infinite
dimensional vector formed from the decreasing rearrangement of the
numbers 
\be\label{Un} y_1=U_1, \quad y_2=(1-U_1)U_2, \quad
y_3=(1-U_1)(1-U_2)U_3, \ldots. 
\ee 
Then $\xx$ has the $PD(1)$
distribution. The paper \cite{DG} gives a simple, transparent proof
that $(x_1,\ldots,x_k)$ and the first $k$ components of $S(n)$ have
the same distribution. 

Since $\sum x_i = 1$ with probability 1,
we can interpret the $PD(1)$ distribution as a
random partition of the unit interval $[0,1]$ into an infinite number
of parts achieved by cutting $[0,1]$ at a random place (with uniform
distribution), then
cutting the right sub-interval at a random place, and so on.
\begin{conj}\label{PD(p-1)}
As $p$ runs over the set of primes, $S(p-1)$ has $PD(1)$ distribution.
\end{conj}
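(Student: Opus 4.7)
The plan is to reduce Conjecture \ref{PD(p-1)} to the analogous statement for random integers by showing that the distribution of large prime factors of $p-1$ matches that of a random integer of size $p$, and this requires strong information on primes in arithmetic progressions. I would attempt to prove the finite-dimensional statement: for each fixed $k$, the vector consisting of the first $k$ coordinates of $S(p-1)$, averaged over primes $p\le x$, converges in distribution as $x\to\infty$ to the first $k$ coordinates of $PD(1)$. By the Donnelly--Grimmett characterization \eqref{Un}, this is equivalent to showing that for Lebesgue-a.e.\ $(u_1,\ldots,u_k)\in[0,1]^k$, the proportion of primes $p\le x$ for which $\log p_i(p-1)/\log p < u_i$ for $i=1,\ldots,k$ matches the corresponding probability built from independent uniform variables.

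The first step is the $k=1$ case. For $1<u<2$, write $p-1=qm$ with $q=P^+(p-1)>p^{1/u}$ prime (necessarily unique, as $q^2>p$), so
\benn
|\{p\le x : P^+(p-1)>p^{1/u}\}| = \sum_{x^{1/u}<q\le x}\pi(x;q,1) + O(\pi(x)/\log x).
\eenn
Under the Elliott--Halberstam conjecture, $\pi(x;q,1) \sim \li(x)/(q-1)$ holds on average up to $q=x^{1-\eps}$, and Brun--Titchmarsh \eqref{BT} plus dyadic summation handle the range $q>x^{1-\eps}$. Mertens' theorem then yields $\PPP(P^+(p-1)>p^{1/u}) = \log u = 1-\rho(u)$, matching the Dickman distribution in this range. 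For $u\ge 2$, I would iterate via a Buchstab-type identity: count $p$ with $P^+(p-1)$ in a given dyadic interval by writing $p-1=qm$ and applying the $k=1$ statement with $p$ replaced by $q$, thereby propagating the Dickman density $\rho$ to all $u$.

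For $k\ge 2$, the same approach requires counting primes $p\le x$ such that $p-1 = q_1\cdots q_k m$ with $q_1>\cdots>q_k$ prime and of prescribed dyadic sizes. This is essentially a prime $k$-tuples question: one needs an asymptotic of the form
\benn
\pi(x;q_1\cdots q_k,1) \sim \frac{\li(x)}{\phi(q_1\cdots q_k)}
\eenn
valid on average with $q_1\cdots q_k$ up to $x^{1-\eps}$, and with appropriate uniformity. Assuming such a uniform version of EH over squarefree moduli with a bounded number of prime factors, the integrations over the dyadic scales of the $q_i$ produce precisely the joint density of the first $k$ coordinates of $PD(1)$, which can be written in terms of $\rho$.

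The main obstacle is that even granting EH, the necessary joint asymptotic for $\pi(x;q_1\cdots q_k,1)$ is a multi-dimensional prime $k$-tuples statement, stronger than EH alone; it corresponds to the conjectured uniform behavior of primes in progressions to smooth moduli with several large prime factors. A secondary difficulty is the \emph{tail} control required to pass from finite-dimensional convergence to convergence of the full vector $S(p-1)$: one must show that the contribution from the many small prime factors of $p-1$ behaves, on average, like that of a random integer of the same size, which itself needs a shifted-prime analog of the Erd\H os--Kac theorem and related results on $\Omega(p-1)$ with uniformity in the location of the prime factors. Both obstacles lie beyond current methods, which is why the assertion is stated only as a conjecture.
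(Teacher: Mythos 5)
First, note that the paper does not prove this statement at all: it is stated as a conjecture, with only the remark that it ``is widely believed, and is a simple consequence of EH,'' the genuine obstruction being unconditional, namely our ignorance of primes in progressions to moduli beyond $x^{1/2}$. Your proposal, which ends by declaring the statement out of reach, is therefore consistent with the paper's stance in its conclusion, and your $k=1$ analysis (EH for $q\le x^{1-\eps}$, Brun--Titchmarsh \eqref{BT} for the tail, Mertens, then Buchstab iteration to get $\rho$) is the standard and correct route.

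However, the ``main obstacle'' you identify for $k\ge 2$ is not an obstacle, and this is the substantive gap in your assessment. Requiring $q_1\cdots q_k\mid p-1$ is a single congruence condition on the single prime $p$ to the composite modulus $q_1\cdots q_k$; it is not a simultaneous-primality (prime $k$-tuples) condition on several linear forms. The hypothesis \eqref{BVQ} with $Q=x^{\th}$, $\th<1$, is an average over \emph{all} moduli $m\le Q$, composite ones included, so the asymptotic $\pi(x;q_1\cdots q_k,1)\sim \li(x)/\phi(q_1\cdots q_k)$ on average over the relevant ranges is already contained in EH; the smoothness condition on the cofactor is handled by the same Buchstab/inclusion--exclusion scheme you describe for $k=1$, with all moduli staying below $x^{1-\eps}$ off a set of thresholds of measure zero. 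This is exactly why the paper (and the cited work of Lamzouri) asserts the conjecture follows from EH alone --- prime $k$-tuples information would only be needed for deeper levels of the Pratt tree, not for the factorization of $p-1$ itself. Your ``secondary difficulty'' is also moot: as the paper's parenthetical remark about Billingsley's theorem makes clear, $PD(1)$ convergence here means convergence of the first $j$ components for each fixed $j$, so no Erd\H os--Kac-type control of the small prime factors is required, and since the components are nonincreasing with sum $\le 1$, the finite-dimensional marginals determine the law of the full vector in any case.
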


Conjecture \ref{PD(p-1)} is widely believed, and is a simple consequence of
EH.   Unconditionally, we know little about
primes in progressions to very large moduli. 
Assume that $S(p-1)$ has $PD(1)$ distribution, 
$S(q-1)$ has $PD(1)$ distribution for each prime $q|(p-1)$, the vectors $S(q-1)$ for $q|(p-1)$
are independent, and so forth.  The primes o4n the
first level of the tree, on a logarithmic scale,
correspond to a random partition of $[0,1]$.  The primes on the second
level correspond to randomly partitioning  each of the parts of the
original partition, etc.  The entire procedure corresponds to
what is known as a discrete-time \emph{random fragmentation process}.  Random fragmentation processes have been used to model a variety of physical phenomena
(e.g., genetic mutations, planet formation) and
the growth of certain data structures in computer science.
Discrete time fragmentation processes may be
recast in the language of \emph{branching random walks}, which we now
describe.

Let $M_n$ be the size of the largest object at time $n$. 
Then $M_n$ is a model of $Q_n :=\frac{\log q_n}{\log p}$, where $q_n$
 is the largest prime at level $n$ of the tree.
The event
$\{ M_n < \frac{\log 2}{\log p} \}$ is a model of the statement
``all the primes at level $n$ of the Pratt tree for $p$ are $< 2$'';
that is, $H(p)<n$.  Thus, $H(p)$ is modeled by the random variable
$T(\frac{\log 2}{\log p})$, where 
$T(\eps) = \min \{n : M_n \le \eps \}. $

Assuming EH,
Lamzouri \cite{Lam} showed that $Q_n$ has the same distribution
as $M_n$ for each \emph{fixed} $n$ (he studies the distribution
of $P^+(\phi_n(m))$ for all integers $m$; the same
proofs give the distribution of $Q_n$).  Further,
on  EH, Lamzouri shows that
$\PPP \{ Q_n \le \frac{1}{u} \} = \PPP \{ M_n \le \frac{1}{u} \} = \rho_n(u),$
where, for each fixed $n$,
\be\label{Lamtail}
\rho_n(u) = \pfrac{1+o(1)}{\log_{n-1} (u) \log_n (u)}^u \qquad (u\to \infty),
\ee
with $\log_0 (u) = u$.
Our goal is to understand the distribution of $M_n$ as
$n\to\infty$.  

Create a tree structure from the random fragmentation
process as follows: label the root
node with zero, beneath the root node put an infinite number of
child nodes, each corresponding to one of the fragments of the
initial segment $[0,1]$.  Each of these nodes has an infinite number
of child nodes, corresponding to the fragments in the second step of
the process, and so on.  Each node is labeled with the number $-\log
x$, where $x$ is the fragment size.  This randomly labeled tree
corresponds to a \emph{branching random walk} (BRW).
More generally, an initial ancestor
is at the origin, and who forms the zeroth generation.  This parent
then produces children, the first generation, which are randomly
displaced from the parent according to some law.  Each of these
children behaves like an independent copy of the parent, their
children randomly displaced from their parent according to the same
law, and forming the second generation, and so on.  In our case,
each parent produces an infinite number of offspring, the
displacements from their parent given by $V=\{-\log y : y\in Z\}$,
where $Z$ is a point set with $PD(1)$ distribution.  We'll say that
$V$ has $LPD$ (logarithmic Poisson-Dirichlet) distribution from now
on.

Let $B_n$ be the minimum label of an individual at time
$n$, so that $B_n = -\log M_n$.  The first order behavior of 
the analog of $B_n$ (law of large numbers) for a general BRW
was determined in the 1970s by Biggins, Hammersley and Kingman
(see \cite{Big}).
In our case, Biggins' theorem \cite{Big} implies
$B_n \sim \frac{n}{\er}$ as $n\to\infty$  almost surely.
Thus,
$T\pfrac{\log 2}{\log p} \sim \er \log_2 p$ as $p\to\infty$ almost surely,
which justifies Conjecture \ref{Dpnormal1}.

Let $b_n = \text{median}(B_n)$.  The study of $B_n$ naturally breaks
into two parts: (i) global behavior: asymptotics for $b_n$,
and (ii) local behavior: the distribution of $B_n-b_n$.  A result of
McDiarmid \cite{Mc} can be used to prove $b_n=\frac{n}{\er}+O(\log n)$, and this
was sharpened by Addario-Berry and Ford \cite{ABF} to

\begin{thm}\label{bnsharp}  We have
$b_n = \frac{n}{\er} + \frac{3}{2\er}\log n + O(1).$
\end{thm}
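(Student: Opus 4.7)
The plan is to apply the framework for the median of the minimum of a branching random walk developed in \cite{ABF}, with the displacement law $V$ having $LPD$ distribution. The key inputs are the Laplace-type transform $\varphi(\theta)=\EEE[\sum_{v\in V}\er^{-\theta v}]$ of the displacement law, identification of the optimal tilt, and a ballot-type barrier estimate for the corresponding tilted random walk.

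First I would use the stick-breaking representation \eqref{Un} to compute $\varphi$ explicitly. Since $\EEE[U^{\theta}]=\EEE[(1-U)^{\theta}]=1/(\theta+1)$ for $U$ uniform on $[0,1]$,
$$
\varphi(\theta)=\EEE\Big[\sum_i y_i^\theta\Big]=\sum_{i\ge 1}\frac{1}{(\theta+1)^i}=\frac{1}{\theta}\qquad(\theta>0).
$$
The function $-\log\varphi(\theta)/\theta=\log\theta/\theta$ is maximized at $\theta^{*}=\er$ with value $1/\er$, which recovers Biggins' speed $\gamma=1/\er$ as discussed above. Differentiating $\varphi$ twice shows that under the exponential tilt by $\theta^*$, the increment $V_1$ has mean $1/\er$ and variance $1/\er^{2}$; moreover, the tilted distribution has a density on $(0,\infty)$, hence is nonlattice. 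This reduces Theorem \ref{bnsharp} to statements about the tilted random walk $S_n=V_1+\cdots+V_n$ with drift $\gamma$ and positive finite variance.

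For the upper bound $b_n\le n/\er+(3/(2\er))\log n+O(1)$, I would show that with probability bounded away from zero some particle at generation $n$ lies at position at most $n/\er+(3/(2\er))\log n+C$. By many-to-one, the expected number of such particles whose ancestral path obeys an appropriate linear barrier equals
$$
\varphi(\theta^{*})^{n}\,\EEE\bigl[\er^{\theta^{*}S_n}\mathbf{1}_{\{S_n\in I\}}\mathbf{1}_{\{\text{barrier}\}}\bigr],
$$
and the ballot-type estimate $\PPP(S_k\ge 0\ \forall\,k\le n,\ S_n\in[a,a+1])\asymp (a+1)\,n^{-3/2}$ for the tilted walk shows that with the window $I$ centered at $\gamma n+a$ this expectation is $\Theta(1)$ precisely when $a=(3/(2\theta^{*}))\log n$, since $\varphi(\theta^{*})^{n}\er^{\theta^{*}\gamma n}=1$ and the extra factor $\er^{\theta^{*}a}=n^{3/2}$ matches the $n^{-3/2}$ ballot loss. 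A truncated second moment argument, estimating pairs of cousin particles by decomposing at their most recent common ancestor and applying many-to-two, then upgrades this to the required positive probability. For the matching lower bound $b_n\ge n/\er+(3/(2\er))\log n-O(1)$, the same many-to-one plus ballot bound shows that the expected number of particles below $n/\er+(3/(2\er))\log n-C$ is $O(\er^{-\theta^{*}C})$, and Markov finishes.

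The principal obstacle is verifying the ballot/barrier estimates for this specific tilted walk uniformly in the endpoint, which requires a local CLT for $S_n$ and classical Tauberian input on random walks conditioned to stay positive (as in Bramson, Addario-Berry--Reed, or A\"{\i}d\'{e}kon). A secondary technical point is that the underlying point process $V$ has infinite intensity — there are a.s.\ infinitely many offspring — but the tilted intensity has total mass $\varphi(\theta^{*})=1/\er<\infty$, so the standard BRW machinery applies once the nonlattice property and exponential moments of the tilted step (immediate from $\varphi(\theta)=1/\theta$ being smooth on $(0,\infty)$) are checked. Controlling the second-moment sum to rule out atypical lineages that pile up many particles near the frontier is where the most careful bookkeeping will be needed.
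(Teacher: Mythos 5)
The paper does not prove Theorem \ref{bnsharp} itself---it quotes it from Addario-Berry and Ford \cite{ABF}---and your plan (tilting at $\theta^{*}=\er$ via $\varphi(\theta)=\EEE\sum_i y_i^{\theta}=1/\theta$, a first-moment count with a linear barrier producing the $\frac{3}{2\er}\log n$ correction, then a truncated second moment) is exactly the route that \cite{ABF} executes and that the paper's surrounding heuristic (the identity $\EEE Z_n(t)=t^n/n!$ together with the atypical-ancestry/barrier correction) sketches. Your identification of the tilted spine step is also correct (the intensity of the $LPD$ point process is Lebesgue measure on $(0,\infty)$, so the tilted increment is exponential with mean $1/\er$ and variance $1/\er^{2}$), and the parts you defer---the ballot/barrier estimates and the second-moment bookkeeping with infinitely many offspring---are precisely the technical content of \cite{ABF}.
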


\begin{cor} We have
$\text{median} \bigl(
T(\eps)\bigr) = \er \log (1/\eps) - \frac32 \log_2 (1/\eps)+O(1)$.
\end{cor}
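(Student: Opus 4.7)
The plan is to deduce the corollary from Theorem~\ref{bnsharp} by using the simple duality between the hitting time $T(\eps)$ of the maximum fragment size and the minimum label $B_n = -\log M_n$, and then inverting the asymptotic for $b_n$.

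First I would observe a monotonicity: in this fragmentation BRW all displacements $-\log y$ with $y\in[0,1]$ are non-negative, so every child has label at least that of its parent; equivalently every fragment at generation $n+1$ is contained in one at generation $n$ and so has smaller or equal size. Hence $M_n$ is non-increasing and $B_n$ is non-decreasing in $n$. Setting $L=\log(1/\eps)$, this yields
\[
\{T(\eps)\le n\}\;=\;\{M_n\le\eps\}\;=\;\{B_n\ge L\},
\]
so $\PPP(T(\eps)\le n)=\PPP(B_n\ge L)$.

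Second I would read off the median from this identity. Writing $m(\eps):=\text{median}(T(\eps))$, the identity above shows that $m(\eps)$ is, up to an $O(1)$ discrepancy, the smallest integer $n$ with $b_n\ge L$. Theorem~\ref{bnsharp} gives
\[
b_n \;=\; \frac{n}{\er}+\frac{3}{2\er}\log n+O(1),
\]
which is strictly increasing in $n$ for large $n$ with increments $b_{n+1}-b_n = 1/\er + O(1)$, so the characterization determines $m(\eps)$ up to an additive constant.

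Third I would invert the relation $b_{m(\eps)}=L+O(1)$. Rearranging gives $m=\er L - (3/2)\log m + O(1)$, and substituting the first-order estimate $m=\er L + O(\log L)$ into the logarithm gives $\log m=\log L+1+O(\log L/L)$, whence
\[
m(\eps)\;=\;\er L-\tfrac{3}{2}\log L+O(1)\;=\;\er\log(1/\eps)-\tfrac{3}{2}\log_2(1/\eps)+O(1),
\]
as required. The only delicate point I expect is the passage from ``$\PPP(B_n\ge L)\ge 1/2$'' to ``$L\le b_n+O(1)$'', which must account for the discreteness of $n$ and possible atoms of the distribution of $B_n$ at $L$; but because the jumps of $n\mapsto b_n$ are of bounded size while $b_n$ grows linearly, a unit ambiguity in $n$ corresponds to only an $O(1)$ ambiguity in $L$, and this is absorbed in the stated error term. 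No piece of the argument seems to require anything beyond Theorem~\ref{bnsharp} and the monotonicity of $B_n$.
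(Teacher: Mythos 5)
Your argument is correct and is precisely the inversion the paper intends: the duality $\{T(\eps)\le n\}=\{B_n\ge \log(1/\eps)\}$ coming from monotonicity of $M_n$, combined with $b_n=\frac{n}{\er}+\frac{3}{2\er}\log n+O(1)$ from Theorem~\ref{bnsharp}. One small caveat: the delicate point is not atoms (the law of $B_n$ is atomless) but the lower-bound direction, where $\PPP(B_m\ge L)\ge\frac12$ only forces $L\le b_m+O(1)$ once one knows that $\PPP(B_n\ge b_n+x)$ drops strictly below $\frac12$ within a bounded distance of $b_n$ — this is supplied, e.g., by the right-tail estimate of Theorem~\ref{tails}(b) (whose proof uses only Theorem~\ref{bnsharp}, so there is no circularity), so your claim that nothing beyond Theorem~\ref{bnsharp} and monotonicity is needed is very slightly overstated.
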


This justifies part of Conjecture \ref{Dpnormal2}.  One ingredient in the proof 
is the following expectation identity.
Let $Z_n(t)$ be the number of generation $n$ individuals with position $\le t$,
and let $\zz^{(n)}$ be the set of positions of generation $n$ individuals.
If $\vv=(v_1,v_2,\ldots)$ has $PD(1)$ distribution, \eqref{Un} gives
\[
\EEE \sum_{j=1}^\infty v_j^s = \EEE \sum_{k=1}^\infty \bigl( (1-U_1)
\cdots (1-U_{k-1}) U_{k} \bigr)^s 
= \sum_{k=1}^\infty \frac{1}{(1+s)^k} = \frac{1}{s},
\]
since $\EEE U_i^s = \EEE (1-U_i)^s=1/(1+s)$.
By the branching property,
\[
\EEE \sum_{z_n\in \zz^{(n)}} \er^{-s z_{n}} =
\EEE \sum_{z_{n-1}\in \zz^{(n-1)}} \er^{-s z_{n-1}}
\EEE \sum_{z_1\in \zz^{(1)}} \er^{-s z_1} 
=\frac{1}{s} \EEE \sum_{z_{n-1}\in\zz^{(n-1)}} \er^{-s z_{n-1}}.
\]
By induction, the left side is $1/s^n$, so $\int_0^\infty \er^{-st}
d \EEE Z_n(t)=1/s^n$.  Therefore, $\EEE Z_n(t)=t^n/n!$.
Because  $t^n/n! \approx 1$ 
when $t=\frac{n}{\er} + \frac{1}{2\er}\log n + O(1)$, a naive
guess would be $b_n=\frac{n}{\er} + \frac{1}{2\er}\log n + O(1)$.
However, for reasons clearly explained in \cite{ABR} and executed in
\cite{ABF}, the leftmost
point in the $n$-th generation of a branching random walk has an
atypical ancestry with high probability.
Denote the locations of points in the ancestral line
of this leftmost point by $0,z_1,z_2,\ldots,z_n=B_n$ with $B_n$ close
to $b_n$.  Then usually
$z_j \ge \frac{j}{n} z_n - O(1)$ \ $(1\le j\le n/2)$.
A randomly chosen point $z_j\in \zz^{(n)}$ has this property
with probability of order $1/n$, so the expected number of such $z_j$
is in fact of order $t^n/(n\cdot n!)$, which is $\ge 1$ when $t\ge \frac{n}{\er} + \frac{3}{2\er}\log n + O(1)$.


We next discuss the local behavior of $B_n$.  Under very general
conditions on the BRW, it is known that $B_n-b_n$ is a tight
sequence.\footnote{A sequence $X_1, X_2, \ldots$ of random variables 
is tight if for every $\eps>0$ there is a number $M$ so that for all $j$,
$\PPP(|X_j|>M)\le \eps$.  In other words, the distribution of $X_j$
does not spread out as $j\to \infty$.}
The basic idea is that a single individual
will, with high probability, produce many offspring a few
generations later which are close by.
In our situation, tightness on the left for $H(p)$ is relatively easy to
prove unconditionally:

\begin{proof}[Proof of Theorem \ref{tightleft}]
The conclusion is trivial if $g(x^{1/2}) \le 3K$, so we will assume
that  $g(x^{1/2}) > 3K$. Let
$m=\lfloor g(x^{1/2})/K \rfloor$
so that $m\ge 3$.  Put $Q=x^{2^{-m}}$ and let
 $T$ be the set of primes $x^{1/2} < p\le x$ such that there is a prime
$q|(p-1)$ with $Q < q \le x^{1/4}$ and $H(q)\ge h(q)$.
For $p\in T$,
$$
H(p) \ge 1 + h(q) \ge h(Q) \ge h(x)-mK \ge h(p)-g(p),
$$
while by sieve methods (Theorem 4.2 of \cite{HR}), for large $x$
\[
| \{ x^{1/2} < p\le x: p\not\in T \}| \ll \frac{x}{\log x}
\prod_{\substack{Q < q \le x^{1/4} \\ H(q) \ge h(q)}} \(
  1-\frac{1}{q}\) \ll \frac{x}{\log x} 2^{-mc}.
\qedhere\]
\end{proof}

It is also known that under certain conditions on the
displacement law of the BRW (e.g. \cite{Ba}), the analog of $B_n-b_n$
converges in probability to a random variable as $n\to \infty$.
This is not known in our case.

\begin{conj}\label{conjX}
  $B_n-b_n \to X$ as $n\to\infty$ for a random variable $X$ with
  continuous distribution. 
\end{conj}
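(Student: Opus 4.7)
My plan is to adapt A\"id\'ekon's theorem on the convergence in law of the minimum of a branching random walk (BRW) at criticality to our infinite-offspring setting, exploiting the stick-breaking representation \eqref{Un}. The Laplace computation in the excerpt gives $m(s) := \EEE \sum_i \er^{-sV_i} = 1/s$, so $(\log m(s))/s = -(\log s)/s$ is minimized at the critical value $s^* = \er$, matching Biggins' speed $1/\er$.

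The key object is the derivative martingale
\benn
D_n = \sum_{z\in\zz^{(n)}} \( \tfrac{n}{\er} - z \) \exp\left\{ -\er \( z - \tfrac{n}{\er} \) \right\},
\eenn
whose martingale property follows from the criticality identity $s^* m'(s^*)/m(s^*) = \log m(s^*)$. I would first verify the standard moment conditions that guarantee $D_n\to D_\infty$ almost surely with $D_\infty>0$; using stick-breaking, the variance-type moment $\EEE \sum_k V_k^2 \er^{-\er V_k}$ reduces to a convergent geometric-type series with ratio $\lambda = 1/(1+\er) < 1/2$, and an $L\log L$ condition on the additive martingale $W_n = \sum_z \er^{-\er(z - n/\er)}$ should follow similarly. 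The identification $B_n - b_n \to C - \er^{-1} \log D_\infty$ in distribution, for an explicit constant $C$, then proceeds by a spinal decomposition and comparison of the ancestral line of the leftmost particle with a random walk conditioned to stay positive; the barrier estimates used in the proof of Theorem \ref{bnsharp} would feed directly into these comparisons. Since $D_\infty$ is a.s. positive with continuous distribution, the limit $X$ inherits a continuous distribution on $\RR$.

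The main obstacle is that our BRW has infinitely many offspring per generation, with displacements $V_k\to+\infty$ (since $y_k\to 0^+$), so A\"id\'ekon's theorem does not literally apply. One must truncate the offspring set, discarding individuals at positions $>A_n$ for a carefully growing threshold, and prove both that the leftmost-particle asymptotics are unchanged and that the discarded particles contribute negligibly to $D_n$ in the limit. This truncation is delicate because the spine is size-biased, so an unusually large offspring displacement along the spine could in principle survive in the limit; controlling this should be the most technical part of the argument.
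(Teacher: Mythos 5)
A point of order first: the paper does not prove this statement. It is posed as Conjecture \ref{conjX} precisely because, as the surrounding text says, such convergence ``is not known in our case,'' so there is no proof in the paper to compare yours against; your proposal must be judged as a research plan. As a plan, it picks the right modern route: the critical parameter $s^*=\er$ coming from $\EEE\sum_i\er^{-sV_i}=1/s$, the derivative martingale, and an A\"id\'ekon-type spinal decomposition with barrier estimates, which would give convergence in law of $B_n-\bigl(\tfrac{n}{\er}+\tfrac{3}{2\er}\log n\bigr)$ to a Gumbel law randomly shifted by a multiple of $\log D_\infty$; since that limit law is continuous with strictly increasing distribution function and $D_\infty>0$ a.s.\ (the tree never dies, every individual having infinitely many children), the medians $b_n$ would then differ from this deterministic centering by a convergent sequence, yielding the conjecture. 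But as written, every substantive step --- martingale convergence, positivity of $D_\infty$, the spine/barrier analysis, identification of the limit --- is delegated or asserted, so this is a program rather than a proof.

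Moreover, your diagnosis of the main obstacle is off. A\"id\'ekon's hypotheses are integrability conditions on the offspring point process (non-lattice; in normalized coordinates $\hat V_k=\er V_k-1$: $\EEE\sum\er^{-\hat V_k}=1$, $\EEE\sum\hat V_k\er^{-\hat V_k}=0$, $\EEE\sum\hat V_k^2\er^{-\hat V_k}<\infty$, plus $\log$-moment conditions on $X=\sum_k\er^{-\hat V_k}$ and $\tilde X=\sum_k\hat V_k^+\er^{-\hat V_k}$); they do not require finitely many offspring, and the LPD point process is locally finite since $y_k\to0$. Here these conditions hold trivially: because $\sum_k y_k=1$ a.s.\ and $t^{\er-1}$, $t^{\er-1}\log(1/t)$ are bounded on $(0,1]$, both $X=\er\sum_k y_k^{\er}$ and $\tilde X$ are \emph{bounded} random variables, and $\EEE\sum_k V_k^2y_k^{\er}=2\er^{-3}$ by differentiating $\EEE\sum_k y_k^{s}=1/s$ twice. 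So the truncation machinery you single out as the hardest part is very likely unnecessary; the genuine remaining work is the careful verification of these hypotheses (or a self-contained rerun of the spinal argument for this particular law) and the small argument passing from the fixed centering to the median centering $b_n$. Two further corrections: your derivative martingale has the wrong sign --- it should be, up to a positive constant, $\sum_{z\in\zz^{(n)}}(\er z-n)\exp\{-(\er z-n)\}$, so that its limit is positive --- and the mode of convergence one can hope for is distributional only: conditionally on the early generations the fluctuation remains a nondegenerate (shifted Gumbel) law, so $B_n-b_n$ should not be expected to converge in probability, and the conjecture should be read, as your cited results deliver, in the sense of convergence in distribution.
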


If $X$ exists, and the medians satisfy $b_{n+1}-b_n \to \er^{-1}$ as $n\to\infty$
(plausible in light of Theorem \eqref{bnsharp}), it
is easy to see that $X \deq -1/\er + \min_i ( z_i + X_i ),$
where $(z_1,z_2,\ldots)$ has $LPD$ distribution, $X_1, X_2, \ldots$
are independent copies of $X$, and $\deq$ means ``has the same distribution as''.  This follows by conditioning
on the positions of the first generation individuals (the points
$z_i$); that is, using
$B_n \deq \min_i ( z_i + B_{n-1}^{(i)} ),$
where $B_{n-1}^{(i)}$ are independent copies of $B_{n-1}$.  The solutions
$X$ of this recursive distributional equation are not known, however.

Unconditionally (whether $X$ exists or not), we prove that
$B_n-b_n$ has an exponentially decreasing left tail and doubly-exponentially 
decreasing right tail.
Consequently, if Conjecture \ref{conjX} holds, then all moments of $X$ exist.

\begin{thm}\label{tails}
(a) For any $c_1<\er$, we have
$$
\PPP\{B_n-b_n\le -x\} \ll_{c_1} \er^{-c_1 x} \qquad (n\ge 1, x\ge 0),
$$
and for any $c_2 > 2\er \log(2\er)$ and $\eta>0$, 
$$
\PPP\{B_n-b_n\le -x\} \gg_{c_2,\eta} \er^{-c_2 x} \qquad (n\ge 1, 0\le x\le (1/2\er-\eta)n);
$$
(b) for any $c_3<1$ there is a constant $c_4$, 
depending on $c_3$, so that
$$
\PPP \{ B_n \ge b_n + x \} \le \exp \( - \er^{c_3 (x-c_4)} \) \qquad
(n\ge 1, x\ge 0).
$$
\end{thm}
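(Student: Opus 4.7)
For the upper bound in part (a), my plan is to apply Markov's inequality to $Z_n(t)$, the count of generation-$n$ individuals at position $\le t$. Using the expectation identity $\EEE Z_n(t) = t^n/n!$ derived earlier in the section gives $\PPP(B_n \le b_n - x) \le (b_n - x)^n/n!$. Substituting $b_n = n/\er + (3/(2\er))\log n + O(1)$ from Theorem~\ref{bnsharp} and applying Stirling's formula yields a bound of order $n \er^{-\er x}$, which already implies $\ll_{c_1} \er^{-c_1 x}$ whenever $x \ge (\log n)/(\er - c_1) + O(1)$. For smaller $x$, where the polynomial factor in $n$ is not absorbed, I would refine the first moment by restricting $Z_n(t)$ to descendants whose ancestral random walk stays above the line interpolating $0$ and $b_n - x$. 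By a standard ballot estimate, this restriction decreases the expected count by a factor of order $n$; coupling this with the fact that the leftmost descendant has such a good ancestral path with high probability recovers the bound uniformly in $n \ge 1$.

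For the lower bound in part (a), I would construct an explicit event of controlled probability that forces $B_n \le b_n - x$. Using the Donnelly-Grimmett construction of $PD(1)$, at each generation $j$ follow a distinguished lineage and require the uniform variable $U_1^{(j)}$ to lie in an interval $[\er^{-\alpha_j}, \er^{-\beta_j}] \subset [1/2, 1]$; since $U_1^{(j)} \ge 1/2$ guarantees the chosen fragment is the largest, the distinguished descendant is the leftmost at level $n$. Its position is $\sum_j(-\log U_1^{(j)})$, which must lie below $b_n - x$. The probability of this compound event is the product of $n$ uniform-interval probabilities, and a Cram\'er-type optimization of the intervals (balancing per-step cost against displacement) yields the rate $c_2 = 2\er\log(2\er)$. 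The constraint $x \le (1/(2\er) - \eta)n$ delineates the feasibility region of this extremal construction.

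For part (b), the plan is induction on $n$. Let $G_n(t) = \PPP(B_n \ge t)$; conditioning on the first generation yields the branching equation
\[
G_n(t) = \EEE \prod_i G_{n-1}(t - z_i),
\]
where $(z_i)$ has the $LPD$ distribution. Assuming inductively $G_{n-1}(b_{n-1} + y) \le \exp(-\er^{c_3(y - c_4)})$ for $y \ge 0$, setting $t = b_n + x$, and using $b_n - b_{n-1} = 1/\er + O(1/n)$ (Theorem~\ref{bnsharp}), this becomes $G_n(b_n + x) \le \EEE \er^{-S}$, where
\[
S = \er^{c_3(1/\er + x - c_4)}\sum_i y_i^{c_3} \mathbf{1}\!\left[y_i \ge \er^{-(1/\er + x - c_4)}\right].
\]
The identity $\EEE \sum y_i^{c_3} = 1/c_3$ (valid for $c_3 > 0$, with convergence and tight higher moments requiring $c_3 < 1$) identifies $\EEE S$ as a constant multiple of $\er^{c_3(x - c_4)}$. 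Via the Donnelly-Grimmett recursion and exponential concentration of $\sum y_i^{c_3}$, the random variable $S$ concentrates well above $\er^{c_3(x - c_4)}$, and a Chernoff bound on $\er^{-S}$ delivers $\EEE \er^{-S} \le \exp(-\er^{c_3(x - c_4)})$ after adjusting $c_4$, closing the induction.

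The main obstacles are: in part (a), making the ballot refinement uniform in $n \ge 1$ and $x \ge 0$ (the ``good ancestry'' probability estimate for the leftmost particle is the delicate step); and in part (b), upper-bounding $\EEE \er^{-S}$, since Jensen's inequality goes the wrong direction --- this forces one to extract sharp concentration of $\sum y_i^{c_3}$ from the fine structure of $PD(1)$, an estimate that degenerates at the critical value $c_3 = 1$ (where $\sum y_i \equiv 1$).
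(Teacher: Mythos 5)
Your lower-bound construction in part (a) has a genuine gap: you constrain the distinguished lineage at \emph{every} one of the $n$ generations, requiring each uniform variable to lie in a subinterval of $[1/2,1]$, so your compound event has probability at most $2^{-n}$. But the claimed bound $\gg_{c_2,\eta}\er^{-c_2x}$ must hold uniformly in $n$ for all $0\le x\le(1/(2\er)-\eta)n$, in particular for bounded $x$ and $n\to\infty$, where it is a fixed positive constant; no event constraining all $n$ steps can produce this. The missing idea is to constrain only the last $kr\approx 2\er x$ generations: run the walk freely for $n-kr$ generations and use $\PPP\{B_{n-kr}\le b_{n-kr}\}\ge\tfrac12$ (definition of the median), then graft onto the leftmost particle a single lineage whose $kr$ steps are each $\le\del/(2r)\approx1/(2\er)$ (each such step has probability exactly $\del/(2r)$, since $\PPP\{Z_1(\eps)\ge1\}=\eps$), and use $b_n-b_{n-kr}\ge\del k$ from Theorem \ref{bnsharp}; this is where $c_2=2\er\log(2\er)$ comes from. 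Incidentally, your claim that $U_1^{(j)}\ge1/2$ makes the distinguished descendant the leftmost at level $n$ is false (other subtrees can contain lower particles), though leftmostness is not needed for a lower bound.

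In part (b) the step $\EEE\,\er^{-S}\le\exp(-\er^{c_3(x-c_4)})$ is the entire difficulty, and the tools you invoke cannot deliver it. The identity $\EEE\sum_i y_i^{c_3}=1/c_3$ and generic concentration of $\sum_i y_i^{c_3}$ control the lower tail of the truncated sum at best singly exponentially in $x$, whereas the induction needs $\PPP\{S\ \text{small}\}$ to be \emph{doubly} exponentially small in $x$: for instance, on the event that every $y_i$ lies below the cutoff $\er^{-(1/\er+x-c_4)}$ you only have the trivial bound $\prod_iG_{n-1}(t-z_i)\le1$, and the probability of that event is a Dickman-type quantity $\rho(\er^{1/\er+x-c_4})$. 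So the argument must pass through Dickman/large-level estimates such as \eqref{Lamtail} and Lemma \ref{Zrklarge} — which is exactly how the paper handles both the base case $n\le R$ (which you omit) and the event $Q$ that no generation-$R$ point is usefully small; this input cannot be extracted from moments of $\sum y_i^{c_3}$. Two further structural problems: $c_4$ must remain fixed throughout the induction, so ``adjusting $c_4$'' at each step is not available; and the one-generation gain $b_n-b_{n-1}\approx1/\er$ is too small to absorb constants, which is why the paper steps $R$ generations at a time (with $\Delta=A\del$ large) and splits over the ordered generation-$R$ points via the events $P_j$, $Q$. By contrast, your first-moment-plus-barrier route for the upper bound in (a) is a genuinely different (and standard) alternative to the paper's argument, which instead combines Biggins' theorem on $Z_r(ar)$ with Lemma \ref{tight2}; it can be made to work, but the ballot step is the crux, and ``the leftmost descendant has a good ancestral path with high probability'' is not a proof — one must bound the expected number of particles below $b_n-x$ whose path crosses the barrier, decomposing over the time of first crossing.
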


\begin{remark}
 By \eqref{Lamtail}, part (b) is nearly 
best possible; that is, the conclusion is false if $c_3>1$.
\end{remark}

The next two lemmas
hold for very general branching random walks.  A
notable feature is that they are \emph{local} results, and tightness
of $B_n-b_n$ can be proved without knowing anything about the growth
of $b_n$. We will use Theorem \ref{bnsharp} to 
prove the stronger tail estimates.

\begin{lem}\label{tight1}
For positive integers $m,n$ and positive real numbers $M$, $N$,
$$
\PPP \{ B_{m+n} \ge M+N \} \le \EEE [ (\PPP \{ B_n\ge N \} )^{Z_m(M)}].
$$
\end{lem}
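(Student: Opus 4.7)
The plan is to exploit the branching property of the BRW by conditioning on the configuration of generation $m$, and then use independence of the sub-trees rooted at each generation-$m$ individual.

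First, I would set up notation. Let $\zz^{(m)}$ denote the (random) set of positions of generation-$m$ individuals, say $z_1, z_2, \ldots$ Each generation-$m$ individual $i$ at position $z_i$ is the root of its own BRW (shifted by $z_i$), and these sub-BRWs are mutually independent and each distributed as the original. Writing $B_n^{(i)}$ for the generation-$n$ minimum of the sub-BRW rooted at individual $i$, the branching property yields the distributional identity
\[
B_{m+n} = \min_{i}\bigl(z_i + B_n^{(i)}\bigr),
\]
where, conditional on $\zz^{(m)}$, the variables $B_n^{(i)}$ are i.i.d.\ copies of $B_n$.

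Next, I would observe the key containment of events. If $B_{m+n} \ge M+N$, then in particular for every index $i$ with $z_i \le M$ one must have $z_i + B_n^{(i)} \ge M+N$, hence $B_n^{(i)} \ge M+N-z_i \ge N$. Therefore
\[
\{B_{m+n} \ge M+N\} \;\subseteq\; \bigcap_{i:\, z_i \le M} \{B_n^{(i)} \ge N\}.
\]
Dropping the constraints coming from individuals with $z_i > M$ is where the inequality (as opposed to equality) enters; this is harmless and is the source of the eventual bound.

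Finally, I would condition on $\zz^{(m)}$. By the conditional independence of the $B_n^{(i)}$ and the fact that each has the distribution of $B_n$,
\[
\PPP\bigl\{B_{m+n} \ge M+N \,\big|\, \zz^{(m)}\bigr\} \;\le\; \prod_{i:\, z_i \le M} \PPP\{B_n \ge N\} \;=\; \bigl(\PPP\{B_n \ge N\}\bigr)^{Z_m(M)},
\]
since the number of indices $i$ with $z_i \le M$ is exactly $Z_m(M)$. Taking expectations over $\zz^{(m)}$ gives the stated inequality. There is no real obstacle here; the only thing to be careful about is bookkeeping the conditioning and making sure the independence is genuinely supplied by the branching property rather than smuggled in. Everything else is a one-line consequence.
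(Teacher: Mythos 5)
Your proof is correct and follows essentially the same route as the paper: conditioning on the generation-$m$ configuration, noting that $B_{m+n}\ge M+N$ forces every generation-$m$ individual at position $\le M$ to have its subtree minimum offset by at least $N$, and then using the conditional independence of the $Z_m(M)$ subtrees before taking expectations. The paper's proof is just a terser statement of this same argument.
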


\begin{proof}
Suppose $B_{m+n} \ge M+N$ and $Z_m(M) = k$.  For each of these $k$
individuals, all of their descendants in
generation $m+n$ are offset from their generation $m$ ancestor by at
least $N$.
\end{proof}

\begin{lem}\label{tight2}
Let $m,n$ be positive integers and let $M>0, \eps>0$ be real.
If $\EEE \{ (1-\eps)^{Z_m(M)} \} \le  \frac12$, then
$\PPP \{ B_n \le b_{n+m} - M \} \le \eps.$
In particular, the conclusion holds if $\PPP \{ Z_m(M) < 1/\eps \} \le \frac15$.
\end{lem}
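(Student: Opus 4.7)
My plan is to exploit the branching property at generation $m$: the descendants of any generation-$m$ individual at position $x$ are distributed, after $n$ more generations, as an independent copy of the whole BRW shifted by $x$. I will use this to transport the rare event ``$B_n$ is unusually small'' into an event about $B_{n+m}$ which the median property forces to be typical.

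Enumerate the generation-$m$ individuals with position $\le M$ as $(x_i)_{i\in\mathcal{S}}$, so $|\mathcal{S}|=Z_m(M)$. Conditionally on generation $m$, the minimum position in generation $n$ of the subtree rooted at $i$ is $x_i+B_n^{(i)}$, where $B_n^{(1)},B_n^{(2)},\ldots$ are i.i.d.\ copies of $B_n$ independent of generation~$m$. Since $B_{n+m}\le \min_{i\in\mathcal{S}}(x_i+B_n^{(i)})$ and $x_i\le M$, for any real $t$,
\[
\PPP(B_{n+m}>M+t \mid \text{gen.\ }m)\le \prod_{i\in\mathcal{S}}\PPP(B_n>M+t-x_i)\le \PPP(B_n>t)^{Z_m(M)}=(1-p)^{Z_m(M)},
\]
where $p:=\PPP(B_n\le t)$. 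Taking expectations yields $\PPP(B_{n+m}>M+t)\le \EEE[(1-p)^{Z_m(M)}]$.

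Now pick any $t<b_{n+m}-M$, so $M+t<b_{n+m}$. By the defining property of the median, $\PPP(B_{n+m}>M+t)\ge \PPP(B_{n+m}\ge b_{n+m})\ge \tfrac12$, whence $\EEE[(1-p)^{Z_m(M)}]\ge \tfrac12$. If $p>\eps$ held, the pointwise bound $(1-p)^{Z_m(M)}<(1-\eps)^{Z_m(M)}$ would give $\EEE[(1-p)^{Z_m(M)}]<\EEE[(1-\eps)^{Z_m(M)}]\le \tfrac12$, contradicting the hypothesis. Hence $\PPP(B_n\le t)\le \eps$ for every $t<b_{n+m}-M$, and letting $t\uparrow b_{n+m}-M$ gives the main assertion.

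For the ``in particular'' clause, put $k=\lceil 1/\eps\rceil$ and $q=\PPP(Z_m(M)<k)\le \tfrac15$. Splitting the expectation at $k$ and using $(1-\eps)^{1/\eps}\le \er^{-1}$,
\[
\EEE[(1-\eps)^{Z_m(M)}]\le q\cdot 1 +(1-q)(1-\eps)^k\le \tfrac15+\tfrac{4}{5\er}<\tfrac12,
\]
so the main hypothesis is verified. The argument has no serious technical obstacle; the only mild care is the $\le$-versus-$<$ issue at the endpoint $b_{n+m}-M$, resolved by the limit $t\uparrow b_{n+m}-M$.
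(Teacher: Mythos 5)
Your proof is correct and takes essentially the same route as the paper: the conditional-independence bound you derive at generation $m$ is exactly the paper's Lemma \ref{tight1}, and the median step together with the $\tfrac15+\tfrac{4}{5\er}<\tfrac12$ computation for the ``in particular'' clause coincide with the paper's argument, the only cosmetic difference being that the paper evaluates at the $\eps$-quantile of $B_n$ rather than arguing by contradiction with $t\uparrow b_{n+m}-M$. One tiny point: your strict pointwise inequality $(1-p)^{Z_m(M)}<(1-\eps)^{Z_m(M)}$ fails on $\{Z_m(M)=0\}$, but since the hypothesis $\EEE\{(1-\eps)^{Z_m(M)}\}\le\tfrac12$ forces $\PPP\{Z_m(M)\ge 1\}>0$, the strict inequality in expectation (and hence your contradiction) still holds.
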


\begin{proof}
Let $q$ be the $\eps$-quantile of $B_n$,
that is, $\PPP \{ B_n \le q \} = \eps$.  By Lemma \ref{tight1},
\[
\PPP \{ B_{m+n} \ge M+q \}
\le \EEE \left[ \( \PPP \{ B_n \ge q \} \)^{Z_m(M)} \right] \le \frac12.
\]
Therefore, $M+q \ge b_{m+n}$, and thus $\PPP \{ B_n \le b_{m+n} - M\} \le
\PPP \{ B_n \le q \} = \eps$.  To prove the second part,
assume that $\PPP \{ Z_m(M) < 1/\eps \} \le \frac15$.
Then
\[
\EEE \left\{ (1-\eps)^{Z_m(M)} \right\} \le \PPP 
\{ Z_m(m) < \tfrac{1}{\eps} \} + (1-\PPP\{Z_m(M)<\tfrac{1}{\eps}\})
(1-\eps)^{1/\eps}  \le \frac15 + \frac{4}{5\er} < \frac12.
\qedhere
\]
\end{proof}

\begin{lem}\label{Z1}
For real $t\ge 1$ and integer $k\ge 1$, we have
$\PPP \{ Z_1(t) \ge k \} \le (\er t/k)^{k-1}.$
\end{lem}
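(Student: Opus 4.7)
The plan is to dominate $Z_1(t)$ by one plus the count of a rate-$1$ Poisson process, after which everything is a routine Poisson tail estimate. Using the Donnelly--Grimmett stick-breaking \eqref{Un}, the generation-$1$ positions of the BRW are $\{-\log y_j\}_{j\ge 1}$ with $y_j=(1-U_1)\cdots(1-U_{j-1})U_j$ and $U_i$ iid uniform on $[0,1]$. Set $W_i = -\log(1-U_i)$, which is $\text{Exp}(1)$, and $T_j = W_1+\cdots+W_j$, so that $T_1<T_2<\cdots$ are the arrival times of a rate-$1$ Poisson point process on $(0,\infty)$. Since $-\log y_j = T_{j-1} + (-\log U_j)\ge T_{j-1}$, any index $j$ with $-\log y_j\le t$ must satisfy $T_{j-1}\le t$, which forces $j\le N(t)+1$, where $N(t) = |\{j\ge 1 : T_j\le t\}|\sim\text{Poisson}(t)$.

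This gives the deterministic domination $Z_1(t)\le N(t)+1$. Combining it with the standard factorial-moment identity $\EEE\binom{N(t)}{k-1} = t^{k-1}/(k-1)!$ and Markov's inequality applied to $\binom{N(t)}{k-1}\ge \mathbf 1\{N(t)\ge k-1\}$, one obtains
\[
\PPP\{Z_1(t)\ge k\} \;\le\; \PPP\{N(t)\ge k-1\} \;\le\; \frac{t^{k-1}}{(k-1)!}.
\]
It then remains to show $k^{k-1}\le e^{k-1}(k-1)!$ for every $k\ge 1$, which converts the right-hand side into the claimed $(\er t/k)^{k-1}$. This inequality is trivial at $k=1$; for the inductive step one uses $(1+1/k)^k\le \er$ to write $(k+1)^k/k^{k-1} = k(1+1/k)^k\le \er k$, and multiplication by the inductive hypothesis yields $(k+1)^k\le \er k\cdot \er^{k-1}(k-1)! = \er^k\cdot k!$.

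There is no real obstacle in this argument; the only substantive step is recognizing that the stick-breaking encodes a genuine Poisson process so that $Z_1(t)\le N(t)+1$ holds deterministically. Once this reduction is in place, both the factorial-moment estimate and the elementary inequality $k^{k-1}\le \er^{k-1}(k-1)!$ are routine, and one notes that the hypothesis $t\ge 1$ is not actually needed in this approach (though it is consistent with the parameter range in which the lemma is applied).
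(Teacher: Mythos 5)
Your proof is correct. In substance the first reduction is the same as the paper's: the paper bounds $\PPP\{Z_1(t)\ge k\}$ by $\PPP\{(1-U_1)\cdots(1-U_{k-1})\ge \er^{-t}\}$, which is exactly your event $\{T_{k-1}\le t\}=\{N(t)\ge k-1\}$ in the exponential/Poisson-process language; your domination $Z_1(t)\le N(t)+1$ is just this observation phrased as a deterministic inequality (and rearrangement-invariance of the count, which you correctly rely on). Where you genuinely diverge is in how the tail of this event is estimated: the paper applies a Chernoff bound with the optimal exponent $s=k/t-1$, getting $\er^{st}/(1+s)^{k-1}=\er^{1-t}(\er t/k)^{k-1}$ and invoking $t\ge 1$ at the last step, whereas you use the Poisson factorial-moment identity $\EEE\binom{N(t)}{k-1}=t^{k-1}/(k-1)!$ together with Markov and the elementary inequality $k^{k-1}\le \er^{k-1}(k-1)!$ (your induction for which is fine). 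The trade-off: your route needs no hypothesis on $t$ and no optimization over a Chernoff parameter, while the paper's exponential-moment bound is marginally sharper for large $t$ because of the extra factor $\er^{1-t}$; both comfortably yield the stated bound $(\er t/k)^{k-1}$, which is all that is used later (in Lemma \ref{Zrklarge} and Theorem \ref{tails}).
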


\begin{proof}
The conclusion is trivial if $k\le \er t$, so we suppose $k>\er t$.
Take  $s = \frac{k}{t} - 1$.  By \eqref{Un},
\begin{align*}
\PPP \{ Z_1(t)\ge k \} &\le \PPP \left\{ (1-U_1)\cdots (1-U_{k-1}) \ge 
\er^{-t} \right\} \\
&\le \er^{st} \int_0^1 \!\! \cdots \int_0^1 [(1-u_1) \cdots (1-u_{k-1})]^s
d u_1 \cdots d u_{k-1} 
= \frac{\er^{st}}{(1+s)^{k-1}}.
\qedhere
\end{align*}
\end{proof}

\begin{lem}\label{Zrklarge}
For all $r\ge 1$, $\theta>1$ and $\eps>0$, if $x$ is large then
$\PPP \{ Z_r(x) \ge \theta^x \} \le \exp \{ - (\theta-\eps)^x \}.$
\end{lem}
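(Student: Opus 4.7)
The plan is to induct on $r$, using Lemma \ref{Z1} as the base case. For $r=1$, the bound from Lemma \ref{Z1} gives $\PPP\{Z_1(x)\ge\theta^x\}\le(\er x/\theta^x)^{\theta^x-1}$, whose logarithm is $\sim -x\theta^x\log\theta$ for $\theta>1$, which is much less than $-(\theta-\eps)^x$ for large $x$ and any $\eps>0$.

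For the inductive step, I would exploit the branching decomposition
\[
Z_r(x) \;=\; \sum_{i:\,z_i\le x} Z_{r-1}^{(i)}(x-z_i),
\]
where $\{z_i\}$ is the first-generation LPD point set and the $Z_{r-1}^{(i)}$ are conditionally i.i.d.\ copies of $Z_{r-1}$.  Fix $\theta_0,\theta_1>1$ with $\theta_0\theta_1=\theta$ (to be chosen) and split
\[
\PPP\{Z_r(x)\ge\theta^x\}\le \PPP\{Z_1(x)\ge\theta_0^x\}+\PPP\{Z_1(x)<\theta_0^x,\ Z_r(x)\ge\theta^x\}.
\]
The first probability is bounded by Lemma \ref{Z1} as roughly $\exp(-x\theta_0^x\log\theta_0)$.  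On the event in the second term, pigeonhole forces some $i$ with $Z_{r-1}^{(i)}(x-z_i)\ge\theta^x/\theta_0^x=\theta_1^x$; since $Z_{r-1}^{(i)}$ is monotone, $Z_{r-1}^{(i)}(x)\ge\theta_1^x$.  A union bound over the $< \theta_0^x$ first-generation indices combined with the inductive hypothesis (applied with parameter $\theta_1$ and an arbitrarily small $\eps'>0$) gives the bound $\theta_0^x\exp(-(\theta_1-\eps')^x)$.

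Requiring both pieces to be at most $\exp(-(\theta-\eps)^x)$ leads to $\theta_0\ge\theta-\eps$ (from Lemma \ref{Z1}) and $\theta_1>\theta-\eps$ (from the subtree step), and with $\theta_0\theta_1=\theta$ this forces $(\theta-\eps)^2<\theta$.  The hard part is extending the argument to arbitrarily small $\eps>0$.  Two refinements are available: (a) a bootstrap in which the output of one pass of the induction is fed back as a stronger hypothesis, widening the admissible range of $\theta_1$ at each iteration; and (b) a finer union bound that keeps the dependence on $y_i:=x-z_i$, replacing the crude $Z_{r-1}^{(i)}(x)\ge\theta_1^x$ by $Z_{r-1}^{(i)}(y_i)\ge\theta_1^{y_i}$ and integrating the resulting doubly-exponential bound $\exp(-(\theta_1-\eps')^{y_i})$ against the first-generation intensity measure $dz$ (which is Lebesgue, via $\EEE\sum_i\er^{-sz_i}=1/s$ as shown in the excerpt).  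The main obstacle will be tracking the parameter balance under these refinements to verify that the admissible $\eps$ can indeed be pushed arbitrarily close to $0$; this relies on the sharp form of Lemma \ref{Z1} (much stronger than $\exp(-\theta_0^x)$) to afford the necessary slack, and on the rapid decay of $\exp(-(\theta_1-\eps')^{y})$ in $y$, which makes the integrated subtree contribution concentrate near $y=0$ and can be absorbed into the Lemma \ref{Z1} bound.
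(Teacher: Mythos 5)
Your base case is fine, but the inductive step has a genuine gap, and you have in effect flagged it yourself: the pigeonhole step that isolates a \emph{single} heavy subtree cannot reach small $\eps$. Once you demand $\theta_0\ge\theta-\eps$ so that Lemma \ref{Z1} handles the event $\{Z_1(x)\ge\theta_0^x\}$, the pigeonholed subtree only carries $\theta_1^x$ points with $\theta_1=\theta/\theta_0\le\theta/(\theta-\eps)=1+O(\eps)$, so the best the inductive hypothesis can ever give for that one subtree is $\exp\{-(\theta_1-\eps')^x\}\approx\exp\{-(1+O(\eps))^x\}$, hopelessly far above the target $\exp\{-(\theta-\eps)^x\}$. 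Neither proposed repair closes this. The bootstrap (a) does not change the structural constraint $\theta_0\theta_1=\theta$, which recurs identically at every pass. Refinement (b) gains nothing beyond what you already wrote: via monotonicity you are back to $\PPP\{Z_{r-1}(x)\ge\theta_1^x\}$, and to do better you would need a bound that decays in the \emph{threshold} itself (something like $\PPP\{Z_{r-1}(y)\ge T\}\le\exp(-cT)$ uniformly in $y$), which is strictly stronger than the statement being induced on; moreover the hypothesis is only asserted ``for $x$ large'' at fixed $\theta,\eps$, so applying it with $x$-dependent parameters $\theta_1^{x/y_i}$ or at small arguments $y_i$ is not licensed.

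The paper avoids the single-subtree bottleneck by never pigeonholing. It conditions on $Z_r(x)=j<(\theta-\eps/3)^x$ (the complementary event is handled by the inductive hypothesis at the slightly reduced parameter), and then treats the last generation step directly: if $Z_{r+1}(x)\ge\theta^x$, the child counts $m_1,\ldots,m_j$ (children offset at most $x$ from their generation-$r$ parents) satisfy $m_1+\cdots+m_j\ge\theta^x$, and for any \emph{fixed} vector of counts the independence across parents gives
\[
\prod_{i=1}^{j}\PPP\{Z_1(x)\ge m_i\}\le\prod_{m_i\ge 100x}\er^{-2m_i}\le\er^{-2(\theta^x-100xj)},
\]
i.e.\ doubly exponential decay proportional to the \emph{total} $\theta^x$, not to the size of one subtree. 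This beats the entropy $\exp\{(\theta-\eps/4)^x\}$ of possible vectors $(j,m_1,\ldots,m_j)$ (each $m_i\le\er^x$), and the union bound finishes the step. That multiplicativity across all generation-$r$ points is exactly what your pigeonhole forfeits, and it is the missing idea you would need to supply.
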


\begin{proof}
When $r=1$, this follows from Lemma \ref{Z1}.  Assume it to be true for some $r\ge 1$,
let $\theta$ and $\eps$ be given, and assume without loss of generality that
$\theta-\eps>1$.  The probability that $Z_r(x) \ge (\theta-\eps/3)^x$ is
$\le \exp \{-(\theta-\eps/2)^x\}$ for large $x$.  Now suppose 
$Z_r(x) = j <  (\theta-\eps/3)^x$ and $Z_{r+1}(x) \ge \theta^x$.
Let $m_i$ be the number of children of the $i$-th largest point in
 $\zz^{(r)}$ which are offset at most $x$ from their parent.  Let $\mathcal{I}$
be the set of indices with $m_i \ge 100x$.  
Note  that $m_1+\cdots+m_j\ge \theta^x$.
With $j, m_1, \ldots, m_j$ fixed, by Lemma \ref{Z1}
 the probability that $Z_{r+1}(x) \ge \theta^x$ is at most
$$
\prod_{i=1}^j \PPP \{ Z_1(x) \ge m_i \} \le \prod_{i\in\mathcal{I}} \er^{-2m_i}
\le \er^{-2(\th^x-100xj)} \le \exp \left\{ - \theta^x \right\}.
$$
As $m_i \le \er^x$,
the number of choices for $j,m_1,\ldots,m_j$ is at most
$\exp \{ (\theta-\eps/4)^x \}.$
For large $x$,
\[
\PPP \{ Z_r(x) \ge \theta^x \} \le \exp\{-(\theta-\eps/2)^x\} +
\exp\{  (\theta-\eps/4)^x - \theta^x \} \le \exp \{-(\theta-\eps)^x\}.
\qedhere
\]
\end{proof}

\begin{proof}[Proof of Theorem \ref{tails}]
Let $a>1/\er$ and $0<\eta<a\er/2$.
By \cite[Theorem 2]{BigChe}, for large $r$ we have
$\PPP \{ Z_r(ar) \le (a\er-\eta)^r \} \le \frac15$.
Let $r$ be so large that, in addition, $b_{n+r}\ge b_n+(1/\er-\eta)r$
for all $n$ ($r$ exists by Theorem \ref{bnsharp}).
Apply Lemma \ref{tight2} with $M=ar$, $m=r$,
$\eps=(a\er-\eta)^{-r}$.  For large integers $r$,
$$
\PPP \{ B_n \le b_n-(a-1/\er+\eta)r \} \le 
\PPP \{ B_n \le b_{n+r} - ar \} \le (a\er-\eta)^{-r}.
$$
The first estimate follows with $c_1=\frac{\log (a\er-\eta)}{(a-1/\er+\eta)}$.
Fix $a$, let $\eta\to 0$, then let $a\to 1/\er$, 
so that $c_1\to \er$.

For the second part of (a), take $\eta>0$ and $r$ as before (but fixed here),
 and let
$\del=(1/\er-\eta)r$, so that $b_{n+r}\ge b_n+\del$ for all $n$.
Since $\rho(u)=1-\log u$ for $1\le u\le 2$,
$\PPP(Z_1(\eps)\ge 1) = 1-\rho(\er^{\eps}) = \eps$ when $0\le \eps \le \log 2$.
Considering the ``leftmost child of the leftmost child of the \ldots
of the initial ancestor'' in the branching random
walk, we have
$\PPP \{ B_{kr} \le \del k/2 \} \ge \PPP \{ Z_1(\del/2r) \ge 1\}^{kr} 
= (\del/2r)^{kr}$ for every $k\ge 1$.  Hence,
$$
\PPP \{ B_n \le b_{n-kr} + \del k/2 \} \ge \PPP \{ B_{n-kr} \le b_{n-kr} \}
\PPP \{ B_{kr} \le \del k/2 \} \ge \frac12 \pfrac{\del}{2r}^{kr}.
$$
By assumption, $b_{n-kr}+\del k/2 \le b_n - \del k/2$.  Hence,
for $0\le k\le n/r$ we have
$$
\PPP \{ B_n \le b_n - \del k /2 \} \ge \frac12 \pfrac{\del}{2r}^{kr}.
$$
This gives the desired bound when $0\le x \le \frac{\delta n}{2r}$,
with $c_1'=\frac{2r}{\del} \log \frac{2r}{\delta}$.

To show part (b), we use induction on $n$ to show that
\be\label{tailsb} 
\PPP \{ B_n \ge b_n + x \} \le 2^{-\exp \{ c_3(x-c_5) \}}  
\ee 
for $n\ge 1$ and $x\ge 0$, where $c_5$ is sufficiently large.
Theorem \ref{tails} (c) then
follows with $c_4=c_5 - \frac{\log\log 2}{c_3}$.  As
\eqref{tailsb} is trivial for $0\le x \le c_5$, we may assume
$x\ge c_5$.   Let $r,\del$ be such that
$b_{n+r}-b_n \ge \del>0$ for all $n$ (the relationship between $r$ and
$\del$ is unimportant).
Let $A$ be a large integer, so that if $R=Ar$ and $\Delta=A\del$, then
$2\er^{2-\Delta} \le 1-c_3$.
Also suppose $c_3 \ge \frac12$.
For $1\le n\le R$, \eqref{Lamtail} implies
$\PPP \{ B_n \ge b_n+x \} \le \PPP \{ B_n \ge x \} = \rho_n(\er^x)
\le \exp \{ -\er^x \}$
if $c_5$ is large enough.  Suppose now that \eqref{tailsb} has been
proved for $1\le n\le m-1$, where $m-1\ge R$.
Define $\lam_j = \Delta + \frac{\log j - 1}{c_3}$ for $j\ge 1$
Let $j_0$ be the largest index $j$ with $\lam_j \le x+\Delta$.  
Let $z_1 \le z_2 \le \cdots$ be the points in $\zz^{(R)}$.  
For $1\le j\le j_0$, let $P_j$ be the event 
$\{z_i>\lam_i \; (i<j), z_j\le\lam_j\}$, and the $Q$ be the event
$\{z_i>\lam_i \; (1\le i\le j_0)\}$.  If $P_j$, then the generation $m$ points
descending from each of the $j$ points $z_1,\ldots,z_j$ are offset from their
generation $R$ ancestor by at least $b_m+x-\lam_j$. So
\[
\PPP \{ B_m \ge b_m + x \} \le \sum_{j=1}^{j_0} \PPP [P_j] \PPP \{ B_{m-R}
\ge b_m+x-\lam_j \}^j + \PPP [ Q ].
\]
Since $b_m \ge b_{m-R}+\Delta$, the induction hypothesis implies that the sum
on $j$ is
\[
\le  \sum_{j=1}^{j_0} \PPP [P_j] 2^{-j \exp\{c_3(x-c_5+\Delta-\lam_j)\}}
\le  \sum_{j=1}^{j_0} \PPP [P_j] 2^{-\exp\{c_3(x-c_5)+1\}}
\le 2^{-1-\exp\{c_3(x-c_5)\}}.
\]
Now suppose that $Q$ holds.  By the assumption on $A$,
$$
\sum_{j\le j_0} \er^{-z_j} \le \sum_{j=1}^{j_0} \er^{-\lam_j} \le
\er^{-\Delta+1/c_3} \sum_{j=1}^\infty j^{-1/c_3} \le \frac12.
$$
As $\lam_{j_0} \ge x+\Delta - (\lam_{j_0+1}-\lam_{j_0}) \ge x$,
$$
\sum_{\substack{z\in\zz^{(R)} \\ z\ge x }} \er^{-z} = 1 - 
\sum_{\substack{z\in\zz^{(R)} \\ z< x }} \er^{-z} \ge \frac12.
$$
Let $\eps=\frac13(\er-\er^{c_3})$ and $\th=\er-\eps$, so that $\er^{c_3}
<\theta-\eps < \theta < \er$.
For some integer $k\ge x$, there are $\ge \theta^k$ points of $\zz^{(R)}$ in
$[k-1,k)$, for otherwise
$$
\sum_{\substack{z\in\zz^{(R)} \\ z\ge x }} \er^{-z} \le \sum_{k\ge x}
\er \pfrac{\theta}{\er}^k < \frac12.
$$
By Lemma \ref{Zrklarge}, $\PPP[Q]\le \sum_k\PPP \{ Z_R(k) \ge \theta^k \} \le 
2\er^{ - (\theta-\eps)^x}$. 
This completes the proof of (b).
\end{proof}


\ms
\noindent
{\bf Acknowledgments.}  The authors wish to thank L.
Addario-Berry, J. Bertoin, C. Elsholtz, A. Granville,
A. Hildebrand, I. K\'atai, C. Pomerance,
Y. Sinai, and R. Song for helpful conversations.


\end{document}